\documentclass[a4paper, leqno]{article}
\usepackage[english]{babel}
\usepackage{amsmath,amscd,amssymb,amsthm}
\usepackage{hyperref}
\hypersetup{colorlinks=true,allcolors=blue}
\usepackage{physics}
\usepackage{pb-diagram}
\usepackage{lineno}
\numberwithin{equation}{section}
\usepackage{graphics} 
\usepackage{bm}
\usepackage{tikz} 
\usepackage{verbatim}
\usepackage{centernot,cancel}
\newtheorem{de}{Definition}[section]

\newtheorem{thm}{Theorem}[section]

\newtheorem{lem}{Lemma}[section]

\newtheorem{prop}{Proposition}[section]

\newtheorem{remark}{Remark}[section]
\newtheorem*{remark*}{Remark}
\newcommand{\be}{\begin{equation}}
\newcommand{\ee}{\end{equation}}
\newcommand{\bpr}{\begin{proof}}
\newcommand{\epr}{\end{proof}}
\newcommand{\baln}{\begin{align*}}
\newcommand{\ealn}{\end{align*}}

\begin{document}
\title{Manton's Vortex Equations and Anti-Self-Dual Connections}
\author{Takashi Ono\thanks{Research Institute for Mathematical Sciences, Kyoto University, Kyoto, Japan, takashio@kurims.kyoto-u.ac.jp}}
\date{}
\maketitle
\begin{abstract}
We study four of Manton's five vortex equations on a Riemann surface $X$ in relation to anti-self-dual connections.
From a solution of each vortex equation, we construct an equivariant holomorphic bundle over $X\times\mathbb{P}^1$, $X\times\Delta$, or $X\times\mathbb{C}$, together with an invariant Hermitian or pseudo-Hermitian metric whose associated connection is anti-self-dual.
The equivariant structures are given by the actions of $SU(2)$, $SU(1,1)$, and $SE(2)$, respectively.
Conversely, we show that an equivariant holomorphic bundle equipped with an invariant (pseudo-)Hermitian metric gives rise to a solution of the corresponding vortex equation when the associated connection is anti-self-dual.
\end{abstract}
\noindent
Keywords: Vortex Equation, Anti-Self-Dual Connection, Dimensional Reduction\\
2020 MSC: 53C07, 14D21, 58D27
\section*{Introduction}
In this paper, we study Manton's vortex equations in relation to anti-self-dual (ASD) connections.
We first recall Manton's five vortex equations.
\par
Let $X$ be a Riemann surface with a K\"ahler form $\omega_X$, and let $(L,\bar\partial_L)$ be a holomorphic line bundle over $X$.
Suppose that $\phi$ is a non-trivial holomorphic section of $(L,\bar\partial_L)$, and $h_L$ is a Hermitian metric on $L$.
We denote by $\nabla_{h_L}$ the Chern connection associated with $\bar\partial_L$ and $h_L$, and denote by $F_{h_L}$ the curvature of $\nabla_{h_L}$.
Let $\tau>0$ be a constant.
We say that the triple $((L,\bar\partial_L),\phi,h_L)$ satisfies the $\tau$-vortex equation if
\[
\sqrt{-1}\Lambda_{\omega_X}F_{h_L}+|\phi|^2_{h_L}-\tau=0.
\]
Here $\Lambda_{\omega_X}$ is the contraction operator with respect to $\omega_X$.
In \cite{Ma}, Manton deformed the vortex equation by changing the signs of the coefficients.
More precisely, he introduced the family of equations
\be\label{eq 0.1}
\sqrt{-1}\Lambda_{\omega_X}F_{h_L}+C_0|\phi|^2_{h_L}-C_1\tau=0,\quad C_0,C_1\in\{-1,0,1\}.
\ee
Note that we still assume $\tau>0$.
Although there are nine possible pairs $(C_0,C_1)$, only the following five pairs
\be\label{eq 0.2}
(C_0,C_1)=(1,1),(-1,1),(-1,0),(-1,-1),(0,1)
\ee
can occur.
We call the equations \eqref{eq 0.1} for the five pairs in \eqref{eq 0.2} Manton's five vortex equations.
When we wish to emphasize the parameter $\tau$, we call them Manton's five $\tau$-vortex equations.
\begin{remark*}
We briefly recall the terminology and some references for Manton's five vortex equations.
The case $(C_0,C_1)=(1,1)$, known as the Taubes vortex equation, was introduced by Ginzburg and Landau \cite{GL} and has since been extensively studied; see, for example, \cite{JT, Br1, GP1}.
The case $(C_0,C_1)=(-1,1)$ is known as the Ambj{\o}rn-Olesen vortex equation and has been studied in \cite{AmOl1, AmOl2, Tdl, Ono}.
The case $(C_0,C_1)=(-1,0)$ is known as the Jackiw-Pi vortex equation and has been studied in \cite{JaPi1, JaPi2, ACSH, MiNa}.
The case $(C_0,C_1)=(-1,-1)$ is known as the Popov vortex equation and has been studied in \cite{Po1, Po2}.
Finally, the case $(C_0,C_1)=(0,1)$ is the Hermitian-Einstein equation, which has been extensively studied; see, for example, \cite{Ko}.
Manton's five vortex equations are also studied from the viewpoint of Cartan geometry \cite{Ro}.
\end{remark*}
The relation between vortex equations and ASD connections through dimensional reduction has been known for a long time.
For Manton's five vortex equation with $(C_0,C_1)=(1,1)$, the global dimensional reduction of ASD connections to vortex solutions was established by Taubes \cite{Tau} when $X=\mathbb{R}^2$ and by Manton and Rink \cite{MaRi} for a general Riemann surface $X$.
For the other Manton's five vortex equations, related dimensional reductions have also been studied, in particular from a local perspective. 
 \par
 In the opposite direction, however, except in some special cases, the global construction of ASD connections from vortex solutions has been less clear: given a vortex solution on a general Riemann surface $X$, it has not been clear in general how to construct the corresponding bundle and ASD connection over the corresponding four-manifold.
 \par
 In this paper, we study the relationship between solutions of four of Manton's five vortex equations, excluding the case $(C_0,C_1)=(0,1)$, and ASD connections on certain holomorphic bundles over $X\times Y$.
More precisely, we establish constructions in both directions between solutions of four of Manton's five vortex equations on $X$ and ASD connections arising from holomorphic bundles over $X\times Y$ with suitable symmetries and (pseudo-)Hermitian metrics.
Here, the surface $Y$ and the symmetry group $G$ depend on the value of $C_1$: when $C_1=1$, we take
\[
Y=\mathbb{P}^1,\qquad G=SU(2);
\]
when $C_1=0$, we take
\[
Y=\mathbb{C},\qquad G=SE(2)=\mathbb{C}\rtimes U(1);
\]
and when $C_1=-1$, we take
\[
Y=\Delta:=\{z\in\mathbb{C}\mid |z|<1\},\qquad G=SU(1,1).
\]
We first explain how to construct ASD connections on $X\times Y$ from solutions of Manton's vortex equations on $X$.
From now on, we assume that the holomorphic line bundle $(L,\bar\partial_L)$ admits a square root $(L^{1/2},\bar\partial_{L^{1/2}})$.
Suppose that the triple $((L,\bar\partial_L),\phi,h_L)$ satisfies one of Manton's five vortex equations considered in this paper.
Then, from the triple, we can construct a $G$-equivariant holomorphic bundle over $X\times Y$ and a $G$-invariant (pseudo-)Hermitian metric on it.
We then show that the unique connection associated with the holomorphic structure and the (pseudo-)Hermitian metric is an ASD connection with respect to a suitable K\"ahler form on $X\times Y$.
In the cases $(C_0,C_1)=(\pm1,1)$, we show the following.
We note that when $X$ is compact, we can also compute the instanton number in these cases.

\begin{thm}[Theorem \ref{thm 2.1}]
Let $(X,\omega_X)$, $(L,\bar\partial_L)$, $\phi$, and $h_L$ be as above.
\par
From $(L,\bar\partial_L)$ and $\phi$, we construct an $SU(2)$-equivariant holomorphic bundle $(E,\bar\partial_E)$ on $X\times \mathbb{P}^1$ as follows:
\begin{align*}
E:&=p^\ast L^{1/2}\otimes q^\ast \mathcal{O}(-1)\oplus p^\ast L^{-1/2}\otimes q^\ast\mathcal{O}(1),\\
\bar\partial_E:&=
\begin{pmatrix}
p^\ast\bar\partial_{L^{1/2}}\otimes \mathrm{Id}+\mathrm{Id}\otimes q^\ast\bar\partial_{\mathcal{O}(-1)}&\sqrt{\frac1\tau}\cdot p^\ast\phi\otimes q^\ast\alpha\\
0&p^\ast\bar\partial_{L^{-1/2}}\otimes \mathrm{Id}+\mathrm{Id}\otimes q^\ast\bar\partial_{\mathcal{O}(1)}
\end{pmatrix}.
\end{align*}
Here $p:X\times\mathbb{P}^1\to X$ and $q:X\times\mathbb{P}^1\to \mathbb{P}^1$ are the projections.
See Section \ref{sec 2.1} for the definition of $\alpha$.
\begin{itemize}
\item[(1)]
Suppose that the triple $((L,\bar\partial_L),\phi,h_L)$ satisfies Manton's five $\tau$-vortex equations with $(C_0,C_1)=(1,1)$.
Then, the Chern connection $\nabla_{h_E}$ associated with $\bar\partial_E$ and the Hermitian metric 
\[
h_E:=\begin{pmatrix}
p^\ast h^{1/2}_L\otimes q^\ast h^{(-1)}&0\\
0&p^\ast h^{-1/2}_L\otimes q^\ast h^{(1)}
\end{pmatrix}
\]
is an ASD connection with respect to the K\"ahler form $\Omega_\tau:=\bigg(\frac{\tau}{4\pi}p^\ast\omega_X\bigg)\oplus q^\ast\omega_{\mathrm{FS}}$.
Here, $\omega_{\mathrm{FS}}$ is the Fubini-Study K\"ahler form on $\mathbb{P}^1$.
\item[(2)] Suppose that the triple $((L,\bar\partial_L),\phi,h_L)$ satisfies Manton's five $\tau$-vortex equations with $(C_0,C_1)=(-1,1)$.
Then, the unique connection whose $(0,1)$-part is $\bar\partial_E$ and which is compatible with the pseudo-Hermitian metric of signature $(1,1)$
\[
h_E:=\begin{pmatrix}
p^\ast h^{1/2}_L\otimes q^\ast h^{(-1)}&0\\
0&-p^\ast h^{-1/2}_L\otimes q^\ast h^{(1)}
\end{pmatrix}
\]
is an ASD connection with respect to the K\"ahler form $\Omega_\tau$.
\end{itemize}
If $X$ is compact, the instanton number of $E$ is $\mathrm{deg}L$ in both cases.
\end{thm}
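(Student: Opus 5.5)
The plan is to compute the curvature of the connection $\nabla$ determined by $\bar\partial_E$ and $h_E$ directly. I will treat it as a perturbation of the diagonal Chern connection by the second fundamental form of the extension, and then check the ASD condition. On a Kähler surface this condition says that $F$ is of type $(1,1)$ and that $\Lambda_{\Omega_\tau}F=0$. The $(1,1)$ part is automatic: $\nabla$ is the unique connection compatible with a (pseudo-)Hermitian metric whose $(0,1)$-part is the integrable operator $\bar\partial_E$, so $F^{0,2}=\bar\partial_E^2=0$. Compatibility with $h_E$ then forces $F^{2,0}=0$ as well, and this argument works equally in the indefinite case. Hence everything reduces to $\Lambda_{\Omega_\tau}F=0$.

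Write $\beta:=\sqrt{1/\tau}\,p^\ast\phi\otimes q^\ast\alpha$. I take $\alpha$ to be the $SU(2)$-invariant $(0,1)$-form with values in $\mathcal{O}(-2)=\mathrm{Hom}(\mathcal{O}(1),\mathcal{O}(-1))$ representing the generator of $H^{0,1}(\mathbb{P}^1,\mathcal{O}(-2))$, normalized in Section \ref{sec 2.1} so that $\sqrt{-1}\Lambda_{\omega_{\mathrm{FS}}}(\alpha\wedge\alpha^{\ast})$ is a fixed constant. With respect to the diagonal metric $\mathrm{diag}(h_1,\epsilon h_2)$, where $\epsilon=\pm1$, the standard formula for the connection of an extension gives
\[
\nabla=\begin{pmatrix}\nabla_1&\beta\\ -\epsilon\beta^{\ast}&\nabla_2\end{pmatrix},\qquad
F=\begin{pmatrix}F_1-\epsilon\beta\wedge\beta^\ast&\nabla\beta\\ -\epsilon\nabla\beta^\ast&F_2-\epsilon\beta^\ast\wedge\beta\end{pmatrix},
\]
where $\beta^\ast$ is the adjoint with respect to the positive metrics $h_1,h_2$. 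The sign $-\epsilon$ is exactly what distinguishes case (1) from case (2).

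Next I contract with $\Omega_\tau$. For the diagonal terms, $F_1=p^\ast F_{h_L}/2+q^\ast F_{h^{(-1)}}$, and contracting gives $\frac{4\pi}{\tau}\cdot\frac12\Lambda_{\omega_X}F_{h_L}$ plus the constant $\Lambda_{\omega_{\mathrm{FS}}}F_{h^{(-1)}}$, which equals $\pm 2\pi\sqrt{-1}$ up to the normalization of $\omega_{\mathrm{FS}}$. The term $\beta\wedge\beta^\ast$ is of mixed type $(0,1)\wedge(1,0)$ purely in the $\mathbb{P}^1$ direction, so its contraction is $\frac1\tau|\phi|^2_{h_L}$ times the fixed constant attached to $\alpha$. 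Putting these together, $\sqrt{-1}\Lambda F_{11}$ becomes a positive multiple of $\sqrt{-1}\Lambda_{\omega_X}F_{h_L}+\epsilon|\phi|^2-\tau$, where the normalization of $\alpha$ is chosen so that the coefficients match. This is precisely the vortex equation with $C_0=\epsilon$ and $C_1=1$. The $(2,2)$ entry gives the negative of the same expression.

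For the off-diagonal terms, $\nabla\beta$ splits into $\sqrt{1/\tau}\,\nabla_X\phi\wedge\alpha$ and $\phi\,\nabla_{\mathbb{P}^1}\alpha$. Since $\bar\partial\phi=0$, the first contributes only $\partial\phi\wedge\alpha$, which is a mixed $(1,0)_X\wedge(0,1)_Y$ form and is therefore killed by the product Kähler form. The second contracts to $\phi\,\partial^\ast$-type expressions in $\alpha$, which vanish because $\alpha$ is harmonic ($\bar\partial^\ast\alpha=0$). Hence $\Lambda F$ is off-diagonal-free.

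I expect the main obstacle to be bookkeeping the constants: the factor $\tau/4\pi$, the curvature of $\mathcal{O}(\pm1)$, and the norm of $\alpha$. These must match so that the vortex equation, and not merely some rescaled version of it, appears; the normalization in Section \ref{sec 2.1} is meant to do exactly this. Finally, the instanton number is $c_2(E)-\tfrac12 c_1(E)^2=c_2(E)$, since $c_1(E)=0$. It equals $-(p^\ast c_1(L)/2-q^\ast H)(p^\ast c_1(L)/2+q^\ast H)$, and after integrating over $X\times\mathbb{P}^1$ the cross terms give $\deg L$ up to the sign convention, in both cases.
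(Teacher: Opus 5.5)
Your curvature computation takes the same route as the paper. The block form of $\nabla$ with lower-left entry $-\epsilon\beta^{\ast}$ is exactly Lemma~\ref{lem 2.2}: a direct check gives $\alpha^{\ast}=dz\otimes e_{2,z}$, so your $\beta^\ast$ is the paper's $\sqrt{1/\tau}\,h_L\bar\phi\otimes\beta$. The diagonal blocks give the vortex expression, and the off-diagonal blocks vanish under $\Lambda_{\Omega_\tau}$ once $\partial_{h^{(-2)}}\alpha=0$. Your harmonicity argument for the last point is fine, but you should justify $\bar\partial^\ast\alpha=0$. It holds because $\bar\partial^\ast\alpha$ is an $SU(2)$-invariant section of $\mathcal{O}(-2)$, hence zero by part (1) of the proposition in Section~\ref{sec 2.1}.

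The constants, however, are not something you may ``choose''. The form $\alpha$, the factor $\sqrt{1/\tau}$ and $\Omega_\tau$ are all fixed in the statement. The content of the diagonal computation is the identity
\[
\alpha\wedge\alpha^{\ast}=\frac{d\bar z\wedge dz}{(1+|z|^2)^2}=\bar\partial\partial\log(1+|z|^2)=F_{h^{(-1)}}=2\pi\sqrt{-1}\,\omega_{\mathrm{FS}}.
\]
This gives $F_{11}=\tfrac12p^\ast F_{h_L}+\big(1-\epsilon\tau^{-1}|\phi|^2_{h_L}\big)q^\ast F_{h^{(-1)}}$, and hence
\[
\sqrt{-1}\Lambda_{\Omega_\tau}F_{11}=\frac{2\pi}{\tau}\big(\sqrt{-1}\Lambda_{\omega_X}F_{h_L}+\epsilon|\phi|^2_{h_L}-\tau\big).
\]
In particular, $\Lambda_{\omega_{\mathrm{FS}}}F_{h^{(-1)}}$ is not ``$\pm 2\pi\sqrt{-1}$''. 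It must be $+2\pi\sqrt{-1}$, which is forced by invariance together with $\deg\mathcal{O}(-1)=-1$; the other sign would produce $+\tau$ and the wrong equation. You should carry out this check rather than defer it.

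The instanton-number step, as written, is wrong. The second summand $p^\ast L^{-1/2}\otimes q^\ast\mathcal{O}(1)$ has $c_1=-\tfrac12p^\ast c_1(L)+q^\ast H$. Therefore $c_2(E)=-\big(\tfrac12p^\ast c_1(L)-q^\ast H\big)^2$, not $-\big(\tfrac12p^\ast c_1(L)-q^\ast H\big)\big(\tfrac12p^\ast c_1(L)+q^\ast H\big)$. Your product equals $-\tfrac14p^\ast c_1(L)^2+q^\ast H^2$, which is $0$ because both squares vanish. So its cross terms cancel rather than give $\deg L$. With the correct factor, $c_2(E)=p^\ast c_1(L)\cdot q^\ast H$, and $\int_{X\times\mathbb{P}^1}c_2(E)=\deg L$ exactly, with no sign ambiguity. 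Once corrected, this cohomological computation is a valid shortcut for Lemma~\ref{lem 2.3}, where the paper reaches the same number by Chern--Weil theory using the diagonal product connection.
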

For the cases $(C_0,C_1)=(-1,-1)$ and $(C_0,C_1)=(-1,0)$, we obtain similar results. However, since the full constructions are rather lengthy, we only outline them here.
We refer to the corresponding theorems for the precise statements.
\begin{thm}[Theorem \ref{thm 3.1}, \ref{thm 4.1}]
Let $(X,\omega_X)$, $(L,\bar\partial_L)$, $\phi$, $h_L$, and $\tau$ be as above.
\begin{itemize}
\item[(1)] Suppose that the triple $((L,\bar\partial_L),\phi,h_L)$ satisfies Manton's five $\tau$-vortex equation with $(C_0,C_1)=(-1,-1)$.
Then, we can construct an $SU(1,1)$-equivariant holomorphic bundle $(E,\bar\partial_E)$ over $X\times\Delta$ and  a pseudo-Hermitian metric $h_E$ of signature $(1,1)$ such that 
the unique connection $\nabla_{h_E}$ satisfying $\nabla^{0,1}_{h_E}=\bar\partial_E$ and compatible with $h_E$ is an ASD connection with respect to the K\"ahler form $\Omega_\tau=\big(\tau p^\ast \omega_{X}\big)\oplus q^\ast \omega_{\mathrm{Poi}}$.
Here $\omega_{\mathrm{Poi}}$ is the K\"ahler form on $\Delta$ associated with the Poincar\'e metric on $\Delta$ (See Section \ref{sec 3.1}).
\item[(2)] Suppose that the triple $((L,\bar\partial_L),\phi,h_L)$ satisfies Manton's five vortex equation with $(C_0,C_1)=(-1,0)$.
Then, we can construct an $SE(2)$-equivariant holomorphic bundle $(E,\bar\partial_E)$ over $X\times\mathbb{C}$ and  a pseudo-Hermitian metric $h_E$ of signature $(1,1)$ such that 
the unique connection $\nabla_{h_E}$ satisfying $\nabla^{0,1}_{h_E}=\bar\partial_E$ and compatible with $h_E$ is an ASD connection with respect to the K\"ahler form $\Omega= p^\ast \omega_{X}\oplus q^\ast \omega_{\mathrm{can}}$.
Here $\omega_{\mathrm{can}}$ is the K\"ahler form on $\mathbb{C}$ associated with the canonical Riemannian metric on $\mathbb{C}$ (See Section \ref{sec 4.1}).
\end{itemize}
\end{thm}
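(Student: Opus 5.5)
The plan is to mimic the $SU(2)$ construction of the first theorem, replacing $\mathbb{P}^1$ by a homogeneous surface with the appropriate constant curvature, and then check the ASD condition through the standard characterization. For a connection on a holomorphic bundle over a Kähler surface, integrability of $\bar\partial_E$ makes $F^{0,2}=0$. In the pseudo-Hermitian case, compatibility with $h_E$ relates $F^{2,0}$ to the $h_E$-adjoint of $F^{0,2}$, so $F^{2,0}=0$ as well. Hence ASD reduces to $\Lambda_{\Omega}F=0$. So in both cases I only need to construct $E$ and $h_E$ and compute the contraction of the curvature.

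For (1), take $Y=\Delta$ with $\omega_{\mathrm{Poi}}$, of constant negative curvature, and homogeneous line bundles $\mathcal{O}_\Delta(\pm1)$. These are trivial bundles $\Delta\times\mathbb{C}$ with the $SU(1,1)$-linearization given by the automorphy factor $(cz+d)^{\mp1}$, and with invariant metrics $h^{(\pm1)}=(1-|z|^2)^{\pm1}$ up to normalization. I also need an invariant $(0,1)$-form $\alpha\in\Omega^{0,1}(\Delta,\mathcal{O}(2))$ playing the role of the generator of $H^{0,1}(\mathbb{P}^1,\mathcal{O}(-2))$. The natural candidate is $\alpha=(1-|z|^2)^{-2}d\bar z$ suitably twisted, whose invariance follows from $d(gz)=(cz+d)^{-2}dz$. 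Set
\[
E=p^\ast L^{1/2}\otimes q^\ast\mathcal{O}(-1)\oplus p^\ast L^{-1/2}\otimes q^\ast\mathcal{O}(1),
\]
with upper triangular $\bar\partial_E$ having off-diagonal entry $c\,p^\ast\phi\otimes q^\ast\alpha$, and $h_E=\mathrm{diag}(h_1,-h_2)$. Integrability holds because $\bar\partial\alpha=0$ on a curve and $\phi$ is holomorphic.

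Next comes the curvature computation, done exactly as for an extension. Write $\beta=c\,\phi\otimes\alpha$ and let $\beta^\ast$ be its adjoint with respect to $h_E$. The sign $-h_2$ flips the sign of $\beta\wedge\beta^\ast$ relative to the Hermitian case. The diagonal blocks of $\sqrt{-1}\Lambda_\Omega F$ are
\[
\pm\Bigl(\tfrac{1}{2}\sqrt{-1}\Lambda F_{h_L}\ \text{(scaled by the factor in front of }\omega_X)\mp\tfrac12\kappa\Bigr)\ \pm\ \epsilon\,|\phi|^2|\alpha|^2 ,
\]
where $\kappa$ is the constant curvature contribution of $\mathcal{O}(\pm1)$ on $\Delta$ and $\epsilon=-1$ from the signature. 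Since $|\alpha|^2$ and $\kappa$ are constants by invariance, choosing the Kähler form $\tau\omega_X\oplus\omega_{\mathrm{Poi}}$ and $c$ appropriately turns the vanishing of the diagonal into $\sqrt{-1}\Lambda F_{h_L}-|\phi|^2+\tau=0$. The sign of $\kappa$ is opposite to the $\mathbb{P}^1$ case, which yields $C_1=-1$. The off-diagonal block is $\Lambda\,\nabla\beta$, which has a $dz$-component from $\partial$ on $X$ and a component from $\partial$ on $\Delta$. Contraction only pairs $dz_X$ with $d\bar z_X$, so it vanishes when $\partial^{h}\alpha$ has no $dz\wedge d\bar z$ part after contraction. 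Here I would check that $\partial_h^\ast$-type coclosedness of the invariant $\alpha$ holds, as in the $\mathbb{P}^1$ case.

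For (2), do the same with $Y=\mathbb{C}$, $\omega_{\mathrm{can}}$ flat, and $SE(2)$ acting by $z\mapsto e^{i\theta}z+b$. Take $\mathcal{O}(\pm1)$ trivial with trivial metric but $U(1)$ weight $\pm1/2$, and $\alpha=d\bar z$ with appropriate weight. Now $\kappa=0$ and $|\alpha|^2$ is constant, giving $\sqrt{-1}\Lambda F_{h_L}-|\phi|^2=0$, i.e. $C_1=0$. Equivariance is immediate because translations preserve $d\bar z$ and rotations act by a character that the weights absorb.

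The main obstacle is choosing the invariant data, that is, the linearizations, $\alpha$, and the normalizations. These must make $|\alpha|^2_{h}$ constant, make the off-diagonal contraction vanish, and make the constants match $\tau$. For this I would first work out everything on $\Delta$ in the coordinate $z$ at $z=0$ and extend by homogeneity.
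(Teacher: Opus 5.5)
Your overall strategy matches the paper's. You use an upper-triangular extension whose off-diagonal entry is a multiple of $p^\ast\phi\otimes q^\ast\alpha$ for an invariant $\alpha$, together with the pseudo-metric $\mathrm{diag}(h_1,-h_2)$. You reduce ASD to $\Lambda F=0$ (Lemma~\ref{lem 1.2}). The diagonal blocks then give the vortex equation, and the off-diagonal blocks drop out because they are of mixed type once $\partial_h\alpha=0$.

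\textbf{The gap is in (2): the bundle you propose does not exist.} A continuous character of $U(1)$ is $e^{i\theta}\mapsto e^{im\theta}$ with $m\in\mathbb{Z}$ (Lemma~\ref{lem 1.4}). So there is no $SE(2)$-equivariant line bundle of weight $\pm\frac12$ over $\mathbb{C}=SE(2)/U(1)$.

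What pushed you to half-integers is that the stabilizer $U(1)\subset SE(2)$ acts on $T_0\mathbb{C}$ with weight $1$, not weight $2$ as for $SU(2)$ and $SU(1,1)$. Thus $T^{0,1\ast}\mathbb{C}\simeq M_1$, and an invariant $\alpha$ exists only in $A^{0,1}(M_{-1})$. Hence the top summand of $E$ must have weight exactly one less than the bottom one, which rules out a symmetric choice. (Half-integer weights would be honest weights for the double cover acting through $e^{i\theta}\mapsto e^{2i\theta}$, but the statement asks for $SE(2)$ itself.)

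The fix is to break the symmetry, as the paper does. Take $E=p^\ast L^{1/2}\otimes q^\ast M_{-1}\oplus p^\ast L^{-1/2}$, $\alpha=d\bar z\otimes e_{-1}$, and off-diagonal entry $\frac12\,p^\ast\phi\otimes q^\ast\alpha$. Every $M_n$ with the constant metric is flat, so nothing in your curvature computation changes ($\kappa=0$). With $\Omega=p^\ast\omega_X\oplus q^\ast\omega_{\mathrm{can}}$ one gets $\Lambda_\Omega F_{11}=-\frac{\sqrt{-1}}{2}\bigl(\sqrt{-1}\Lambda_{\omega_X}F_{h_L}-|\phi|^2_{h_L}\bigr)$.

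\textbf{Two smaller points in (1).}

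First, your labels do not fit together. With $g\cdot e_z=(cz+d)^{\mp1}e_{g\cdot z}$, the invariant metric is $(1-|z|^2)^{\mp1}$, not $(1-|z|^2)^{\pm1}$. Also, $\alpha$ must be a $(0,1)$-form with values in the weight $-2$ bundle $\mathrm{Hom}(M_1,M_{-1})$, since $T^{0,1\ast}\Delta\simeq M_2$. It should not take values in your $\mathcal{O}(2)$. This matters because the sign flip of $\kappa$ you assert comes exactly from this constraint. The top summand is forced to be $M_{-1}$, whose invariant metric $1-|z|^2$ has curvature $-\frac{\sqrt{-1}}{2}\omega_{\mathrm{Poi}}$. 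The opposite sign would give $C_1=+1$.

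Second, the check you defer is immediate by invariance. $\partial_{h^{(-2)}}\alpha$ is an invariant $M_{-2}$-valued $(1,1)$-form on $\Delta$, hence equals $f\,\omega_{\mathrm{Poi}}$ with $f\in A(M_{-2})^{SU(1,1)}=0$ by Lemma~\ref{lem 1.5}. The paper instead verifies $\partial_{h^{(-2)}}\alpha=0$ by direct computation. The same argument gives $\bar\partial\beta=0$ and the $SE(2)$ analogues.
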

\begin{remark*}
The problem of constructing solutions to Manton's five vortex equations has been studied from both mathematical and physical perspectives \cite{ACSH, Br1, GP1, GP3, MalMan, Ma0}.
By Theorems \ref{thm 2.1}, \ref{thm 3.1}, and \ref{thm 4.1}, we can construct ASD connections from such solutions.
Therefore, we expect that our results may also contribute to the study of ASD connections.
\end{remark*}
We next consider the converse direction, namely, the construction of solutions of Manton's five vortex equations from ASD connections.
As mentioned above, for the case $(C_0,C_1)=(1,1)$, this direction was studied by Taubes \cite{Tau} when $X=\mathbb{R}^2$ and by Manton and Rink \cite{MaRi} for a general Riemann surface $X$.
For the other cases considered in this paper, related dimensional reductions have also been studied, in particular from a local perspective.
The result below for $(C_0,C_1)=(1,1)$ overlaps with the dimensional reduction of Manton and Rink, although they formulated their result in terms of principal bundles and invariant connections, whereas we use equivariant holomorphic bundles and invariant Hermitian metrics.
\par
More generally, we show that if there exist an equivariant holomorphic bundle over $X\times Y$ and an invariant (pseudo-)Hermitian metric on it such that the connection associated with these data is ASD, then these data give rise to a solution of the corresponding Manton's five vortex equation on $X$.
Since the precise statements for all four cases are rather lengthy, we present only the case $(C_0,C_1)=(1,1)$ here.
\begin{thm}[Theorem \ref{thm 2.2}]
Let $\tau,p,q$ be as above.
Let $(L_1,\bar\partial_{L_1})$ be a holomorphic line bundle over $X$.
We assume $\mathrm{deg}L_1>0$ if $X$ is compact.
Let $(L_2,\bar\partial_{L_2})$ be a holomorphic line bundle over $X$.
We assume 
\[
\mathrm{deg}L_2=-\mathrm{deg}L_1
\]
if $X$ is compact.
\par
Let $E$ be a holomorphic bundle over $X\times \mathbb{P}^1$ that is an extension of 
$(p^\ast L_2\otimes q^\ast \mathcal{O}(1),p^\ast\bar\partial_{L_2}\otimes\mathrm{Id}+\mathrm{Id}\otimes q^\ast\bar\partial_{\mathcal{O}(1)})$
by $(p^\ast L_1\otimes q^\ast \mathcal{O}(-1),p^\ast\bar\partial_{L_1}\otimes\mathrm{Id}+\mathrm{Id}\otimes q^\ast\bar\partial_{\mathcal{O}(-1)})$.
We denote by $\bar\partial_E$ the holomorphic structure of $E$.
If we assume 
\begin{itemize}
\item[1.1.] $(E,\bar\partial_E)$ is an $SU(2)$-equivariant holomorphic bundle.
\item[1.2.] $E$ admits an $SU(2)$-invariant Hermitian metric $h_E$.
\item[1.3.] The Chern connection associated with $\bar\partial_E$ and $h_E$ is an ASD connection with respect to $\Omega_\tau$.
\end{itemize}
Then, there exists a holomorphic section $\phi$ of $(L_1\otimes L_2^{-1},\bar\partial_{L_1}\otimes \mathrm{Id}+ \mathrm{Id}\otimes\bar\partial_{L^{-1}_2})$
and Hermitian metrics $h_{L_1}$ and $h_{L_2}$ of $L_1$ and $L_2$ such that the triple 
\[
\big((L_1\otimes L^{-1}_2, \bar\partial_{L_1}\otimes \mathrm{Id}+\mathrm{Id}\otimes \bar\partial_{L^{-1}_2}),\phi,h_{L_1}h^{-1}_{L_2}\big)
\]
satisfies Manton's five $\tau$-vortex equation with $(C_0,C_1)=(1,1)$ on $(X,\omega_X)$.
Here, $(L^{-1}_2,\bar\partial_{L^{-1}_2})$ is the dual holomorphic line bundle of $(L_2,\bar\partial_{L_2})$, and $h_{L^{-1}_2}$ is the Hermitian metric of $L^{-1}_2$ induced by $h_{L_2}$.
\end{thm}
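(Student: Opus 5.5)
The plan is to exploit the $SU(2)$-symmetry to reduce every object on $X\times\mathbb{P}^1$ to data on $X$, and then to read off the vortex equation from the ASD condition.

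\emph{Step 1: the holomorphic extension class.} Since $E$ is an extension, $\bar\partial_E$ has the upper triangular form
\[
\bar\partial_E=\begin{pmatrix}\bar\partial_1&\beta\\0&\bar\partial_2\end{pmatrix},\qquad \beta\in\Omega^{0,1}\big(p^\ast(L_1L_2^{-1})\otimes q^\ast\mathcal{O}(-2)\big).
\]
By the Künneth formula,
\[
H^1\big(X\times\mathbb{P}^1,p^\ast(L_1L_2^{-1})\otimes q^\ast\mathcal{O}(-2)\big)\supset H^0(X,L_1L_2^{-1})\otimes H^1(\mathbb{P}^1,\mathcal{O}(-2)),
\]
and $H^1(\mathbb{P}^1,\mathcal{O}(-2))\cong\mathbb{C}$ is spanned by the class of $\alpha$. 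The other Künneth term $H^1(X,L_1L_2^{-1})\otimes H^0(\mathbb{P}^1,\mathcal{O}(-2))$ vanishes. $SU(2)$-equivariance (assumption 1.1) forces $\beta$ to be invariant. The $SU(2)$-invariant representative in the Dolbeault class is, up to a $\bar\partial$-exact invariant correction, of the form $\sqrt{1/\tau}\,p^\ast\phi\otimes q^\ast\alpha$ with $\phi$ a holomorphic section of $L_1L_2^{-1}$. This uses that $\alpha$ is the unique invariant harmonic representative. After applying an invariant gauge transformation of unipotent form we may therefore assume $\beta$ has exactly this form. This defines $\phi$. Nontriviality of $\phi$ should follow from $\deg L_1>0$ together with ASD, or should be handled separately.

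\emph{Step 2: reducing the metric.} I would show that an $SU(2)$-invariant Hermitian metric $h_E$ is diagonal, equal to $\mathrm{diag}(p^\ast h_{L_1}\otimes q^\ast h^{(-1)},\,p^\ast h_{L_2}\otimes q^\ast h^{(1)})$ for some metrics $h_{L_i}$ on $X$. The off-diagonal term would be an invariant section of $\mathrm{Hom}$ between summands carrying $\mathcal{O}(-1)$ and $\mathcal{O}(1)$. The isotropy $U(1)$ at a point of $\mathbb{P}^1$ acts on this term with nonzero weight $\pm2$, so invariance kills it. The diagonal entries restricted to each $\{x\}\times\mathbb{P}^1$ are invariant metrics on $\mathcal{O}(\mp1)$, hence constant multiples of the standard $h^{(\mp1)}$. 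This step, together with the identification of the invariant representative in Step 1, is where I expect the most care. In particular, one must handle the equivariant structure being possibly twisted by a character or by a lift that does not preserve the splitting, and one must check that the gauge change in Step 1 does not destroy the diagonal form. To deal with this, I would first compare the given $SU(2)$-action with the standard one through Schur's lemma on each fibre over $X$.

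\emph{Step 3: computing the curvature.} With $\bar\partial_E$ and $h_E$ in the same form as in Theorem \ref{thm 2.1}, the computation there applies verbatim. The curvature $F_{h_E}$ has diagonal blocks
\[
p^\ast F_{h_{L_i}}+q^\ast F_{h^{(\mp1)}}\mp\tfrac1\tau|\phi|^2\,\alpha\wedge\alpha^{\ast},
\]
together with off-diagonal $(1,1)$-terms of mixed type that are automatically ASD. The condition $\Lambda_{\Omega_\tau}F_{h_E}=0$ then yields two scalar equations on $X$:
\[
\tfrac{4\pi}{\tau}\sqrt{-1}\Lambda_{\omega_X}F_{h_{L_1}}+\tfrac{4\pi}{\tau}|\phi|^2-c=0,\qquad \tfrac{4\pi}{\tau}\sqrt{-1}\Lambda_{\omega_X}F_{h_{L_2}}-\tfrac{4\pi}{\tau}|\phi|^2+c=0,
\]
where $c$ is the Fubini--Study constant (with normalizations as in Theorem \ref{thm 2.1}). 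Subtracting the two equations gives $\sqrt{-1}\Lambda F_{h_{L_1}h_{L_2}^{-1}}+|\phi|^2_{h_{L_1}h_{L_2}^{-1}}-\tau=0$ up to fixing constants, since the factor $2$ is absorbed by the $\tfrac12$ in the Theorem \ref{thm 2.1} setup. This is the $(1,1)$ vortex equation. Summing the two equations gives $\Lambda F_{h_{L_1}h_{L_2}}=0$, which is consistent with $\deg L_1+\deg L_2=0$; this consistency is what the degree hypothesis secures.
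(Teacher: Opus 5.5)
Your three steps follow the paper's proof exactly. Step 1 is Proposition \ref{prop 2.5}, Step 2 is Proposition \ref{prop 2.6}, and Step 3 is the proof of Proposition \ref{prop 2.7}: impose $\Lambda_{\Omega_\tau}F_{11}=\Lambda_{\Omega_\tau}F_{22}=0$ and subtract. The outline is sound, but the constants in Step 3 are wrong as written, and two side remarks need correcting.

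\emph{Step 3.} Use $\Lambda_{\Omega_\tau}(p^\ast\eta)=\frac{4\pi}{\tau}p^\ast\Lambda_{\omega_X}\eta$, $\Lambda_{\Omega_\tau}(q^\ast\omega_{\mathrm{FS}})=1$, $\alpha\wedge\beta=2\pi\sqrt{-1}\,\omega_{\mathrm{FS}}$ and $F_{h^{(\mp1)}}=\pm2\pi\sqrt{-1}\,\omega_{\mathrm{FS}}$. After multiplying by $\frac{\sqrt{-1}\tau}{4\pi}$, the two diagonal conditions read
\[
\sqrt{-1}\Lambda_{\omega_X}F_{h_{L_1}}+\tfrac12|\phi|^2-\tfrac{\tau}{2}=0,\qquad
\sqrt{-1}\Lambda_{\omega_X}F_{h_{L_2}}-\tfrac12|\phi|^2+\tfrac{\tau}{2}=0,
\]
with $|\phi|=|\phi|_{h_{L_1}h^{-1}_{L_2}}$. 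In your normalization this means $c=2\pi$ and the coefficient of $|\phi|^2$ is $\frac{2\pi}{\tau}$, not $\frac{4\pi}{\tau}$. Even with $c=2\pi$, your displayed equations subtract to $\sqrt{-1}\Lambda_{\omega_X}F_{h_{L_1}h^{-1}_{L_2}}+2|\phi|^2-\tau=0$.

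Appealing to ``the $\frac12$ in the Theorem \ref{thm 2.1} setup'' cannot repair this, because no square root appears in this direction. The halves above come from $\alpha\wedge\beta$ and from the weighting built into $\Omega_\tau$, and subtraction recombines them into exactly the $(1,1)$ equation. Also, in this direction you only need the diagonal blocks of $\Lambda_{\Omega_\tau}F_{h_E}=0$; the off-diagonal analysis matters only for Theorem \ref{thm 2.1}.

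\emph{Step 1.} The K\"unneth detour and the gauge change are unnecessary. Because the action is the one induced from $\mathcal{O}(\pm1)$, $\psi$ itself is $SU(2)$-invariant. On each slice $\{x\}\times\mathbb{P}^1$, its $X$-directional part, contracted with a fixed vector in $T^{0,1}_xX$, is an invariant section of $(L_1L_2^{-1})_x\otimes\mathcal{O}(-2)$, hence $0$. Its $\mathbb{P}^1$-directional part lies in $(L_1L_2^{-1})_x\otimes A^{0,1}(\mathcal{O}(-2))^{SU(2)}=(L_1L_2^{-1})_x\otimes\mathbb{C}\alpha$. So $\psi=p^\ast\phi\otimes q^\ast\alpha$ on the nose, and $\bar\partial_E^2=0$ gives $\bar\partial\phi=0$.

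The same vanishing shows that the averaged invariant $u$ in your ``$\bar\partial$-exact invariant correction'' is zero. Your unipotent gauge transformation is therefore the identity. This removes your worry about it spoiling the diagonal form of $h_E$, and avoids any K\"unneth argument over non-compact $X$.

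\emph{Nontriviality of $\phi$.} It does not follow from ASD and $\deg L_1>0$. Take compact $X$ with $\deg L_1=-\deg L_2=\tau\,\mathrm{Vol}(X)/4\pi$, the split extension $\psi=0$, and metrics with $\sqrt{-1}\Lambda_{\omega_X}F_{h_{L_1}}=\frac{\tau}{2}=-\sqrt{-1}\Lambda_{\omega_X}F_{h_{L_2}}$. This satisfies 1.1--1.3 with $\phi=0$. The paper's argument does not produce nontriviality either, since its $\phi$ is just the extension cocycle, so drop that remark.

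Your observation that adding the two equations gives $\Lambda_{\omega_X}(F_{h_{L_1}}+F_{h_{L_2}})=0$ is correct. It shows that $\deg L_2=-\deg L_1$ is necessary for the hypotheses to hold, but the proof of the vortex equation never uses it.
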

See (2) of Theorem \ref{thm 2.2} for the $(C_0,C_1)=(-1,1)$ case, Theorem \ref{thm 3.2} for the $(C_0,C_1)=(-1,-1)$ case, and Theorem \ref{thm 4.2} for the $(C_0,C_1)=(-1,0)$ case. 

\subsection*{Comparison with Previous Works}
In \cite{Tau}, Taubes established a correspondence between $O(3)$-invariant $SU(2)$ ASD connections on $\mathbb{R}^2\times S^2$ and solutions of the vortex equation on $\mathbb{R}^2$, which corresponds to Manton's five vortex equations with $(C_0,C_1)=(1,1)$.
As explained in the Introduction, Manton and Rink \cite{MaRi} studied the global dimensional reduction in the case $(C_0,C_1)=(1,1)$ in terms of principal bundles and invariant connections.
Garc\'ia-Prada further studied the relation between vortex equations and Hermitian-Yang-Mills connections using equivariant holomorphic bundles and invariant Hermitian metrics in \cite{GP2}.
Following Garc\'ia-Prada's approach, we formulate our construction in terms of equivariant holomorphic bundles and invariant (pseudo-)Hermitian metrics.
A key feature of our construction is the introduction of a square root $(L^{1/2},\bar\partial_{L^{1/2}})$ of the holomorphic line bundle $(L,\bar\partial_L)$, which plays a crucial role in relating Manton's five vortex equations to ASD connections.
\par
Related dimensional reductions of ASD equations to vortex equations were also studied by Contatto and Dorigoni \cite{CoDo} and Contatto and Dunajski \cite{CoDu}.
The work of Contatto and Dunajski, in particular, is one of the motivations for the present paper.
They showed that Manton's five vortex equations arise from the ASD equation by dimensional reduction.
Motivated by their work, we give a global formulation of this relation in terms of equivariant holomorphic bundles and invariant (pseudo-)Hermitian metrics over $X\times Y$.
Moreover, we explicitly construct a lift from solutions of Manton's five vortex equations to ASD connections on $X\times Y$.
\par
In our previous work \cite{Ono}, we studied the relation between Manton's five vortex equations and the pseudo-Hermitian-Yang-Mills equation in the cases $(C_0,C_1)=(-1,1),(-1,0),(-1,-1)$.
The construction in \cite{Ono} does not require the existence of a square root of $L$ and therefore does not impose the corresponding restriction on the degree of $L$ when $X$ is compact.
However, that construction does not give ASD connections.
In the present paper, we assume that $(L,\bar\partial_L)$ admits a square root $(L^{1/2},\bar\partial_{L^{1/2}})$.
Under this additional assumption, we can construct ASD connections from solutions of Manton's five vortex equations and clarify the relation between Manton's five vortex equations and ASD connections.
\subsubsection*{Structure of the paper}
The paper is organized as follows.
In Section \ref{sec 1}, we collect the preliminary material used throughout the paper.
In Section \ref{sec 2}, we study the correspondence between Manton's vortex equations with $(C_0,C_1)=(\pm1,1)$ and invariant ASD connections.
In Section \ref{sec 2.1}, we review the $SU(2)$-action on $\mathbb{P}^1$ and the associated equivariant geometric data.
In Section \ref{sec 2.2}, we construct $SU(2)$-invariant ASD connections from solutions of the vortex equations.
Conversely, in Section \ref{sec 2.3}, we show how the vortex equations arise from $SU(2)$-invariant ASD connections.
\par
Section \ref{sec 3} treats the case $(C_0,C_1)=(-1,-1)$.
In Section \ref{sec 3.1}, we review the $SU(1,1)$-action on the unit disk $\Delta$ and the corresponding equivariant geometric data.
Sections \ref{sec 3.2} and \ref{sec 3.3} establish the two directions of the correspondence between vortex solutions and $SU(1,1)$-invariant ASD connections, respectively.
\par
Finally, Section \ref{sec 4} treats the case $(C_0,C_1)=(-1,0)$.
In Section \ref{sec 4.1}, we review the $SE(2)$-action on $\mathbb{C}$ and the corresponding equivariant geometric data.
Sections \ref{sec 4.2} and \ref{sec 4.3} establish the two directions of the correspondence between vortex solutions and $SE(2)$-invariant ASD connections, respectively.
\subsubsection*{AI Declaration}
The author used ChatGPT for language editing and other editorial assistance in preparing the manuscript.
\subsubsection*{Acknowledgment}
The author was supported by JSPS KAKENHI Grant Number JP26KJ0189.
\section{Preliminaries}\label{sec 1}
\subsection{Manton's Five Vortex Equations}\label{sec 1.1}
Let $X$ be a Riemann surface with a K\"ahler form $\omega_X$, let $(L,\bar\partial_L)$ be a holomorphic line bundle over $X$, and let $h$ be a Hermitian metric of $L$.
Suppose that $\phi$ is a non-trivial holomorphic section of $(L,\bar\partial_L)$.
We denote by $\nabla_h$ the Chern connection associated with $\bar\partial_L$ and $h$, and denote by $F_h$ its curvature.
Let $\Lambda_{\omega_X}$ be the contraction operator with respect to $\omega_X$ and $\tau>0$ a positive constant.
\par
Manton's five $\tau$-vortex equations are 
\be\label{eq 1.1}
\sqrt{-1}\Lambda_{\omega_X}F_h+C_0|\phi|^2_h-C_1\tau=0
\ee
where 
\be\label{eq 1.2}
(C_0,C_1)=(1,1),(-1,1),(-1,0),(-1,-1),(0,1).
\ee
We say that the triple $((L,\bar\partial_L),\phi,h)$ satisfies Manton's five $\tau$-vortex equations if it satisfies \eqref{eq 1.1} for some $(C_0,C_1)$ in \eqref{eq 1.2}.

\subsection{Pseudo-Hermitian Metrics of Signature $(p,q)$}
Let $M$ be a complex manifold with the K\"ahler form $\omega_M$, and let $E\to M$ be a complex vector bundle of rank $r$.
We denote by $GL(E)$ the principal $GL(r)$-bundle associated with $E$ and denote by $\overline E$ the complex conjugate of $E$. 
\begin{de}\label{def 1.1}
A Hermitian form $h$ on $E$ is a section $h\in A(E^\vee\otimes \overline{E}^\vee)$ such that for every $x\in M$, $h|_{E_x}:E_x\times E_x\to \mathbb{C}$ is a Hermitian form.\par
We say that $h$ is a pseudo-Hermitian metric if it is a non-degenerate Hermitian form on $E$.
We say that $h$ is a pseudo-Hermitian metric of signature $(p,q)$ (with $(p+q=r)$) if for each $x\in M$, $h|_{E_x}$ has signature $(p,q)$. 
Equivalently, $h$ defines a $U(p,q)$-reduction of $GL(E)$.
Here $U(p,q)$ is the indefinite unitary group.
\end{de}
We note that a Hermitian metric on $E$ is a pseudo-Hermitian metric of signature $(r,0)$.\par
Let $h$ be a pseudo-Hermitian metric of signature $(p,q)$.
We say that a connection $D$ of $E$ is compatible with $h$ if 
\[
dh(\cdot,\cdot)=h(D\cdot,\cdot)+h(\cdot,D\cdot).
\]
Let $\bar\partial_E$ be a holomorphic structure of $E$ and let $h$ be a Hermitian metric on $E$.
Recall that the Chern connection $\nabla_h$ is the unique $h$-unitary connection such that $\nabla^{0,1}_h=\bar\partial_E$.
Since the construction of Chern connections (see \cite[Proposition 1.4.9]{Ko} for example) only uses the non-degeneracy of $h$, we have 
\begin{prop}\label{prop 1.1}
Let $\bar\partial_E$ be a holomorphic structure of $E$ and let $h$ be a pseudo-Hermitian metric of signature $(p,q)$.
Then there exists a unique connection $\nabla_h$ such that 
\begin{itemize}
\item[(1)] $\nabla_h$ is a connection compatible with $h$,
\item[(2)] $\nabla^{0,1}_h=\bar\partial_E$.
\end{itemize}
If the signature of $h$ is $(r,0)$, then $\nabla_h$ is the Chern connection.
\end{prop}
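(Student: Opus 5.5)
We prove Proposition \ref{prop 1.1} by repeating the standard construction of the Chern connection, checking at each step that only the non-degeneracy of $h$ is used and never its positivity. The argument is local-to-global: we first establish uniqueness, which forces a local formula, and then show that the locally defined connections agree on overlaps.

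\emph{Uniqueness.} Let $\nabla$ and $\nabla'$ both satisfy (1) and (2). Their difference $A=\nabla-\nabla'$ is an $\mathrm{End}(E)$-valued $1$-form.
\begin{itemize}
\item By (2), $A$ is of type $(1,0)$.
\item By (1), $h(A s,t)+h(s,At)=0$ for all sections $s,t$.
\end{itemize}
Take the $(1,0)$-part of the second identity. Since $h$ is conjugate-linear in its second slot, $h(s,At)$ is of type $(0,1)$, so we get $h(As,t)=0$ for all $t$. Non-degeneracy of $h$ then gives $As=0$, hence $A=0$.

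\emph{Existence.} Fix a local holomorphic frame $e_1,\dots,e_r$ of $(E,\bar\partial_E)$ on an open set $U$. Let $H=(h(e_i,e_j))$ be the Gram matrix. It is a Hermitian matrix, and it is invertible exactly because $h$ is non-degenerate.

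Define $\nabla$ on $U$ by requiring $\nabla e_j=\sum_i\theta_{ij}e_i$, where the connection matrix is $\theta=\overline{H}^{-1}\partial\overline{H}$, or equivalently the transpose convention $\theta^{T}=\partial H\cdot H^{-1}$, whichever matches how $h$ is written in coordinates. This is the only place $H^{-1}$ enters, so invertibility is all that is needed. Two properties follow:
\begin{itemize}
\item Since $\theta$ is of type $(1,0)$ and the $e_i$ are holomorphic, $\nabla^{0,1}=\bar\partial_E$, which is (2).
\item Compatibility (1) reduces to checking $dH_{ij}=h(\nabla e_i,e_j)+h(e_i,\nabla e_j)$. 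The $(1,0)$-part of this identity holds by the choice of $\theta$. The $(0,1)$-part is its complex conjugate, using that $H$ is Hermitian.
\end{itemize}

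\emph{Gluing.} On overlaps of two frame domains, both local connections satisfy (1) and (2), so they coincide by the uniqueness step. Hence the local connections glue to a global $\nabla_h$.

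\emph{The case of signature $(r,0)$.} Here $h$ is a Hermitian metric, and compatibility with $h$ is exactly the condition of being $h$-unitary. So $\nabla_h$ satisfies the defining properties of the Chern connection, and by uniqueness it is the Chern connection.

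The only point requiring care is keeping the index and conjugation conventions for $\theta$ consistent. Positivity of $h$ is nowhere used, so no further obstacle arises.
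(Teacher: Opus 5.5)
Your proposal is correct and follows the paper's approach: the paper justifies Proposition \ref{prop 1.1} only by remarking that the standard construction of the Chern connection (Kobayashi, Proposition 1.4.9) uses nothing but the non-degeneracy of $h$. You carry that construction out explicitly (uniqueness via the $(1,0)$-part of the compatibility identity, the local formula, gluing), and the two expressions you offer for the connection matrix, $\theta=\overline{H}^{-1}\partial\overline{H}$ and $\theta^{T}=\partial H\cdot H^{-1}$, are in fact the same condition because $\overline{H}=H^{T}$.
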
 

\subsection{Anti-Self-Dual Forms and Anti-Self-Dual Connections}\label{sec 1.3}
In this section, we review anti-self-dual (ASD) connections, with particular emphasis on the case of K\"ahler surfaces.
We first recall self-dual and anti-self-dual $2$-forms on an oriented Riemannian four-manifold and then describe their characterization on a K\"ahler surface.
We next recall the definition of ASD connections and finally describe the ASD condition on a K\"ahler surface.
For more details on ASD connections, see \cite{DK}.
\par
Let $M$ be a smooth oriented four-dimensional manifold equipped with a Riemannian metric $g$.
We denote by $\ast_g$ the Hodge star associated with $g$.
It is well known that the space of complex-valued $2$-forms on $M$ decomposes as
\[
A^2(M)=A^{+}(M)\oplus A^{-}(M)
\]
where
\begin{align*}
&A^+(M):=\{\alpha\in A^2(M)\,\,|\,\, \ast_g\alpha=\alpha\},\\
& A^-(M):=\{\alpha\in A^2(M)\,\,|\,\, \ast_g\alpha=-\alpha\}.
\end{align*}
We say that $\alpha\in A^2(M)$ is self-dual if  $\alpha\in A^+(M)$ and anti-self-dual if $\alpha\in A^-(M)$.\par
From now on, we assume that $(M,g)$ is a K\"ahler surface.
We denote by $\omega_M$ the K\"ahler form of $M$ and denote by $I$ the complex structure of $M$.
The following result will be used later.
\begin{lem}\label{lem 1.1}
Let $(M,g)$ and $\omega_M$ be as above.
Then the following decompositions hold.
\begin{align*}
&A^+(M)=A^{2,0}(M)\oplus A^{0,2}(M)\oplus\mathbb{C}\omega_M,\\
&A^-(M)=A^{1,1}(M)\cap \mathrm{Ker}(\Lambda_{\omega_M}).
\end{align*}
\end{lem}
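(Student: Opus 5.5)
The plan is to work pointwise, since both decompositions are linear-algebraic statements about $\Lambda^2T^\ast_xM\otimes\mathbb{C}$ at a single point $x\in M$. At $x$ I choose a unitary coframe, i.e. real orthonormal covectors $e^1,e^2,e^3,e^4$ with $Ie^1=e^2$ (up to the sign convention) and $Ie^3=e^4$. In this frame
\[
\omega_M=e^1\wedge e^2+e^3\wedge e^4 ,
\]
and the orientation is the complex one, $\mathrm{vol}=\tfrac12\omega_M^2=e^{1234}$. I then introduce $\theta^1=e^1+\sqrt{-1}e^2$ and $\theta^2=e^3+\sqrt{-1}e^4$, so that $\theta^1,\theta^2$ span $\Lambda^{1,0}_x$.

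First I compute $\ast_g$ on the standard basis of $\Lambda^2$. Using $\ast e^{12}=e^{34}$, $\ast e^{13}=-e^{24}$ and $\ast e^{14}=e^{23}$, I get:
\begin{itemize}
\item[(a)] $\ast\omega_M=\omega_M$;
\item[(b)] $\theta^1\wedge\theta^2=(e^{13}-e^{24})+\sqrt{-1}(e^{14}+e^{23})$ is self-dual, and so is its conjugate, which spans $\Lambda^{0,2}$;
\item[(c)] the forms $e^{12}-e^{34}$, $e^{13}+e^{24}$ and $e^{14}-e^{23}$ are anti-self-dual.
\end{itemize}

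Next I express these forms in terms of $\theta^i\wedge\bar\theta^j$. Since $\theta^1\wedge\bar\theta^2$ is a complex combination of $e^{13}+e^{24}$ and $e^{14}-e^{23}$ (because $\theta^1\wedge\bar\theta^2=(e^{13}+e^{24})+\sqrt{-1}(e^{23}-e^{14})$), and $e^{12}-e^{34}$ is a multiple of $\theta^1\wedge\bar\theta^1-\theta^2\wedge\bar\theta^2$, the three anti-self-dual forms lie in $\Lambda^{1,1}$. The contraction $\Lambda_{\omega_M}$ sends $\theta^i\wedge\bar\theta^j\mapsto c\,\delta_{ij}$ for a fixed nonzero constant $c$ fixed by the normalization convention. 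Hence $\Lambda_{\omega_M}$ kills all three anti-self-dual forms, while $\Lambda_{\omega_M}\omega_M\neq0$. This gives the inclusion
\[
\Lambda^-\subset\Lambda^{1,1}\cap\ker\Lambda_{\omega_M}.
\]

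Finally I compare dimensions. The space $\Lambda^{1,1}$ has complex dimension $4$ and $\Lambda_{\omega_M}$ is surjective onto $\mathbb{C}$, so $\Lambda^{1,1}\cap\ker\Lambda_{\omega_M}$ has dimension $3=\dim\Lambda^-$. Therefore the inclusion above is an equality. Similarly, $\Lambda^{2,0}\oplus\Lambda^{0,2}\oplus\mathbb{C}\omega_M$ is a direct sum of dimension $3$, the three pieces lying in complementary types and $\omega_M$ having nonzero trace. By (a) and (b) it is contained in $\Lambda^+$, which also has dimension $3$, so it equals $\Lambda^+$. Passing from points to smooth sections gives the stated decompositions of $A^\pm(M)$.

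The only delicate point is bookkeeping of sign conventions: the orientation, the sign in $Ie^1=\pm e^2$, and the normalization of $\Lambda_{\omega_M}$. I will fix these at the start by checking that $\omega_M^2/2$ is the volume form of $g$. Once that is settled, everything else is a direct finite check.
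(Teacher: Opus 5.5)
Your proposal is correct and follows essentially the same route as the paper: a pointwise unitary coframe, explicit Hodge star computations, the basis $\theta^i\wedge\bar\theta^j$ of $\Lambda^{1,1}$, and the dimension count ($\dim\Lambda^{1,1}=4$, $\Lambda_{\omega_M}$ surjective, $\dim\Lambda^-=3$). The differences are cosmetic: you compute $\Lambda_{\omega_M}$ via $\theta^i\wedge\bar\theta^j\mapsto c\,\delta_{ij}$ where the paper uses $\Lambda_{\omega_M}=\ast^{-1}L_{\omega_M}\ast$, and you obtain $\Lambda^+$ by a dimension count where the paper matches spans directly; one small wording slip is that to place $e^{13}+e^{24}$ and $e^{14}-e^{23}$ in $\Lambda^{1,1}$ you need the converse of what you wrote, namely that they are, up to sign, the real and imaginary parts of $\theta^1\wedge\bar\theta^2$ (which lie in $\Lambda^{1,1}$ since it is closed under conjugation), and this is immediate from your displayed formula.
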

\bpr
Although this result is well-known, we give a proof in the Appendix for convenience.
\epr
Let $E\to M$ be a vector bundle and let $\nabla$ be a connection of $E$.
We denote by $F_\nabla$ the curvature of $\nabla$.
We say that $\nabla$ is a self-dual (SD) connection if 
\[
\ast_gF_\nabla=F_\nabla
\]
and it is an anti-self-dual (ASD) connection if 
\[
\ast_g F_\nabla=-F_{\nabla}.
\]
The connection $\nabla$ is an SD connection (resp. an ASD connection) if and only if 
\[
F_\nabla\in A^+(\mathrm{End}E)\,\, (\text{resp.}\,\, F_\nabla\in A^-(\mathrm{End}E)),
\]
where
\[
A^\pm(\mathrm{End}E)
:=
\left\{
\alpha\in A^2(\mathrm{End}E)
\mid
\ast_g\alpha=\pm\alpha
\right\}.
\]
Let $(E,\bar\partial_E)$ be a holomorphic bundle on $M$ and let $h$ be a pseudo-Hermitian metric on $E$.
Let $\nabla_h$ be the unique connection that is $\nabla^{0,1}_{h}=\bar\partial_E$ and compatible with $h$ (See Proposition \ref{prop 1.1}).
We denote by $F_h$ the curvature of $\nabla_h$.
By the construction of $\nabla_h$, $F_h\in A^{1,1}(\mathrm{End}E)$.
Hence, by Lemma \ref{lem 1.1}, we have 
\begin{lem}\label{lem 1.2}
$\nabla_h$ is an ASD connection if and only if
\[
\Lambda_{\omega_M}F_h=0.
\]
\end{lem}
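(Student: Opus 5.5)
Since $F_h\in A^{1,1}(\mathrm{End}E)$, this is essentially immediate from Lemma \ref{lem 1.1}, and the plan is to verify the two inputs and then combine them.

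First, I will justify that $F_h$ is of type $(1,1)$. Locally choose a holomorphic frame of $(E,\bar\partial_E)$, and let $H$ be the matrix of $h$ in this frame. $H$ is invertible and Hermitian but not necessarily positive definite. The standard computation behind Proposition \ref{prop 1.1} shows that the unique compatible connection with $(0,1)$-part $\bar\partial_E$ has connection form $H^{-1}\partial H$ (with the appropriate transpose convention), which is of type $(1,0)$. Its curvature is therefore
\[
F_h=\bar\partial(H^{-1}\partial H).
\]
The $(2,0)$-part vanishes because the $(2,0)$-component of the curvature equals $(\nabla_h^{1,0})^2$, computed from the formula as $\partial(H^{-1}\partial H)+(H^{-1}\partial H)^2=0$. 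The $(0,2)$-part is $\bar\partial_E^2=0$. Only non-degeneracy of $H$ is used here, so the argument works for any signature.

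Next, I will apply Lemma \ref{lem 1.1} with coefficients in $\mathrm{End}E$. The Hodge star acts only on the form part, so the decomposition $A^2=A^+\oplus A^-$ tensors with $\mathrm{End}E$. This gives
\[
A^-(\mathrm{End}E)=A^{1,1}(\mathrm{End}E)\cap\mathrm{Ker}\Lambda_{\omega_M}.
\]
By the characterization recalled before the statement, $\nabla_h$ is ASD if and only if $F_h\in A^-(\mathrm{End}E)$.

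Finally, I combine the two steps. Since $F_h$ already lies in $A^{1,1}(\mathrm{End}E)$, membership in $A^-(\mathrm{End}E)$ is equivalent to $\Lambda_{\omega_M}F_h=0$. I expect no real obstacle. The only point needing care is the first step, namely confirming that the pseudo-Hermitian (indefinite) case still gives a $(1,1)$ curvature. This reduces to the local formula above, which never uses positivity of $H$.
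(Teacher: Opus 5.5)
Your proposal is correct and follows the paper's argument. The paper's proof is exactly that $F_h\in A^{1,1}(\mathrm{End}E)$ by construction of $\nabla_h$, combined with Lemma~\ref{lem 1.1} identifying $A^-$ with $A^{1,1}\cap\mathrm{Ker}(\Lambda_{\omega_M})$. Your local computation with $H^{-1}\partial H$ and the fiberwise extension to $\mathrm{End}E$-valued forms make explicit what the paper leaves implicit, and you correctly note that only the non-degeneracy of $H$ is used.
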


\subsection{Chern Classes}\label{sec 1.4}
In this section, we review Chern classes on a compact K\"ahler manifold.
For further details, we refer the reader to \cite{Wells}.
\par
Let $(M,\omega_M)$ be a compact K\"ahler manifold and let $E$ be a complex vector bundle over $M$.
For a connection $\nabla$ on $E$, we denote by
$F_\nabla$ its curvature.
The total Chern form of $(E,\nabla)$ is defined by
\[
c(E,\nabla)
:=
\det\left(
I+\frac{\sqrt{-1}}{2\pi}F_\nabla
\right).
\]
We write
\[
c(E,\nabla)
=
1+c_1(E,\nabla)+c_2(E,\nabla)+\cdots,
\]
where
\[
c_i(E,\nabla)\in A^{2i}(M).
\]
In particular, the first and second Chern forms are given by
\[
c_1(E,\nabla)
=
\frac{\sqrt{-1}}{2\pi}\operatorname{Tr}(F_\nabla)
\]
and
\[
c_2(E,\nabla)
=
-\frac{1}{8\pi^2}
\bigg\{\operatorname{Tr}(F_\nabla)\wedge \operatorname{Tr}(F_\nabla)-\operatorname{Tr}(F_\nabla\wedge F_\nabla)\bigg\}.
\]
By Chern--Weil theory, each $c_i(E,\nabla)$ is closed, and its de Rham cohomology class is independent of the choice of the connection $\nabla$.
Thus, we obtain the $i$-th Chern class
\[
c_i(E):=[c_i(E,\nabla)]\in H^{2i}(M).
\]
Moreover,
\[
c_i(E)\in H^{2i}(M,\mathbb{Z}).
\]
If the real dimension of $M$ is $4$, we have
\[
\int_Mc_2(E)\in\mathbb{Z}
\]
since $c_2(E)\in H^4(M,\mathbb{Z})$.
\begin{de}\label{def 1.2}
Let $E$ be a vector bundle.
Suppose that $E$ admits an ASD connection $\nabla$.
We then call the integral
\[
\int_Mc_2(E)\in\mathbb{Z}
\]
the instanton number of $E$.
\end{de}
Before we end this section, we recall the \textit{degree} of a vector bundle.
Suppose that $\dim_{\mathbb{C}}M=n$.
The degree of $E$ with respect to $\omega_M$ is defined by
\[
\deg_{\omega_M}(E)
:=\int_M c_1(E,\nabla)\wedge\frac{\omega_M^{n-1}}{(n-1)!}=\frac{\sqrt{-1}}{2\pi}\int_M\Lambda_{\omega_M}\operatorname{Tr}(F_\nabla)\frac{\omega_M^n}{n!}.
\]
By the above remark, this definition is independent of the choice of $\nabla$.
When the K\"ahler form is fixed, we simply write
\[
\deg(E):=\deg_{\omega_M}(E).
\]
We note that the degree depends on the K\"ahler class.

\subsection{Equivariant Line Bundles}\label{sec 1.5}
We collect some results of $G$-equivariant bundles over homogeneous spaces $G/K$.
The particular Lie groups which we will use later are $G=SE(2), SU(1,1)$ and $K=U(1)$.
The result of this section is well-known, and there are no new results.
For a thorough treatment of the results in this section, see \cite{Akh, KN, Se}.\par
We first recall the definition of $G$-equivariant vector bundles.
We also recall the definition of $G$-equivariant holomorphic vector bundle.
Although we do not encounter $G$-equivariant holomorphic bundles in this section, we will use them later.
\begin{de}\label{def 1.3}
Let $M$ be a smooth manifold with a Lie group $G$ acting on it.
Let $\pi:E\to M$ be a smooth vector bundle over $M$.
We say that $E$ is a $G$-equivariant vector bundle if 
\begin{itemize}
\item[(1)] $G$ acts on $E$, 
\item[(2)] $\pi(g\cdot v)=g\cdot \pi(v)$, 
\item[(3)] for every $g\in G$, $g\cdot : \pi^{-1}(x)\to \pi^{-1}(g\cdot x)$ is a linear isomorphism.
\end{itemize}
\par
Now suppose $M$ is a complex manifold and $G$ is acting holomorphically (i.e every $g\in G$ induces a biholomorphic map $g:M\to M$).
Let $E$ be a holomorphic bundle over $M$ and also a $G$-equivariant bundle.
We say that $E$ is a  $G$-equivariant holomorphic bundle if every $g:E\to E$ is a biholomorphic map.
\end{de}

We also recall the definition of $G$-invariant Hermitian forms.
\begin{de}\label{def 1.4}
Let $M$ be a smooth manifold with a Lie group $G$ acting on it, and $E\to M$ is a $G$-equivariant vector bundle.
Let $h$ be a Hermitian form of $E$ (See Definition \ref{def 1.1}).
Then, we say that $h$ is a $G$-invariant Hermitian form if
\[
h_{gx}(g\cdot,g\cdot)=h_x(\cdot,\cdot).
\] 
\end{de}
Let $G$ be a Lie group with a compact subgroup $U(1)\subset G$, and $\mathbb{C}^\ast:=\mathbb{C}-{0}$.
Let $\chi:U(1)\to \mathbb{C}^\ast$ be a continuous (or equivalently, smooth) representation.
Then $k\in U(1)$ acts on $(g,v)\in G\times\mathbb{C}$ from the right as 
\[
 (g,v)\cdot k:=(gk,\chi^{-1}(k)v).
\]
We denote the quotient space by
\[
 G\times_\chi\mathbb{C}:=G\times \mathbb{C}/U(1).
\]
Then $G\times_\chi\mathbb{C}$ is a $G$-equivariant complex line bundle over $G/U(1)$.
Conversely, if $L$ is a  $G$-equivariant line bundle over $G/U(1)$, then there exists a continuous representation 
$\chi:U(1)\to\mathbb{C}^\ast$ such that 
\[
L\simeq G\times_\chi\mathbb{C}
\]
as $G$-equivariant vector bundles.
This follows from the following construction.
Let $e\in G$ be the unit and $L_e$ be the fibre of $L$ at $eU(1)$.
Since $U(1)$ stabilizes $eU(1)$, it acts on $L_e$ and hence defines a representation $\chi_L:U(1)\to GL(L_e)=\mathbb{C}^\ast$.
Then we obtain a line bundle $G\times_{\chi_L}L_e$ over $G/U(1)$.\par
We denote the equivalent class of $(g,v)\in G\times L_e$ in $G\times_{\chi_L}L_e$ as $[g,v]$.
Let $g_0\in G$ and $[g,v]\in G\times_{\chi_L}L_e$.
$G$ acts on $G\times_{\chi_L}L_e$ as 
\[
g_0\cdot [g,v]:=[g_0g,v].
\]
$G\times_{\chi_L}L_e$ becomes a $G$-equivariant line bundle with respect to this action.
We define a map 
\[
\Phi: G\times_{\chi_L}L_e\to L
\]
as 
\[
\Phi([g,v]):=g\cdot v.
\]
$\Phi$ is a smooth $G$-equivariant vector bundle isomorphism.
Hence, every $G$-equivariant line bundle over $G/U(1)$ is determined by a representation $\chi:U(1)\to\mathbb{C}^\ast$.
It is a classical fact that every continuous representation $\chi: U(1) \to \mathbb{C}^*$ is of the form
\[
\chi(e^{i\theta}) = e^{im\theta}
\]
for some integer $m \in \mathbb{Z}$ (See \cite{Ha}, for example).
The integer $m$ is called the weight of $\chi.$
We denote the weight $m$ representation as $\chi_m$.
Hence, every $G$-equivariant line bundle $L$ is determined by a weight $m$, which we call the weight of $L$.
We denote the $G$-equivariant line bundle of weight $m$ over $G/U(1)$ as $L_m$.\par
Let $L_n$ and $L_m$ be the weight $n$ and $m$ $G$-equivariant line bundle over $G/U(1)$.
From the construction of $L_n$ and $L_m$, the tensor product $L_n\otimes L_m$ is a $G$-equivariant line bundle with the weight $n+m$.
Let $L^\vee_n$ be the dual bundle of $L_n$. 
Then $L_n^\vee$ carries the canonical $G$-equivariant structure induced by that of $L_n$.
Moreover, since the dual representation has the inverse weight, $L_n^\vee$ has weight $-n$.
\par
We give a summary of the result by the discussion so far for our convenience.
We again note that the result is nothing new and is well-known.
\begin{lem}\label{lem 1.4}
Let $G$ be a Lie group with compact subgroup $U(1)\subset G$.
For every $m\in\mathbb{Z}$, we denote $\chi_m:U(1)\to \mathbb{C}^\ast$ to be the weight $m$ representation.\par
Let $L$ be a $G$-equivariant line bundle over $G/U(1)$.
Then, there exists an integral $m\in\mathbb{Z}$ such that 
\[
L\simeq G\times_{\chi_m}\mathbb{C}
\]
as $G$-equivariant line bundles.
We call $m$ the weight of $L$.\par
Let $L_n$ and $L_m$ be the $G$-equivariant bundle over $G/U(1)$ with weight $n$ and $m$.
Then the tensor product $L_n\otimes L_m$ is a $G$-equivariant bundle with the weight $n+m$.\par
Let $L^\vee_n$ be the dual bundle of $L_n$.
Then $L_n^\vee$ is a $G$-equivariant bundle with weight $-n$.
\end{lem}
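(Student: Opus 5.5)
The statement immediately above is Lemma \ref{lem 1.4}, which summarizes the preceding discussion. I will simply organize that discussion into three steps.

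\textbf{Step 1: every $G$-equivariant line bundle is homogeneous.} Let $L$ be a $G$-equivariant line bundle over $G/U(1)$, and let $L_e$ be the fibre over the base point $eU(1)$. Since $U(1)$ fixes $eU(1)$, condition (3) of Definition \ref{def 1.3} shows that each $k\in U(1)$ acts on $L_e$ by a linear automorphism. This gives a continuous homomorphism $\chi_L:U(1)\to GL(L_e)\cong\mathbb{C}^\ast$. Consider the map $\Phi([g,v])=g\cdot v$ from $G\times_{\chi_L}L_e$ to $L$.
\begin{itemize}
\item \emph{Well defined:} $[gk,\chi_L(k)^{-1}v]$ maps to $gk\cdot(k^{-1}\cdot v)=g\cdot v$, since $\chi_L(k)^{-1}v=k^{-1}\cdot v$.
\item \emph{Equivariant and fibrewise linear:} both follow from the definitions of the two actions.
\item \emph{Surjective:} $G$ acts transitively on $G/U(1)$.
\item \emph{Injective:} if $g\cdot v=g'\cdot v'$, then $g$ and $g'$ cover the same base point, so $g'=gk$ for some $k\in U(1)$. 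Hence $v=k\cdot v'$, and the two classes coincide.
\item \emph{Smooth with smooth inverse:} use a local section of $G\to G/U(1)$. Over its domain both bundles are trivialized, and $\Phi$ becomes a smooth family of nonzero scalars.
\end{itemize}

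\textbf{Step 2: the weight.} It remains to identify $\chi_L$ with some $\chi_m$. Since $U(1)$ is compact, its image in $\mathbb{C}^\ast$ is a compact subgroup. The map $|\chi_L|$ is a continuous homomorphism into $\mathbb{R}_{>0}$ with compact image, so it is trivial and $\chi_L$ lands in $U(1)$. A continuous endomorphism of $U(1)$ lifts through $\mathbb{R}\to U(1)$ to a continuous additive, hence linear, map $\theta\mapsto c\theta$. The condition that $2\pi$ maps into $2\pi\mathbb{Z}$ forces $c=m\in\mathbb{Z}$. This is the classical fact cited in the text \cite{Ha}.

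\textbf{Step 3: tensor products and duals.} The tensor product $L_n\otimes L_m$ carries the diagonal action, so $U(1)$ acts on the fibre at $eU(1)$ by $\chi_n\chi_m=\chi_{n+m}$. By Step 1 its weight is $n+m$. The dual $L_n^\vee$ carries the action $(g\cdot\xi)(v)=\xi(g^{-1}v)$, so the stabilizer acts on the fibre by $\chi_n^{-1}=\chi_{-n}$, and the weight is $-n$.

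The only point requiring care is the smoothness of $\Phi^{-1}$ in Step 1, which the local-section argument handles. The rest is formal.
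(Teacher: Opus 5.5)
Your proposal is correct and is essentially the paper's own argument: the paper also takes the isotropy character $\chi_L$ on the fibre $L_e$ over $eU(1)$, identifies $L$ with $G\times_{\chi_L}L_e$ via $\Phi([g,v])=g\cdot v$, invokes the classical fact $\chi(e^{i\theta})=e^{im\theta}$, and reads off the weights of $L_n\otimes L_m$ and $L_n^\vee$ from the induced isotropy representations. You additionally supply the details the paper leaves implicit: well-definedness, bijectivity and smoothness of $\Phi$ via local sections, and a proof of the classification of continuous characters of $U(1)$.
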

\par
We prepare a result for $G$-equivariant sections for later use.
We first recall its definition.
\begin{de}\label{def 1.5}
Let $M$ be a smooth manifold with a Lie group $G$ acting on it, and $E$ be a $G$-equivariant vector bundle over $M$.
We say that a section $s\in A(E)$ is $G$-equivariant if for every $g\in G$ and for every $x\in M$
\[
g\cdot s(x)=s(g\cdot x)
\]
holds.
We denote the space of $G$-equivariant sections of $E$ as $A(E)^G$.
\end{de}
\begin{lem}\label{lem 1.5}
Let $G$ be a Lie group with a compact subgroup $U(1)\subset G$, and  $L_m$ be the weight $m$ $G$-equivariant line bundle over $G/U(1)$.
Then 
\begin{itemize}
\item[(1)] If $m\neq 0$, then $A(L_m)^G$ only contains zero section.
\item[(2)] If $m=0$, then $A(L_0)\simeq \mathbb{C}$.
\end{itemize} 
\end{lem}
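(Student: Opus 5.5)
The plan is to identify a $G$-equivariant section of $L_m$ with a function on $G$ that satisfies an equivariance condition, and then use transitivity of the $G$-action on $G/U(1)$. By Lemma \ref{lem 1.4} we may take $L_m=G\times_{\chi_m}\mathbb{C}$, with $G$ acting by $g_0\cdot[g,v]=[g_0g,v]$.

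First, a section $s\in A(L_m)$ is the same thing as a smooth function $f:G\to\mathbb{C}$ with $s(gU(1))=[g,f(g)]$ and
\[
f(gk)=\chi_m(k)^{-1}f(g)\qquad (k\in U(1)),
\]
since $[gk,\chi_m(k)^{-1}v]=[g,v]$. I will impose the equivariance condition of Definition \ref{def 1.5} on $s$. It reads $g_0\cdot s(gU(1))=s(g_0gU(1))$, that is, $[g_0g,f(g)]=[g_0g,f(g_0g)]$. Hence $f(g_0g)=f(g)$ for all $g_0,g$, so $f$ is constant and equal to $c:=f(e)$. Conversely, any constant $c$ is compatible with this left-invariance.

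Next I combine the constant $f\equiv c$ with the $U(1)$-condition. This gives $c=\chi_m(k)^{-1}c=e^{-im\theta}c$ for all $k=e^{i\theta}$. If $m\neq0$, I choose $\theta$ with $e^{-im\theta}\neq1$, which forces $c=0$, and this proves (1). If $m=0$, the condition is vacuous, so every $c\in\mathbb{C}$ gives a well-defined smooth section $s_c(gU(1))=[g,c]$. The map $c\mapsto s_c$ is then a linear isomorphism $\mathbb{C}\to A(L_0)^G$, which is statement (2). Here I read the ``$A(L_0)$'' in the statement as $A(L_0)^G$.

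Geometrically, an equivariant section is determined by its value at the base point $eU(1)$, because $G$ acts transitively. That value must be fixed by the isotropy group $U(1)$, which acts on the fibre by $\chi_m$. The step that needs the most care is bookkeeping rather than a real obstacle. One must check that the identification $s\leftrightarrow f$ is compatible with the chosen convention for the right action $(g,v)\cdot k=(gk,\chi^{-1}(k)v)$, so that the sign of the weight does not matter for the conclusion. One must also check that the transport of $s$ through the isomorphism $\Phi$ of Lemma \ref{lem 1.4} preserves equivariance of sections; this holds because $\Phi$ is $G$-equivariant.
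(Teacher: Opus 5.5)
Your proof is correct and complete. The paper gives no argument of its own here, only a pointer to Lemma~3.2 of the author's earlier paper, and your argument supplies the standard reasoning: an equivariant section is determined, by transitivity, by its value at $eU(1)$, and that value must be fixed by the isotropy $U(1)$, which acts on the fibre through $\chi_m$. Your reading of (2) as $A(L_0)^G\simeq\mathbb{C}$ is the intended statement; the literal $A(L_0)$ is evidently a typo.
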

\begin{proof}
See Lemma 3.2 of \cite{Ono}.
\end{proof}
We also prepare a result for $G$-invariant Hermitian forms.
\begin{lem}\label{lem 1.6}
Let $G$ be a Lie group with a compact subgroup $U(1)\subset G$, and  $L_m$ be the weight $m$ $G$-equivariant line bundle over $G/U(1)$.
If there exists a $G$-invariant pseudo-Hermitian metric on $L_m$, then it is unique up to a non-zero real constant.
\end{lem}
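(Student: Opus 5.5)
Suppose $h$ and $h'$ are two $G$-invariant pseudo-Hermitian metrics on $L_m$. The plan is to compare them through their ratio and then apply the rigidity of equivariant sections in Lemma \ref{lem 1.5}.

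\textbf{Step 1: the ratio is a nowhere vanishing real function.} Since $L_m$ has rank one, each fibre carries a one-dimensional space of Hermitian forms. A Hermitian form on a complex line $\ell$ is determined by the real number $h(v,v)$ for any nonzero $v\in\ell$, and it is non-degenerate exactly when this number is nonzero. Therefore there is a unique function $f:G/U(1)\to\mathbb{R}\setminus\{0\}$ with $h'=f\,h$. Locally, choose a nonvanishing smooth section $s$; then
\[
f=\frac{h'(s,s)}{h(s,s)},
\]
so $f$ is smooth.

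\textbf{Step 2: the ratio is $G$-invariant.} Take $x\in G/U(1)$, $g\in G$, and $0\neq v\in (L_m)_x$. Since $g$ acts as a linear isomorphism $(L_m)_x\to(L_m)_{gx}$, the vector $gv$ is nonzero. Invariance of both metrics gives
\[
f(gx)\,h_{gx}(gv,gv)=h'_{gx}(gv,gv)=h'_x(v,v)=f(x)\,h_x(v,v)=f(x)\,h_{gx}(gv,gv).
\]
Because $h_{gx}(gv,gv)\neq0$, we get $f(gx)=f(x)$.

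\textbf{Step 3: conclude that $f$ is constant.} The action of $G$ on $G/U(1)$ is transitive, so $f(gU(1))=f(eU(1))$ for all $g$, and $f$ is a nonzero real constant. Alternatively, $f$ is a $G$-equivariant section of the trivial bundle $L_0$, and part (2) of Lemma \ref{lem 1.5} shows directly that it is constant. Either way, $h'=c\,h$ with $c\in\mathbb{R}\setminus\{0\}$.

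There is no genuine obstacle here. The only points needing care are that $h'/h$ is well defined and real-valued, which uses rank one and non-degeneracy, and that the sign of $c$ may be negative, consistent with the statement allowing any nonzero real constant. If one also wants to address existence implicitly, note that an invariant form is determined by its value at $eU(1)$, and that value must be $U(1)$-invariant; this holds automatically because $|\chi_m|=1$. Only uniqueness is needed here.
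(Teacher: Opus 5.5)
Your proof is correct and complete. The ratio $h'/h$ is a well-defined, nowhere-vanishing, real-valued $G$-invariant function, so it is constant because $G$ acts transitively on $G/U(1)$ (equivalently, by Lemma~\ref{lem 1.5}(2) for $L_0$). The paper gives no argument of its own and only cites Lemma 3.3 of \cite{Ono}, so I cannot compare against its details, but your ratio-plus-transitivity argument is the standard route and fits the paper's use of Lemma~\ref{lem 1.5}.
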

\begin{proof}
See Lemma 3.3 of \cite{Ono}.
\end{proof}
\section{$(C_0,C_1)=(\pm 1,1)$ Cases}\label{sec 2}
\subsection{$SU(2)$-equivariant line bundle over $\mathbb{P}^1$}\label{sec 2.1}
In this section, we review the $SU(2)$-action on $\mathbb{P}^1$ and $\mathcal{O}(n)$.\par
Let $M_2(\mathbb{C})$ be the set of $2\times 2$ complex valued matrixes.
 Recall that 
\begin{equation*}
SU(2)=\bigg\{
\begin{pmatrix}
a&-\overline{b}\\
b&\overline{a}
\end{pmatrix}\in M_2(\mathbb{C})
\,\,
\bigg|
\,\,
 |a|^2+|b|^2=1\bigg\}.
\end{equation*}
$SU(2)$ acts on $\mathbb{P}^1$ as 
\begin{equation*}
\begin{array}{rccc}
& SU(2)\times\mathbb{P}^1  &\longrightarrow& \mathbb{P}^1                     \\
        & \rotatebox{90}{$\in$}&               & \rotatebox{90}{$\in$} \\
        &\bigg( \begin{pmatrix}
a&-\overline{b}\\
b&\overline{a}
\end{pmatrix}   , [z_0:z_1]   \bigg)              & \longmapsto   &  [az_0-\overline{b}z_1:bz_0+\overline{a}z_1].
\end{array}
\end{equation*}
It is well known that this action is transitive and we can regard $\mathbb{P}^1$ as a homogeneous space $SU(2)/U(1)$.\par
We now regard $\mathbb{P}^1$ as two copies of the complex plane $\mathbb{C}_z$ and $\mathbb{C}_w$ glued with the relation $w=1/z$.
Then the action of $g= \begin{pmatrix}
a&-\overline{b}\\
b&\overline{a}
\end{pmatrix}$
on $\mathbb{P}^1$ can be realized as 
$z\mapsto\frac{az-\overline{b}}{bz+\overline{a}}$.\par
Let $\mathbb{C}_z\times\mathbb{C}\to\mathbb{C}_z$ and $\mathbb{C}_w\times\mathbb{C}\to\mathbb{C}_w$ be the trivial line bundle on $\mathbb{C}_z$ and $\mathbb{C}_w$.
Let $e_z$ and $e_w$ be the canonical global sections of them. 
The line bundle $\mathcal{O}(-1)$ is obtained by gluing the two trivial line bundles by the relation $e_w=\frac{e_z}{z}$.
$\mathcal{O}(-1)$ is called the tautological bundle.
From the construction, $\mathcal{O}(-1)$ is a holomorphic bundle, and we denote its Dolbeault operator as $\overline\partial_{\mathcal{O}(-1)}$.
\par
Since $\mathcal{O}(-1)$ is also a sub-bundle of the rank 2 trivial bundle $\mathbb{P}^1\times \mathbb{C}^2\to\mathbb{P}^1$,
the standard action of $SU(2)$ on $\mathbb{C}^2$ induces an $SU(2)$-action on $\mathcal{O}(-1)$ which is compatible with the $SU(2)$-action on $\mathbb{P}^1$. 
We write down the action explicitly.  
For each 
$g= \begin{pmatrix}
a&-\overline{b}\\
b&\overline{a}
\end{pmatrix}$,
it induces a map $\Psi_g:\mathcal{O}(-1)\to \mathcal{O}(-1)$ such that 
\[
\Psi_g(e_z)=(bz+\overline{a})e_{g\cdot z}.
\]
We define a Hermitian metric $h^{(-1)}$ on $\mathcal{O}(-1)$ such that 
\[
h^{(-1)}(e_z,e_z):=1+|z|^2, h^{(-1)}(e_w,e_w):=1+|w|^2.
\]
This metric is $SU(2)$-invariant: $h^{(-1)}(\Psi_g\cdot,\Psi_g\cdot)=h^{(-1)}(\cdot,\cdot)$ holds for every $g\in SU(2)$.\par
We define $\mathcal{O}(1)$ to be the dual bundle of $\mathcal{O}(-1)$. 
For every $n\in\mathbb{Z}_{\geq 0}$, we define $\mathcal{O}(n):=\mathcal{O}(1)^{\otimes n}$ and $\mathcal{O}(-n):=\mathcal{O}(-1)^{\otimes n}$. 
Each $\mathcal{O}(n)\, (n\in\mathbb{Z})$ has a holomorphic bundle structure, and has an $SU(2)$-action both of which are induced by $\mathcal{O}(-1)$.
We denote by $\bar\partial_{\mathcal{O}(n)}$ the holomorphic bundle structure of $\mathcal{O}(n).$
We note that $(\mathcal{O}(n),\overline\partial_{\mathcal{O}(n)})$ is an $SU(2)$-equivariant holomorphic bundle.\par
The metric $h^{(-1)}$ induces a Hermitian metric $h^{(n)}$ on $\mathcal{O}(n)$ and therefore it is $SU(2)$-invariant.
We note that $h^{(2)}$ is the Fubini-Study metric.\par
For later use, we recall the $SU(2)$-invariant differential form valued on $\mathcal{O}(-2)$ and $\mathcal{O}(2)$.
Before we proceed, we prepare some notations. 
Let $T^{1,0\ast}\mathbb{P}^1$ be the holomorphic cotangent bundle on $\mathbb{P}^1$. 
Note that $\mathcal{O}(-2)\simeq T^{1,0\ast}\mathbb{P}^1$ as holomorphic bundles.
Let $T^{0,1\ast}\mathbb{P}^1$ be the anti-holomorphic cotangent bundle.  
\begin{lem}\label{lem 3.4}
\begin{equation*}
T^{0,1\ast}\mathbb{P}^1\simeq \mathcal{O}(2)
\end{equation*}
as smooth bundles.
\end{lem}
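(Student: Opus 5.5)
The plan is to exhibit an explicit isomorphism of smooth complex line bundles. Complex conjugation does this. On a Riemann surface, conjugation sends a $(1,0)$-form to a $(0,1)$-form, so it gives a fibrewise conjugate-linear bundle isomorphism
\[
T^{1,0\ast}\mathbb{P}^1\to T^{0,1\ast}\mathbb{P}^1,\qquad \alpha\mapsto\overline{\alpha}.
\]
Hence $T^{0,1\ast}\mathbb{P}^1\simeq \overline{T^{1,0\ast}\mathbb{P}^1}\simeq\overline{\mathcal{O}(-2)}$ as smooth complex line bundles, using $\mathcal{O}(-2)\simeq T^{1,0\ast}\mathbb{P}^1$ as noted above. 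A Hermitian metric $h$ on any line bundle $L$ identifies $\overline{L}$ with $L^\vee$ through $\bar v\mapsto h(\cdot,v)$. Applying this with $h^{(-2)}$ gives $\overline{\mathcal{O}(-2)}\simeq\mathcal{O}(-2)^\vee=\mathcal{O}(2)$.

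For later use it is convenient to write the map in the trivializations $\mathbb{C}_z,\mathbb{C}_w$ and check the gluing directly. The frame $d\bar z$ of $T^{0,1\ast}$ on $\mathbb{C}_z$ satisfies $d\bar w=-\bar z^{-2}d\bar z$ on the overlap. For $\mathcal{O}(2)$, dualizing $e_w=e_z/z$ twice gives frames $e_z^{\ast 2},e_w^{\ast 2}$ with $e_w^{\ast2}=z^2e_z^{\ast2}$. I will define the map by
\[
d\bar z\mapsto \frac{e_z^{\ast2}}{(1+|z|^2)^2},\qquad d\bar w\mapsto -\frac{e_w^{\ast2}}{(1+|w|^2)^2}.
\]
The remaining step is to verify that these local definitions agree on the overlap, using $1+|w|^2=(1+|z|^2)/|z|^2$:
\[
-\frac{e_w^{\ast2}}{(1+|w|^2)^2}=-\frac{|z|^4 z^2 e_z^{\ast2}}{(1+|z|^2)^2}.
\]
This should equal the image of $d\bar w=-\bar z^{-2}d\bar z$, namely $-\bar z^{-2}e_z^{\ast2}/(1+|z|^2)^2$. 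Since $|z|^4z^2=\bar z^2 z^4$, which differs from $\bar z^{-2}$, the factors do not match. The correct normalization is the one that produces $\bar z^{-2}$, and this is exactly what the metric-dual map supplies. Indeed, $h^{(-2)}(e_z^{2},e_z^{2})=(1+|z|^2)^2$, and $d\bar z$ corresponds to $\overline{dz}$ with $dz\leftrightarrow e_z^2$ up to a sign convention. So I will simply take the intrinsic map $\alpha\mapsto h^{(-2)}(\cdot,\alpha)$ composed with conjugation. This map is well defined by construction, and the explicit formula then reads $d\bar z\mapsto(1+|z|^2)^2e_z^{\ast2}$.

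The only delicate point is this bookkeeping of frames and signs. Well-definedness is automatic for the intrinsic composition, so no genuine obstacle arises. It also follows from this description that the isomorphism is $SU(2)$-equivariant, because $h^{(-2)}$ is $SU(2)$-invariant and conjugation commutes with pullback by holomorphic maps. That equivariance is presumably what the later sections use.
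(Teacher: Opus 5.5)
Your proof is correct. Conjugation gives a complex-linear isomorphism $\overline{T^{1,0\ast}\mathbb{P}^1}\to T^{0,1\ast}\mathbb{P}^1$, and $\bar v\mapsto h^{(-2)}(\cdot,v)$ identifies $\overline{\mathcal{O}(-2)}$ with $\mathcal{O}(2)$, so the composite is a well-defined smooth isomorphism.

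Your final formula $d\bar z\mapsto(1+|z|^2)^2e_z^{\ast2}$ is right. In the other chart it reads $d\bar w\mapsto-(1+|w|^2)^2e_w^{\ast2}$, which is compatible with $d\bar w=-\bar z^{-2}d\bar z$ and $e_w^{\ast2}=z^2e_z^{\ast2}$. Delete the first trial formula. Its factor $(1+|z|^2)^{-2}$ belongs to the inverse map, and it is exactly the coefficient in the paper's $\alpha$: under your isomorphism, $\alpha=(1+|z|^2)^{-2}d\bar z\otimes e_{-2,z}$ goes to the constant section $1$ of $\mathcal{O}(2)\otimes\mathcal{O}(-2)$. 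So your map is precisely the trivialization that $\alpha$ encodes.

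The paper gives no argument here and defers to \cite[Proposition 3.1]{Ono}. For the parallel identifications $T^{0,1\ast}\Delta\simeq M_2$ and $T^{0,1\ast}\mathbb{C}\simeq M_1$ (Sections \ref{sec 3.1} and \ref{sec 4.1}), it argues representation-theoretically. It conjugates the action on $T^{1,0\ast}$ and reads off the isotropy $U(1)$-weight; here that weight is $+2$, the negative of the weight $-2$ of $T^{1,0\ast}\mathbb{P}^1$. It then uses that equivariant line bundles on $G/U(1)$ are classified by weight (Lemma \ref{lem 1.4}). That route gives the $SU(2)$-equivariance used later for free and treats the three homogeneous models uniformly, but it relies on the homogeneous structure. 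Yours is elementary and works on any Riemann surface, giving $T^{0,1\ast}X\simeq (T^{1,0\ast}X)^\vee$ via any Hermitian metric. It also yields the explicit formula.

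For the equivariance you claim at the end, you additionally need the identification $T^{1,0\ast}\mathbb{P}^1\simeq\mathcal{O}(-2)$, $dz\leftrightarrow e_{-2,z}$, to be $SU(2)$-equivariant. This holds because $dz$ and $e_{-2,z}$ both transform by the factor $(bz+\bar a)^2$.
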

\begin{proof}
Although this is well-known, see \cite[Proposition 3.1]{Ono} for the proof, for example.
\end{proof}
We denote the space of smooth $SU(2)$-invariant sections of $\mathcal{O}(n)$ as $A(\mathcal{O}(n))^{SU(2)}$, and the space of smooth $SU(2)$-invariant $\mathcal{O}(n)$-valued 1-forms as $A^1(\mathcal{O}(n))^{SU(2)}$.
\begin{prop}
\begin{itemize}
\item[(1)] For $n\neq 0$, $A(\mathcal{O}(n))^{SU(2)}$ only contains zero section. 
\item[(2)] $\mathrm{dim}A^1(\mathcal{O}(-2))^{SU(2)}=\mathrm{dim}A^1(\mathcal{O}(2))^{SU(2)}=1$. 
In particular, let $\alpha$ and $\beta$ be the basis of $A^1(\mathcal{O}(-2))^{SU(2)}$ and $A^1(\mathcal{O}(2))^{SU(2)}$ respectively. 
Then $\alpha\in A^{0,1}(\mathcal{O}(-2))$ and $\beta\in A^{1,0}(\mathcal{O}(2)).$
\end{itemize}
\end{prop}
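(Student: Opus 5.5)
We will prove the proposition by reducing both statements to the description of $SU(2)$-equivariant line bundles over $\mathbb{P}^1=SU(2)/U(1)$ by their weights (Lemma \ref{lem 1.4}), together with the classification of invariant sections (Lemma \ref{lem 1.5}). Here $U(1)\subset SU(2)$ is the diagonal subgroup $\mathrm{diag}(e^{i\theta},e^{-i\theta})$, which is the stabilizer of $z=0$. Lemma \ref{lem 1.5} is stated for the $G$ treated in \cite{Ono}, but its proof only uses transitivity and the compactness of the stabilizer, so it applies verbatim to $G=SU(2)$.

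For (1), we compute the weight of $\mathcal{O}(-1)$ at $z=0$. For $g=\mathrm{diag}(e^{i\theta},e^{-i\theta})$ we have $a=e^{i\theta}$ and $b=0$, so $\Psi_g(e_0)=\overline{a}\,e_0=e^{-i\theta}e_0$. Hence $\mathcal{O}(-1)$ has weight $-1$, and by the tensor and dual statements of Lemma \ref{lem 1.4}, $\mathcal{O}(n)$ has weight $-n$. This weight is nonzero for $n\neq0$, so Lemma \ref{lem 1.5}(1) gives (1). (Under the paper's sign convention the weight could come out as $n$ rather than $-n$; only its non-vanishing matters.)

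For (2), we decompose $T^\ast\mathbb{P}^1\otimes\mathbb{C}=T^{1,0\ast}\oplus T^{0,1\ast}$. The action is holomorphic, so this splitting is $SU(2)$-invariant, and it induces a splitting
\[
A^1(\mathcal{O}(n))^{SU(2)}=A^{1,0}(\mathcal{O}(n))^{SU(2)}\oplus A^{0,1}(\mathcal{O}(n))^{SU(2)}.
\]
We have $T^{1,0\ast}\simeq\mathcal{O}(-2)$ holomorphically, and by Lemma \ref{lem 3.4} $T^{0,1\ast}\simeq\mathcal{O}(2)$. These identifications must be checked to be $SU(2)$-equivariant, which we do by computing weights directly. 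At $z=0$ the differential of $g$ acts as
\[
\frac{d}{dz}\,\frac{az-\overline{b}}{bz+\overline{a}}\bigg|_{z=0}=a^2=e^{2i\theta}.
\]
So $dz$ transforms with weight $\pm2$, and $d\bar z$ transforms with the opposite weight $\mp2$. Comparing with the weight $-1$ of $\mathcal{O}(-1)$ fixes the signs: $T^{1,0\ast}$ has the weight of $\mathcal{O}(-2)$ and $T^{0,1\ast}$ has the weight of $\mathcal{O}(2)$. Then, with $\mathcal{O}(n)$ of weight $-n$, the two bundles have weights
\[
T^{1,0\ast}\otimes\mathcal{O}(-2):\ 4,\qquad T^{0,1\ast}\otimes\mathcal{O}(-2):\ 0.
\]
By Lemma \ref{lem 1.5}, invariant $(1,0)$-forms vanish and the invariant $(0,1)$-forms form a space $\simeq\mathbb{C}$. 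Therefore $\dim A^1(\mathcal{O}(-2))^{SU(2)}=1$ with basis $\alpha\in A^{0,1}(\mathcal{O}(-2))$.

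The case of $\mathcal{O}(2)$ is symmetric: $T^{1,0\ast}\otimes\mathcal{O}(2)$ has weight $0$ and $T^{0,1\ast}\otimes\mathcal{O}(2)$ has weight $-4$. This gives $\dim A^1(\mathcal{O}(2))^{SU(2)}=1$ with basis $\beta\in A^{1,0}(\mathcal{O}(2))$. Explicitly, one expects $\alpha=\frac{d\bar z}{1+|z|^2}\otimes e_z^{2}$ up to normalization, and $\beta$ to be its $h$-dual counterpart; invariance can be double-checked using $\Psi_g$.

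The main obstacle is the sign bookkeeping, namely making sure the weights of $dz$ and $d\bar z$ are correctly matched against the weight convention used for $\mathcal{O}(-1)$. Everything else follows formally from Lemmas \ref{lem 1.4} and \ref{lem 1.5}. We settle the signs by the direct computation at the fixed point $z=0$ above, rather than relying on abstract isomorphisms.
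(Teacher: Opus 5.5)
Your route (split $A^1(\mathcal{O}(n))$ into $(1,0)$ and $(0,1)$ parts, view each as sections of an equivariant line bundle, apply Lemma \ref{lem 1.5}) is the one the paper relies on. It defers to Proposition 3.2 of the author's earlier work, and runs exactly this weight computation for $SU(1,1)$ and $SE(2)$ in Sections \ref{sec 3.1} and \ref{sec 4.1}. Part (1) is fine.

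In part (2), however, the one step with real content is asserted rather than proved. Whether $\alpha$ is of type $(0,1)$ and $\beta$ of type $(1,0)$, rather than the reverse, depends entirely on the \emph{relative} sign between the weight of $dz$ and the weight of $\mathcal{O}(-1)$. You leave the weight of $dz$ as ``$\pm2$'' and say that comparing with the weight $-1$ of $\mathcal{O}(-1)$ fixes it. That comparison carries no information, since the action on $\mathcal{O}(-1)$ says nothing about how covectors transform. In addition, ``$\mathcal{O}(n)$ has weight $-n$'' contradicts your own computation. If $\mathcal{O}(-1)$ has weight $-1$, Lemma \ref{lem 1.4} gives $\mathcal{O}(n)$ weight $n$, so the nonzero weights you list ($4$ and $-4$) have the wrong signs. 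Your zero/non-zero pattern comes out right only because you phrased the cotangent weights as ``the weight of $\mathcal{O}(\mp2)$''.

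The fix is short. A form $\sigma\in A^1(\mathcal{O}(n))$ is invariant iff $\Psi_g\circ\sigma_x=\sigma_{g\cdot x}\circ(dg)_x$. So covectors transform by $\xi\mapsto\xi\circ(dg)_x^{-1}$, i.e.\ $g\cdot(dz)_z=(bz+\bar a)^2(dz)_{g\cdot z}$. For $k_\theta=\mathrm{diag}(e^{i\theta},e^{-i\theta})$ you found $(dk_\theta)_0=e^{2i\theta}$ on $T^{1,0}_0$. Hence $dz|_0$ has weight $-2$ and $d\bar z|_0$ has weight $+2$, which are exactly the weights of $\mathcal{O}(-2)$ and $\mathcal{O}(2)$. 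The bundles $T^{1,0\ast}\otimes\mathcal{O}(-2)$, $T^{0,1\ast}\otimes\mathcal{O}(-2)$, $T^{1,0\ast}\otimes\mathcal{O}(2)$, $T^{0,1\ast}\otimes\mathcal{O}(2)$ then have weights $-4,0,0,4$.

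A convention-free alternative is also available. By Lemma \ref{lem 1.4}, every equivariant line bundle on $\mathbb{P}^1$ is equivariantly isomorphic to the $\mathcal{O}(m)$ of the same weight, so its weight is determined by its degree. Since $T^{1,0\ast}\mathbb{P}^1$ has degree $-2$, it carries the weight of $\mathcal{O}(-2)$.

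Finally, your explicit guess for $\alpha$ is wrong, not merely unnormalized. Invariance of $f\,d\bar z\otimes e_{-2,z}$ forces $f(g\cdot z)=|bz+\bar a|^4f(z)$. Since $1+|g\cdot z|^2=(1+|z|^2)/|bz+\bar a|^2$, this gives $f=(1+|z|^2)^{-2}$, as in the paper. Your $(1+|z|^2)^{-1}$ is not invariant and does not even extend smoothly over $z=\infty$.
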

\begin{proof}
See \cite[Proposition 3.2]{Ono}.
\end{proof}
We give explicit realizations $\alpha$ and $\beta$. 
Let $\mathbb{C}_z\times\mathbb{C}\to\mathbb{C}_z$ and $\mathbb{C}_w\times\mathbb{C}\to\mathbb{C}_w$ be the trivial line bundle on $\mathbb{C}_z$ and $\mathbb{C}_w$.
Let $e_{n,z}$ and $e_{n,w}$ be the canonical global sections.
The line bundle $\mathcal{O}(n)$ is obtained by gluing these two trivial line bundles via $e_{n,w}=z^ne_{n,z}$. 
Then, $\alpha$ and $\beta$ are explicitly given by
\begin{align*}
&\alpha|_{\mathbb{C}_z}=\frac{1}{(1+|z|^2)^2}d\overline{z}\otimes e_{-2,z},\,\,\alpha|_{\mathbb{C}_w}=-\frac{1}{(1+|w|^2)^2}d\overline{w}\otimes e_{-2,w}.\\
&\beta|_{\mathbb{C}_z}=dz\otimes e_{2,z},\,\,\beta|_{\mathbb{C}_w}=-dw\otimes e_{2,w}.
\end{align*}
Since $\alpha$ and $\beta$ are everywhere non-zero and $SU(2)$-invariant, they define non-zero elements of $A^1(\mathcal{O}(-2))^{SU(2)}$ and $A^1(\mathcal{O}(2))^{SU(2)}$, respectively. 
Hence, since both spaces are one-dimensional, $\alpha$ and $\beta$ form bases of the respective spaces.
\par
We denote by $\omega_{\mathrm{FS}}$ the Fubini-Study K\"ahler form on $\mathbb{P}^1$.
Recall that $\omega_{\mathrm{FS}}$ is given on $\mathbb{C}_z$ and $\mathbb{C}_w$ by
\[
\omega_{\mathrm{FS}}|_{\mathbb{C}_z}=\frac{\sqrt{-1}}{2\pi}\frac{dz\wedge d\bar{z}}{(1+|z|^2)^2}\qquad\text{and}\qquad\omega_{\mathrm{FS}}|_{\mathbb{C}_w}=\frac{\sqrt{-1}}{2\pi}\frac{dw\wedge d\bar{w}}{(1+|w|^2)^2},
\]
respectively.
Hence 
\be\label{eq 2.1}
\alpha\wedge\beta=2\pi\sqrt{-1}\omega_{\mathrm{FS}}.
\ee
Let $\nabla_{h^{(n)}}$ be the Chern connection of $\mathcal{O}(n)$ associated with $\bar\partial_{\mathcal{O}(n)}$ and $h^{(n)}.$
We denote by $F_{h^{(n})}$ the curvature of $\nabla_{h^{(n)}}$.
A straightforward calculation shows that 
\be\label{eq 2.2}
F_{h^{(n)}}=-2\pi\sqrt{-1}n\omega_{\mathrm{FS}}.
\ee
We will use \eqref{eq 2.1} and \eqref{eq 2.2} later.

\subsection{From Vortex Equations to ASD connections}\label{sec 2.2}
Let $X$ be a Riemann surface with the K\"ahler form $\omega_X$.
We fix this notation throughout this section.
We note that we do not assume $X$ to be compact.\par
Let $(L,\bar\partial_L)$ be a holomorphic line bundle over $X$. 
We assume the following:
\be\label{eq 2.3}
\text{\textbf{Assumption:} The bundle $(L,\bar\partial_L)$ admits a square root $(L^{1/2},\bar\partial_{L^{1/2}})$.}
\ee
\begin{remark}
We note that this assumption always holds if $X$ is non-compact since every holomorphic bundle over a non-compact Riemann surface is a trivial holomorphic bundle.
\par
If $X$ is compact, then a holomorphic line bundle admits a square root if and only if its degree is even.
Hence, when $X$ is compact, we assume the degree of $L$ is even.
\end{remark}
We now fix a constant $\tau>0$.
Let $h_L$ be a Hermitian metric of $L$ and $\phi$ be a holomorphic section of $(L,\bar\partial_L)$.
Suppose that the triple  $((L,\bar\partial_L),\phi,h_L)$ satisfies Manton's five $\tau$-vortex equation with $(C_0,C_1)=(1,1)$ or $(C_0,C_1)=(-1,1)$, which is equivalent to 
\be\label{eq 2.4}
\sqrt{-1}\Lambda_{\omega_X}F_{h_L}+C_0|\phi|^2_{h_L}-\tau=0,\quad C_0=1\,\,\mathrm{or}\,\,C_0=-1.
\ee
We construct an ASD connection on $X\times\mathbb{P}^1$ from the triple.
Let
\begin{align*}
&p:X\times \mathbb{P}^1\to X\\
&q:X\times \mathbb{P}^1\to \mathbb{P}^1
\end{align*}
 be the natural projections.
 We consider the action of $SU(2)$ on $X\times \mathbb{P}^1$ which is trivial on $X$ and standard on $\mathbb{P}^1$.
 We define a bundle by
 \begin{align*}
 E:&=p^\ast L^{1/2}\otimes q^\ast \mathcal{O}(-1)\oplus p^\ast L^{-1/2}\otimes q^\ast\mathcal{O}(1).
 \end{align*}
 Here $L^{-1/2}$ is the dual of $L^{1/2}$.
 We denote its holomorphic structure by $\bar\partial_{L^{-1/2}}$.
 The $SU(2)$-action on $\mathcal{O}(-1)$ and $\mathcal{O}(1)$ induces an $SU(2)$-action on $E$, where $SU(2)$ acts trivially on the factors pulled back from $X$.
By the definition of the action of $SU(2)$ on $X\times \mathbb{P}^1$, $E$ is also an $SU(2)$-equivariant bundle over $X\times \mathbb{P}^1$.
\par
 We define an operator on $E$ as 
\be\label{eq 2.5}
\bar\partial_E:=
\begin{pmatrix}
p^\ast\bar\partial_{L^{1/2}}\otimes \mathrm{Id}+\mathrm{Id}\otimes q^\ast\bar\partial_{\mathcal{O}(-1)}&\sqrt{\frac1\tau}\cdot p^\ast\phi\otimes q^\ast\alpha\\
0&p^\ast\bar\partial_{L^{-1/2}}\otimes \mathrm{Id}+\mathrm{Id}\otimes q^\ast\bar\partial_{\mathcal{O}(1)}
\end{pmatrix}.
\ee
Here $\alpha$ is the differential form defined in Section \ref{sec 2.1}.
Note that 
\[
p^\ast\phi\otimes q^\ast\alpha\in A^{0,1}\bigg(\mathrm{Hom}(p^\ast L^{-1/2}\otimes \mathcal{O}(1), p^\ast L^{1/2}\otimes q^\ast \mathcal{O}(-1))\bigg)=A^{0,1}(p^\ast L\otimes q^\ast\mathcal{O}(-2)).
\]
The following proposition can be proved by using the same argument as in \cite[Proposition 3.3]{Ono}.
\begin{prop}\label{prop 2.2}
$(E,\bar\partial_E)$ is an $SU(2)$-equivariant holomorphic bundle over $X\times \mathbb{P}^1$.
\end{prop}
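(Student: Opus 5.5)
Two things must be checked: that $\bar\partial_E$ is integrable, i.e.\ $\bar\partial_E^2=0$, so that it defines a holomorphic structure on $E$; and that the $SU(2)$-action on $E$ commutes with $\bar\partial_E$, so that each $g\in SU(2)$ acts biholomorphically. Write $\bar\partial_E=D_0+\Theta$, where $D_0$ is the diagonal part and $\Theta$ is the off-diagonal term $\sqrt{1/\tau}\,p^\ast\phi\otimes q^\ast\alpha$.

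For integrability, first note that the diagonal entries are tensor products of pulled-back holomorphic structures, so $D_0^2=0$. Expanding,
\[
\bar\partial_E^2=D_0^2+(D_0\Theta+\Theta D_0)+\Theta^2 .
\]
Since $\Theta$ is strictly upper triangular, $\Theta^2=0$. The cross term equals $\bar\partial(p^\ast\phi\otimes q^\ast\alpha)$, computed with the induced holomorphic structure on $\mathrm{Hom}=p^\ast L\otimes q^\ast\mathcal{O}(-2)$. By the Leibniz rule this is
\[
\bar\partial(p^\ast\phi)\wedge q^\ast\alpha \pm p^\ast\phi\, q^\ast(\bar\partial_{\mathcal{O}(-2)}\alpha).
\]
The first term vanishes because $\phi$ is holomorphic. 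The second vanishes because $\bar\partial_{\mathcal{O}(-2)}\alpha$ is an $\mathcal{O}(-2)$-valued $(0,2)$-form on the curve $\mathbb{P}^1$, hence zero. The explicit formula for $\alpha$ confirms this directly.

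For equivariance, the $SU(2)$-action on $E$ is trivial on the $X$-factors and is the natural action on $\mathcal{O}(\mp1)$. Because $\mathcal{O}(\pm1)$ are $SU(2)$-equivariant holomorphic bundles and $SU(2)$ acts on $X\times\mathbb{P}^1$ by biholomorphisms, the diagonal operator satisfies $g^\ast D_0=D_0$. For the off-diagonal term, $p^\ast\phi$ is invariant since the action on $X$ is trivial. Also $\alpha\in A^1(\mathcal{O}(-2))^{SU(2)}$ by the preceding proposition, and this is exactly the invariance of the induced action on $\mathrm{Hom}(q^\ast\mathcal{O}(1),q^\ast\mathcal{O}(-2)\otimes q^\ast\mathcal{O}(1))$. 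Hence $g^\ast\Theta=\Theta$ and $g\circ\bar\partial_E=\bar\partial_E\circ g$, so each $g$ maps local holomorphic sections to holomorphic sections and is biholomorphic on $E$.

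I expect the main subtlety to be bookkeeping rather than a genuine obstacle. One has to verify that the $SU(2)$-action induced on $\mathrm{Hom}$ matches the one under which $\alpha$ was declared invariant, namely the action on $\mathcal{O}(-1)\otimes\mathcal{O}(1)^\vee=\mathcal{O}(-2)$. I would check this in the chart $\mathbb{C}_z$ using $\Psi_g(e_z)=(bz+\bar a)e_{g\cdot z}$ together with the explicit form of $\alpha$, exactly as in \cite[Proposition 3.3]{Ono}.
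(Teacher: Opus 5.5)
Your proposal is correct and takes essentially the same approach as the paper, which simply defers to the argument of \cite[Proposition 3.3]{Ono}. That argument has the same two parts as yours: integrability $\bar\partial_E^2=0$, which follows from the holomorphicity of $\phi$ and the vanishing of $(0,2)$-forms on $\mathbb{P}^1$, and equivariance, which follows from the $SU(2)$-invariance of $\alpha\in A^{0,1}(\mathcal{O}(-2))$ together with the trivial action on the factors pulled back from $X$.
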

We next define a pseudo-Hermitian metric on $E$ as 
\be\label{eq 2.6}
h_{E,\epsilon}:=
\begin{pmatrix}
p^\ast h^{1/2}_L\otimes q^\ast h^{(-1)}&0\\
0&\epsilon\cdot p^\ast h^{-1/2}_L\otimes q^\ast h^{(1)}
\end{pmatrix},
\quad \epsilon\in\{-1,1\}.
\ee
Here, $h^{1/2}_L$ and $h^{-1/2}_L$ are the Hermitian metrics of $L^{1/2}$ and $L^{-1/2}$ induced by $h_L$.
By the definition of the $SU(2)$ action on $E$, $h_{E,\epsilon}$ is an $SU(2)$-invariant Hermitian form.
We note that $h_{E,\epsilon}$ is a Hermitian metric if $\epsilon=1$ and is a pseudo-Hermitian metric of signature $(1,1)$ if $\epsilon=-1.$
\par
Let $\nabla_{h_E,\epsilon}$ be the unique connection satisfying $\nabla^{0,1}_{h_E,\epsilon}=\bar\partial_E$ and compatible with $h_{E,\epsilon}$.
We note that $\nabla_{h_E,\epsilon}$ is the Chern connection if $\epsilon=1$ and is the connection constructed in Proposition \ref{prop 1.1} if $\epsilon=-1.$
We denote its $(1,0)$-part by $\partial_{h_E,\epsilon}$.
Then, $\partial_{h_E,\epsilon}$ can be written explicitly as follows:
\begin{lem}\label{lem 2.2}
\[
\partial_{h_E,\epsilon}=
\begin{pmatrix}
p^\ast\partial_{h^{1/2}_L}\otimes\mathrm{Id}+\mathrm{Id}\otimes q^\ast \partial_{h^{(-1)}}&0\\
-\epsilon\cdot \sqrt{\frac1\tau}\cdot h_Lp^\ast\overline\phi\otimes q^\ast \beta& p^\ast\partial_{h^{-1/2}_L}\otimes\mathrm{Id}+\mathrm{Id}\otimes q^\ast \partial_{h^{(1)}}
\end{pmatrix}.
\]
Here $\beta$ is the differential form in Section \ref{sec 2.1}.
\end{lem}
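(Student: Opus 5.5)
We determine $\partial_{h_E,\epsilon}$ from the compatibility condition of Proposition \ref{prop 1.1}. By uniqueness, it suffices to exhibit one operator $D'$ of type $(1,0)$ such that $D:=D'+\bar\partial_E$ is compatible with $h_{E,\epsilon}$. We write $\bar\partial_E=\bar\partial_0+\gamma$, where $\bar\partial_0$ is the diagonal part and
\[
\gamma=\sqrt{\tfrac1\tau}\,p^\ast\phi\otimes q^\ast\alpha
\]
is the off-diagonal upper-right term, a $(0,1)$-form with values in $\mathrm{Hom}(E_2,E_1)$. Here $E_1=p^\ast L^{1/2}\otimes q^\ast\mathcal{O}(-1)$ and $E_2=p^\ast L^{-1/2}\otimes q^\ast\mathcal{O}(1)$. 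Since $h_{E,\epsilon}$ is block diagonal, the diagonal Chern connections $\nabla_0=\partial_0+\bar\partial_0$ on each summand are compatible with the corresponding blocks $h_1=p^\ast h_L^{1/2}\otimes q^\ast h^{(-1)}$ and $\epsilon h_2$, where $h_2=p^\ast h_L^{-1/2}\otimes q^\ast h^{(1)}$. The Chern connection of a tensor product of pullbacks is the sum of the pulled-back Chern connections. This gives the diagonal entries in the statement; for $\epsilon=-1$ note that multiplying a metric by $-1$ does not change its Chern connection.

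Now put $D=\nabla_0+\gamma+\gamma^\dagger$, where $\gamma^\dagger$ is the $h_{E,\epsilon}$-adjoint of $\gamma$. This is a $(1,0)$-form with values in $\mathrm{Hom}(E_1,E_2)$, characterized by $h_{E,\epsilon}(\gamma u,v)=h_{E,\epsilon}(u,\gamma^\dagger v)$ with the conjugation of forms taken into account. Then $D$ is compatible with $h_{E,\epsilon}$, because the condition $dh=h(D\cdot,\cdot)+h(\cdot,D\cdot)$ is additive and $\gamma+\gamma^\dagger$ is skew-adjoint in the appropriate sense. Moreover $D^{0,1}=\bar\partial_E$. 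Uniqueness then gives $\partial_{h_E,\epsilon}=\partial_0+\gamma^\dagger$ with $\gamma^\dagger$ in the lower-left slot; with the standard sign convention for the Chern connection, the lower-left entry is $-\gamma^\ast$, as is familiar from extensions.

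It remains to compute $\gamma^\dagger$ explicitly. For $u\in E_2$ and $v\in E_1$ we need
\[
h_1(\gamma u,v)=-\epsilon h_2(u,\gamma^\dagger v).
\]
The factor $\epsilon$ appears because the $E_2$ block of $h_{E,\epsilon}$ is $\epsilon h_2$, and the minus sign comes from the form-type convention. Solving, the $X$-part contributes $h_L\bar\phi$: conjugating $\phi$ with the metric gives a section of $L^{-1}\cong \mathrm{Hom}(L^{1/2},L^{-1/2})$. The $\mathbb{P}^1$-part contributes $h^{(-2)}\bar\alpha$ as an $\mathcal{O}(2)$-valued $(1,0)$-form. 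We check in the chart $\mathbb{C}_z$ that this equals $\beta$. There $h^{(-2)}(e_{-2,z},e_{-2,z})=(1+|z|^2)^2$, so $\bar\alpha$ gives
\[
(1+|z|^2)^{-2}(1+|z|^2)^2\,dz\otimes e_{2,z}=dz\otimes e_{2,z}=\beta|_{\mathbb{C}_z}.
\]
By $SU(2)$-invariance, or by the same check in the chart $\mathbb{C}_w$, the identification holds globally. Alternatively, both forms are $SU(2)$-invariant elements of a one-dimensional space, so they are proportional, and the chart check fixes the constant to be $1$.

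The main obstacle is bookkeeping of signs and conventions: the sign $-\epsilon$, and the precise meaning of $h_L\bar\phi$ as a Hom-valued form. I would verify these by a direct local computation in the frame $(e_1,e_2)$, writing $h=\mathrm{diag}(a,\epsilon b)$ and using the frame formula $D^{1,0}=\partial+h^{-1}\partial h-h^{-1}(\bar\partial$-part$)^{\ast}h$. This formula produces the lower-left entry $-\epsilon\, b^{-1}\bar\gamma a$, which is exactly the stated entry.
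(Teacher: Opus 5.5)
Your proposal is correct and follows the same route as the argument the paper defers to (\cite[Lemma 3.5]{Ono}). You exhibit a connection with $(0,1)$-part $\bar\partial_E$ that is compatible with $h_{E,\epsilon}$, namely the diagonal pulled-back Chern connections plus the $h_{E,\epsilon}$-skew completion of the off-diagonal term. You then invoke the uniqueness in Proposition \ref{prop 1.1} and identify $h^{(-2)}\bar\alpha$ with $\beta$ chart by chart; your frame computation giving the lower-left entry $-\epsilon\, b^{-1}\bar\gamma a$ is exactly the required check.

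The only blemish is sign bookkeeping. $\gamma^\dagger$ must be \emph{minus} the $h_{E,\epsilon}$-adjoint of $\gamma$. The minus sign comes from requiring $\gamma+\gamma^\dagger$ to be skew, so that compatibility holds, not from a form-type convention. Your first characterization $h_{E,\epsilon}(\gamma u,v)=h_{E,\epsilon}(u,\gamma^\dagger v)$ should therefore carry a minus sign, as your later formula does.
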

\bpr
This can be proved by the same argument as in \cite[Lemma 3.5]{Ono}.
\epr
This enables us to calculate the curvature of $\nabla_{h_E,\epsilon}$.
Recall that the ASD condition depends on the choice of a Riemannian metric, and hence, on the choice of a K\"ahler form.
We will show that $\nabla_{h_E}$ is an ASD connection with respect to a suitable K\"ahler form on $X\times\mathbb{P}^1$.
\par
We fix a K\"ahler form on $X\times \mathbb{P}^1$ as
\be\label{eq 2.7}
\Omega_\tau:=\bigg(\frac{\tau}{4\pi}p^\ast\omega_X\bigg)\oplus q^\ast\omega_{\mathrm{FS}}.
\ee
\begin{prop}
\begin{itemize}
\item[(1)] Suppose that the triple $((L,\bar\partial_L),\phi,h_L)$ satisfies Manton's five $\tau$-vortex equation with $(C_0,C_1)=(1,1)$.
Then the Chern connection $\nabla_{h_E,1}$ associated with $\bar\partial_E$ and $h_{E,1}$ in \eqref{eq 2.6} is an ASD connection with respect to $\Omega_\tau$.
\item[(2)] Suppose that the triple $((L,\bar\partial_L),\phi,h_L)$ satisfies Manton's five $\tau$-vortex equation with $(C_0,C_1)=(-1,1)$.
Then the unique connection $\nabla_{h_E,-1}$ satisfying $\nabla^{0,1}_{h_E,-1}=\bar\partial_E$ and compatible with $h_{E,-1}$ in \eqref{eq 2.6} is an ASD connection with respect to $\Omega_\tau$.
\end{itemize}
\end{prop}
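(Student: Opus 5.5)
By Lemma \ref{lem 1.2}, it suffices to show $\Lambda_{\Omega_\tau}F_{h_E,\epsilon}=0$, where $F_{h_E,\epsilon}$ is the curvature of $\nabla_{h_E,\epsilon}$. The plan is to compute $F_{h_E,\epsilon}=[\bar\partial_E,\partial_{h_E,\epsilon}]$ block by block, using the explicit $(1,0)$-part from Lemma \ref{lem 2.2}.

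Write $\bar\partial_E=D''+A$ and $\partial_{h_E,\epsilon}=D'+B$, where $D'',D'$ are the diagonal (product) operators. The off-diagonal terms are
\[
A=\sqrt{\tfrac1\tau}\,p^\ast\phi\otimes q^\ast\alpha,\qquad B=-\epsilon\sqrt{\tfrac1\tau}\,h_Lp^\ast\overline\phi\otimes q^\ast\beta .
\]
Expanding gives
\[
F=F_{D}+D''B+D'A+A\wedge B+B\wedge A .
\]
The diagonal curvature is
\[
F_D=\mathrm{diag}\big(\tfrac12 p^\ast F_{h_L}+q^\ast F_{h^{(-1)}},\ -\tfrac12 p^\ast F_{h_L}+q^\ast F_{h^{(1)}}\big).
\]
By \eqref{eq 2.2}, this equals
\[
\mathrm{diag}\big(\tfrac12 F_{h_L}+2\pi\sqrt{-1}\,\omega_{\mathrm{FS}},\ -\tfrac12 F_{h_L}-2\pi\sqrt{-1}\,\omega_{\mathrm{FS}}\big),
\]
with pullbacks suppressed.

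Next, the off-diagonal terms. $D'A$ lives in the upper-right block. Since $\phi$ is holomorphic and $\alpha,\beta$ are parallel for the invariant Chern connections (which I would check in the local chart, or deduce from $SU(2)$-invariance), $D'A$ should reduce to $\sqrt{1/\tau}\,\partial_{h_L}\phi\wedge\alpha$. This is a mixed form of type $(1,0)$ on $X$ and $(0,1)$ on $\mathbb{P}^1$. The Kähler form $\Omega_\tau$ is a product, so contraction kills such mixed forms. The same holds for $D''B$ in the lower-left block. Hence only the diagonal blocks need to be contracted.

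The quadratic terms are diagonal. The upper-left block is $A\wedge B$, proportional to $-\epsilon\frac1\tau|\phi|^2_{h_L}\,\alpha\wedge\beta$. The lower-right block is $B\wedge A=-\epsilon\frac1\tau|\phi|^2_{h_L}\,\beta\wedge\alpha$, which equals $+\epsilon\frac1\tau|\phi|^2_{h_L}\,\alpha\wedge\beta$. By \eqref{eq 2.1}, $\alpha\wedge\beta=2\pi\sqrt{-1}\,\omega_{\mathrm{FS}}$.

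Now contract. Note that $\Lambda_{\Omega_\tau}(p^\ast\eta)=\frac{4\pi}{\tau}\Lambda_{\omega_X}\eta$ and $\Lambda_{\Omega_\tau}\omega_{\mathrm{FS}}=1$. The upper-left block of $\sqrt{-1}\Lambda_{\Omega_\tau}F$ then becomes
\[
\tfrac{2\pi}{\tau}\sqrt{-1}\Lambda_{\omega_X}F_{h_L}-2\pi+\tfrac{2\pi\epsilon}{\tau}|\phi|^2_{h_L}
=\tfrac{2\pi}{\tau}\big(\sqrt{-1}\Lambda_{\omega_X}F_{h_L}+\epsilon|\phi|^2_{h_L}-\tau\big).
\]
The lower-right block is the negative of this. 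Both vanish exactly when \eqref{eq 2.4} holds with $C_0=\epsilon$. This gives (1) for $\epsilon=1$ and (2) for $\epsilon=-1$.

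The main obstacle is the bookkeeping of signs and constants: the sign of $B$, the ordering in $A\wedge B$ versus $B\wedge A$, and the factors of $\sqrt{-1}$ and $2\pi$. The normalization $\tau/4\pi$ is chosen precisely so that these balance, so I would verify the computation carefully in the $z$-chart, where everything is explicit.
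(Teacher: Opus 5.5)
Your proposal is correct and follows essentially the same route as the paper: reduce to $\Lambda_{\Omega_\tau}F_{h_E,\epsilon}=0$ via Lemma \ref{lem 1.2}, compute the blocks of $F_{h_E,\epsilon}=\partial_{h_E,\epsilon}\bar\partial_E+\bar\partial_E\partial_{h_E,\epsilon}$ from Lemma \ref{lem 2.2}, observe that the off-diagonal blocks are mixed forms killed by the product K\"ahler form (using $\partial_{h^{(-2)}}\alpha=0$ and $\bar\partial\beta=0$), and use $F_{22,\epsilon}=-F_{11,\epsilon}$, where $\Lambda_{\Omega_\tau}F_{11,\epsilon}$ is a nonzero multiple of the vortex equation with $C_0=\epsilon$. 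Your constants and signs agree with the paper's computation, including $\Lambda_{\Omega_\tau}p^\ast=\frac{4\pi}{\tau}\Lambda_{\omega_X}$ and the ordering $A\wedge B$ versus $B\wedge A$.
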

\bpr
Since the calculations for (1) and (2) are essentially the same, we prove them simultaneously.
We set $\epsilon=1$ in case (1) and $\epsilon=-1$ in case (2), and denote by $F_{h_E,\epsilon}$ the curvature of $\nabla_{h_E,\epsilon}$.
By Lemma \ref{lem 1.2}, to show that $\nabla_{h_E,\epsilon}$ is an ASD connection, it is enough to show 
\[
\Lambda_{\Omega_\tau}F_{h_E,\epsilon}=0.
\]
Since $E$ is rank 2, we write 
\[
F_{h_E,\epsilon}
=
\begin{pmatrix}
F_{11,\epsilon}&F_{12,\epsilon}\\
F_{21,\epsilon}&F_{22,\epsilon}
\end{pmatrix}.
\]
Recall that $F_{h_E,\epsilon}\in A^{1,1}(\mathrm{End}E)$ and therefore
\be\label{eq 2.8}
F_{h_E,\epsilon}=\partial_{h_E,\epsilon}\bar\partial_E+\bar\partial_E\partial_{h_E,\epsilon}.
\ee
To show $\Lambda_{\Omega_\tau}F_{h_E,\epsilon}=0$, we show $\Lambda_{\Omega_\tau}F_{ij,\epsilon}=0$ for $i,j\in\{1,2\}$.
We first show this for the diagonal part.
By \eqref{eq 2.1}, \eqref{eq 2.2}, \eqref{eq 2.5}, Lemma \ref{lem 2.2}, and \eqref{eq 2.8}, we have 
\begin{align*}
F_{11,\epsilon}&=p^\ast F_{h^{1/2}_L}+q^\ast F_{h^{(-1)}}-\epsilon\cdot\frac1\tau\cdot p^\ast|\phi|^2_{h_L}\otimes q^\ast(\alpha\wedge\beta)\\
&=\frac12p^\ast F_{h_L}+2\pi\sqrt{-1}q^\ast\omega_{\mathrm{FS}}-2\pi\sqrt{-1}\cdot \epsilon\cdot\frac1\tau\cdot p^\ast |\phi|^2_{h_L}\otimes q^\ast\omega_{\mathrm{FS}},\\
F_{22},\epsilon&=p^\ast F_{h^{-1/2}_L}+q^\ast F_{h^{(1)}}-\epsilon\cdot\frac1\tau\cdot p^\ast|\phi|^2_{h_L}\otimes q^\ast(\beta\wedge\alpha)\\
&=-\frac12p^\ast F_{h_L}-2\pi\sqrt{-1}q^\ast\omega_{\mathrm{FS}}+2\pi\sqrt{-1}\cdot \epsilon\cdot\frac1\tau\cdot p^\ast |\phi|^2_{h_L}\otimes q^\ast\omega_{\mathrm{FS}}.
\end{align*}
Hence $F_{22,\epsilon}=-F_{11,\epsilon}$ and therefore $\Lambda_{\Omega_\tau}F_{11,\epsilon}=0$ implies $\Lambda_{\Omega_\tau}F_{22,\epsilon}=0$. 
We note that we used the relations \eqref{eq 2.1} and \eqref{eq 2.2} in the equations above.
\[
\begin{split}
\Lambda_{\Omega_\tau}F_{11,\epsilon}&=\frac{2\pi}{\tau}p^\ast\Lambda_{\omega_X} F_{h_L}+2\pi\sqrt{-1}-2\pi\sqrt{-1}\cdot \epsilon\cdot\frac1\tau\cdot p^\ast |\phi|^2_{h_L}\\
&=-\frac{2\pi\sqrt{-1}}{\tau}\bigg(\sqrt{-1}p^\ast\Lambda_{\omega_X}F_{h_L}+\epsilon \cdot p^\ast|\phi|^2_{h_L}-\tau\bigg)\\
&=0.
\end{split}
\]
The last equality follows from the assumption that $((L,\bar\partial_L),\phi,h_L)$ satisfies Manton's five $\tau$-vortex equation with $(C_0,C_1)=(1,1)$ when $\epsilon=1$, and with
$(C_0,C_1)=(-1,1)$ when $\epsilon=-1$ (see \eqref{eq 2.4}).
We next show $\Lambda_{\Omega_\tau}F_{12,\epsilon}=\Lambda_{\Omega_\tau}F_{21,\epsilon}=0$.
By \eqref{eq 2.8}, we have 
\begin{align*}
F_{12,\epsilon}&=(p^\ast\partial_{h^{1/2}_L}\otimes\mathrm{Id}+\mathrm{Id}\otimes q^\ast \partial_{h^{(-1)}})\bigg(\sqrt{\frac1\tau}\cdot p^\ast\phi\otimes q^\ast\alpha\bigg)+\bigg(\sqrt{\frac1\tau}\cdot p^\ast\phi\otimes q^\ast\alpha\bigg)(p^\ast\partial_{h^{-1/2}_L}\otimes\mathrm{Id}+\mathrm{Id}\otimes q^\ast \partial_{h^{(1)}})\\
&=p^\ast\partial_{h_L}\bigg(\sqrt{\frac1\tau}\cdot p^\ast\phi\bigg)\otimes q^\ast\alpha+\sqrt{\frac1\tau}\cdot p^\ast\phi\otimes q^\ast(\partial_{h^{(-2)}}\alpha).
\end{align*}
Then, by the definition of $\alpha$ and $h^{(-2)}$ we have
\[
\partial_{h^{(-2)}}\alpha=0.
\]
Hence 
\[
F_{12,\epsilon}=p^\ast\partial_{h_L}\bigg(\sqrt{\frac1\tau}\cdot p^\ast\phi\bigg)\otimes q^\ast\alpha
\]
and therefore
\[
\Lambda_{\Omega_\tau}F_{12,\epsilon}=0
\]
since $F_{12,\epsilon}$ only contains differential forms that have mixed contributions of $X$ and $\mathbb{P}^1$.
By a calculation similar to the one above, we obtain
\[
F_{21,\epsilon}=-\epsilon\cdot p^\ast\bar\partial_L\bigg(\sqrt{\frac1\tau}\cdot p^\ast h_L\overline\phi\bigg)\otimes q^\ast\beta-\epsilon\sqrt{\frac1\tau}\cdot p^\ast h_L\overline\phi\otimes q^\ast(\bar\partial_{\mathcal{O}(-2)}\beta).
\]
By the definition of $\beta$, we have
\[
\bar\partial_{\mathcal{O}(-2)}\beta=0.
\]
As a consequence,
\[
\Lambda_{\Omega_\tau}F_{21,\epsilon}=0
\]
since $F_{21,\epsilon}$ only contains differential forms which have mixed contribution of $X$ and $\mathbb{P}^1$.
Hence
\[
\Lambda_{\Omega_\tau}F_{h_E,\epsilon}=0
\]
and therefore $\nabla_{h_E,\epsilon}$ is an ASD connection with respect to $\Omega_\tau$.
\epr
We are able to calculate the instanton number of $E$ if $X$ is compact.
See Section \ref{sec 1.4} for the definition of the instanton number.
\begin{prop}\label{prop 2.4}
If $X$ is compact, then the instanton number of $E$ is $\mathrm{deg}L$.
\end{prop}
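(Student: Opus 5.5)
The plan is to compute $\int_{X\times\mathbb{P}^1}c_2(E)$ topologically, using the fact recalled in Section \ref{sec 1.4} that the Chern class $c_2(E)$ does not depend on the connection used to represent it. The instanton number in Definition \ref{def 1.2} depends only on the underlying smooth complex vector bundle $E$, not on $\bar\partial_E$ or on the ASD connection $\nabla_{h_E,\epsilon}$. As a smooth bundle,
\[
E=A\oplus B,\qquad A:=p^\ast L^{1/2}\otimes q^\ast\mathcal{O}(-1),\qquad B:=p^\ast L^{-1/2}\otimes q^\ast\mathcal{O}(1).
\]
The off-diagonal term $\sqrt{1/\tau}\,p^\ast\phi\otimes q^\ast\alpha$ only changes the holomorphic structure, not the smooth bundle. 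So I will compute $c_2$ with the direct-sum connection $\nabla_A\oplus\nabla_B$, where $\nabla_A,\nabla_B$ are the Chern connections of the product metrics $p^\ast h_L^{\pm1/2}\otimes q^\ast h^{(\mp1)}$. Equivalently, I will use the Whitney formula $c_2(A\oplus B)=c_1(A)\,c_1(B)$. This also follows directly from the formula for $c_2(E,\nabla)$ in Section \ref{sec 1.4}, since for a block-diagonal curvature
\[
\operatorname{Tr}F\wedge\operatorname{Tr}F-\operatorname{Tr}(F\wedge F)=2F_A\wedge F_B.
\]

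Next I compute the first Chern forms. Write $a:=\frac12 p^\ast c_1(L,\nabla_{h_L})$ and $b:=q^\ast c_1(\mathcal{O}(1),\nabla_{h^{(1)}})$. By \eqref{eq 2.2}, $b=q^\ast\omega_{\mathrm{FS}}$. Then
\[
c_1(A)=a-b,\qquad c_1(B)=-a+b,
\]
so
\[
c_2(E)=-(a-b)^2=-a^2+2a\wedge b-b^2 .
\]
The form $a^2$ vanishes because it is pulled back from the real $2$-dimensional $X$. Likewise $b^2=0$ because it is pulled back from $\mathbb{P}^1$. Hence $c_2(E)=2a\wedge b$.

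Finally, I integrate with Fubini on $X\times\mathbb{P}^1$, using the product complex orientation:
\[
\int_{X\times\mathbb{P}^1}2a\wedge b=2\cdot\frac12\int_X c_1(L,\nabla_{h_L})\cdot\int_{\mathbb{P}^1}\omega_{\mathrm{FS}}=\deg L\cdot 1=\deg L .
\]
Here $\int_{\mathbb{P}^1}\omega_{\mathrm{FS}}=1$ follows from the normalization $\frac{\sqrt{-1}}{2\pi}\frac{dz\wedge d\bar z}{(1+|z|^2)^2}$. Since $X$ is a curve, $\deg L$ is independent of the Kähler form.

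The only points needing care are bookkeeping ones. The first is the sign convention in $c_2$: the convention $c=\det(I+\frac{\sqrt{-1}}{2\pi}F)$ gives exactly the Whitney product. The second is that the definition of the instanton number needs only the existence of some ASD connection, which the preceding proposition supplies. The same argument works for $\epsilon=\pm1$, since the bundle is the same in both cases.
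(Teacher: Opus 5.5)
Your proposal is correct and follows essentially the paper's route. The paper proves the general Lemma \ref{lem 2.3}: it takes the block-diagonal connection built from connections on the line bundles and the Chern connections of $h^{(\pm1)}$, evaluates $c_2$ by Chern--Weil and Fubini to get $\deg L_1-\deg L_2$, and then sets $L_1=L^{1/2}$, $L_2=L^{-1/2}$. Your computation $c_2(E)=c_1(A)\,c_1(B)=2a\wedge b$ is the same calculation carried out directly in this special case.
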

We deduce this Proposition by proving a more general result.
\begin{lem}\label{lem 2.3}
Let $X$ be a compact Riemann surface.
Let $L_1$ and $L_2$ be smooth complex line bundles over $X$.
Let $F$ be a vector bundle over $X\times \mathbb{P}^1$ defined by 
\[
F:=p^\ast L_1\otimes q^\ast\mathcal{O}(-1)\oplus p^\ast L_2\otimes q^\ast \mathcal{O}(1).
\]
Here, $p,q$ are the projections as above.
Then 
\[
\int_{X\times \mathbb{P}^1}c_2(F)=\mathrm{deg}L_1-\mathrm{deg}L_2.
\]
\end{lem}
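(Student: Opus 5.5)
We compute $c_2(F)$ purely topologically, using the Whitney sum formula together with the ring structure of $H^\ast(X\times\mathbb{P}^1,\mathbb{Z})$. By the Künneth formula, the cohomology ring is generated in degree two by $a:=p^\ast[\omega_X]_{\mathrm{vol}}$, the pullback of the orientation class of $X$ normalized so that $\int_X a=1$, and by $b:=q^\ast c_1(\mathcal{O}(1))$. These satisfy
\[
a^2=0,\qquad b^2=0,\qquad \int_{X\times\mathbb{P}^1}a\wedge b=1.
\]
In this notation $c_1(p^\ast L_i)=(\deg L_i)\,a$ and $c_1(q^\ast\mathcal{O}(\pm1))=\pm b$.

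We then write $F=N_1\oplus N_2$ with
\[
N_1:=p^\ast L_1\otimes q^\ast\mathcal{O}(-1),\qquad N_2:=p^\ast L_2\otimes q^\ast\mathcal{O}(1).
\]
For a tensor product of line bundles, $c_1$ is additive, so
\[
c_1(N_1)=(\deg L_1)\,a-b,\qquad c_1(N_2)=(\deg L_2)\,a+b.
\]
The Whitney formula $c(N_1\oplus N_2)=c(N_1)c(N_2)$ gives $c_2(F)=c_1(N_1)\wedge c_1(N_2)$. Expanding the product and discarding the terms with $a^2=0$ and $b^2=0$ leaves
\[
c_2(F)=(\deg L_1)\,a b-(\deg L_2)\,b a=(\deg L_1-\deg L_2)\,ab,
\]
where we used that $a$ and $b$ are even-degree classes and therefore commute. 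Integrating over $X\times\mathbb{P}^1$ yields $\deg L_1-\deg L_2$.

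To stay within the Chern--Weil framework of Section \ref{sec 1.4}, each step can be realized at the level of forms. We choose connections on $L_1$ and $L_2$ and equip $\mathcal{O}(\pm1)$ with the Chern connections of $h^{(\pm1)}$, whose curvatures are $\mp 2\pi\sqrt{-1}\,\omega_{\mathrm{FS}}$ by \eqref{eq 2.2}. We then take the direct-sum connection on $F$, whose curvature is diagonal. With this connection the formula for $c_2(F,\nabla)$ becomes $c_1(N_1,\nabla)\wedge c_1(N_2,\nabla)$. The only mixed terms are the products of $p^\ast c_1(L_i,\nabla)$ with $q^\ast\omega_{\mathrm{FS}}$, since a product of two $2$-forms pulled back from the same surface vanishes. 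Fubini's theorem, together with $\int_{\mathbb{P}^1}\omega_{\mathrm{FS}}=1$ and $\int_X c_1(L_i,\nabla)=\deg L_i$, then gives the result.

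The main obstacle is keeping the signs and normalizations straight. One must check that the orientation of $X\times\mathbb{P}^1$ used for integration is the product complex orientation, so that $\int_{X\times\mathbb{P}^1} p^\ast\sigma\wedge q^\ast\omega_{\mathrm{FS}}=\int_X\sigma$. One must also check that $\int_{\mathbb{P}^1}c_1(\mathcal{O}(1))=+1$. Both are consistent with \eqref{eq 2.2}: $c_1(\mathcal{O}(n),\nabla_{h^{(n)}})=\frac{\sqrt{-1}}{2\pi}F_{h^{(n)}}=n\,\omega_{\mathrm{FS}}$.

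For Proposition \ref{prop 2.4}, we apply the lemma with $L_1=L^{1/2}$ and $L_2=L^{-1/2}$. This gives instanton number $\tfrac12\deg L-(-\tfrac12\deg L)=\deg L$.
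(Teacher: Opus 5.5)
Your proof is correct and is essentially the paper's argument. The paper also puts the direct-sum connection on $F$ (using the Chern connections of $h^{(\pm1)}$ on $\mathcal{O}(\pm1)$), uses that the curvature is diagonal, and integrates the mixed terms $p^\ast F_{\nabla_i}\wedge q^\ast F_{h^{(\pm1)}}$ by Fubini, arriving at $\deg L_1\cdot\deg\mathcal{O}(1)+\deg L_2\cdot\deg\mathcal{O}(-1)$. The only difference is bookkeeping: the paper expands $-\frac{1}{8\pi^2}\{\mathrm{Tr}F_\nabla\wedge\mathrm{Tr}F_\nabla-\mathrm{Tr}(F_\nabla\wedge F_\nabla)\}$ directly, whereas you factor the total Chern form as $c_1(N_1,\nabla)\wedge c_1(N_2,\nabla)$, equivalently the Whitney formula in cohomology, which is slightly cleaner but amounts to the same computation.
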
 
\bpr
Let $\nabla_i\,\, (i=1,2)$ be connections of $L_i\,\, (i=1,2)$, and let $\nabla_{h^{(n)}}$ be the Chern connection associated with $\bar\partial_{\mathcal{O}(n)}$ and $h^{(n)}$.
We define a connection $\nabla$ on $F$ as 
\[
\nabla:=
\begin{pmatrix}
p^\ast \nabla_1\otimes\mathrm{Id}+\mathrm{id}\otimes q^\ast\nabla_{h^{(-1)}}&0\\
0&p^\ast\nabla_2\otimes\mathrm{Id}+\mathrm{id}\otimes q^\ast\nabla_{h^{(1)}}
\end{pmatrix}.
\]
We denote by $F_\nabla, F_{\nabla_1}, F_{\nabla_2}$ the curvature of $\nabla,\nabla_1,\nabla_2$.
By the definiton of $\nabla$, we have
\[
F_\nabla
=\begin{pmatrix}
p^\ast F_{\nabla_1}+q^\ast F_{h^{(-1)}}&0\\
0&p^\ast F_{\nabla_2}+q^\ast F_{h^{(1)}}
\end{pmatrix}.
\]
By a direct calculation, we obtain
\begin{align*}
\mathrm{Tr}(F_\nabla \wedge F_\nabla)&=2p^\ast F_{\nabla_1}\wedge q^\ast F_{\nabla_{h^{(-1)}}}+2p^\ast F_{\nabla_2}\wedge q^\ast F_{\nabla_{h^{(1)}}},\\
\mathrm{Tr}F_\nabla\wedge \mathrm{Tr}F_\nabla&=2(p^\ast F_{\nabla_1}+p^\ast F_{\nabla_2})\wedge (q^\ast F_{\nabla_{h^{(-1)}}}+q^\ast F_{\nabla_{h^{(1)}}}).
\end{align*}
Then, by Section \ref{sec 1.4}, we have
\begin{align*}
\int_{X\times \mathbb{P}^1}c_2(F)&=\int_{X\times \mathbb{P}^1}c_2(F,\nabla)\\
&=-\frac{1}{8\pi^2}\int_{X\times \mathbb{P}^1}\bigg\{\operatorname{Tr}F_\nabla\wedge \operatorname{Tr}F_\nabla-\operatorname{Tr}(F_\nabla\wedge F_\nabla)\bigg\}\\
&=-\frac{1}{8\pi^2}\int_{X\times \mathbb{P}^1}2p^\ast F_{\nabla_1}\wedge q^\ast F_{\nabla_{h^{(1)}}}+2p^\ast F_{\nabla_2}\wedge q^\ast F_{\nabla_{h^{(-1)}}}\\
&=\frac{\sqrt{-1}}{2\pi}\int_XF_{\nabla_1}\cdot \frac{\sqrt{-1}}{2\pi}\int_{\mathbb{P}^1}F_{\nabla_{h^{(1)}}}+\frac{\sqrt{-1}}{2\pi}\int_XF_{\nabla_2}\cdot \frac{\sqrt{-1}}{2\pi}\int_{\mathbb{P}^1}F_{\nabla_{h^{(-1)}}}\\
&=\mathrm{deg}L_1\cdot\mathrm{deg}\mathcal{O}(1)+\mathrm{deg}L_2\cdot\mathrm{deg}\mathcal{O}(-1)\\
&=\mathrm{deg}L_1-\mathrm{deg}L_2.
\end{align*}
\epr
Proposition \ref{prop 2.4} immediately follows from Lemma \ref{lem 2.3} since $\mathrm{deg}L^{1/2}=\frac12\mathrm{deg}L$.
We summarize the results so far.
\begin{thm}\label{thm 2.1}
Let $X$ be a Riemann surface with a K\"ahler form $\omega_X$.
We fix a constant $\tau>0$.
Let $(L,\bar\partial_L)$ be a holomorphic line bundle over $X$ satisfying assumption \eqref{eq 2.3}.
Let $\phi$ be a non-trivial holomorphic section of $(L, \bar\partial_L)$.
\par
From $(L,\bar\partial_L)$ and $\phi$, we can construct an $SU(2)$-equivariant holomorphic bundle $(E,\bar\partial_E)$ on $X\times \mathbb{P}^1$ as
\begin{align*}
E:&=p^\ast L^{1/2}\otimes q^\ast \mathcal{O}(-1)\oplus p^\ast L^{-1/2}\otimes q^\ast\mathcal{O}(1),\\
\bar\partial_E:&=
\begin{pmatrix}
p^\ast\bar\partial_{L^{1/2}}\otimes \mathrm{Id}+\mathrm{Id}\otimes q^\ast\bar\partial_{\mathcal{O}(-1)}&\sqrt{\frac1\tau}\cdot p^\ast\phi\otimes q^\ast\alpha\\
0&p^\ast\bar\partial_{L^{-1/2}}\otimes \mathrm{Id}+\mathrm{Id}\otimes q^\ast\bar\partial_{\mathcal{O}(1)}
\end{pmatrix}.
\end{align*}
Let $h_L$ be a Hermitian metric on $L$.
\begin{itemize}
\item[(1)]
Suppose that the triple $((L,\bar\partial_L),\phi,h_L)$ satisfies Manton's five $\tau$-vortex equations with $(C_0,C_1)=(1,1)$.
Then, the Chern connection $\nabla_{h_E}$ associated with $\bar\partial_E$ and the Hermitian metric 
\[
h_E:=\begin{pmatrix}
p^\ast h^{1/2}_L\otimes q^\ast h^{(-1)}&0\\
0&p^\ast h^{-1/2}_L\otimes q^\ast h^{(1)}
\end{pmatrix}
\]
is an ASD connection with respect to the K\"ahler form $\Omega_\tau:=\bigg(\frac{\tau}{4\pi}p^\ast\omega_X\bigg)\oplus q^\ast\omega_{\mathrm{FS}}$.
\item[(2)] Suppose that the triple $((L,\bar\partial_L),\phi,h_L)$ satisfies Manton's five $\tau$-vortex equations with $(C_0,C_1)=(-1,1)$.
Then, the unique connection whose $(0,1)$-part is $\bar\partial_E$ and compatible with the pseudo-Hermitian metric of signature $(1,1)$
\[
h_E:=\begin{pmatrix}
p^\ast h^{1/2}_L\otimes q^\ast h^{(-1)}&0\\
0&-p^\ast h^{-1/2}_L\otimes q^\ast h^{(1)}
\end{pmatrix}
\]
is an ASD connection with respect to the K\"ahler form $\Omega_\tau$.
\end{itemize}
If $X$ is compact, the instanton number of $E$ is $\mathrm{deg}L$ in both cases.
\par
\end{thm}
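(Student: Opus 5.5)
The theorem collects results already established in this subsection, so the plan is to assemble them in order rather than compute anything new.

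First, the holomorphic and equivariant structure. By Proposition \ref{prop 2.2}, the operator $\bar\partial_E$ of \eqref{eq 2.5} defines an $SU(2)$-equivariant holomorphic structure on $E$. The one point to recheck is integrability, $\bar\partial_E^2=0$. The diagonal blocks are tensor products of holomorphic structures, so they square to zero. For the off-diagonal entry, note that $p^\ast\phi\otimes q^\ast\alpha$ is a $\bar\partial$-closed $(0,1)$-form with values in $p^\ast L\otimes q^\ast\mathcal{O}(-2)$. This holds because $\phi$ is holomorphic and $\bar\partial\alpha$ is a $(0,2)$-form on the curve $\mathbb{P}^1$, hence zero. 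Equivariance follows because $SU(2)$ acts trivially on the $X$-factors and $\alpha$ is $SU(2)$-invariant.

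Second, the metric and the connection. The form $h_E=h_{E,\epsilon}$ of \eqref{eq 2.6} is $SU(2)$-invariant because $h^{(\pm1)}$ are. It is a Hermitian metric when $\epsilon=1$ and has signature $(1,1)$ when $\epsilon=-1$. Proposition \ref{prop 1.1} then gives the unique compatible connection with $(0,1)$-part $\bar\partial_E$, and Lemma \ref{lem 2.2} gives its $(1,0)$-part explicitly.

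Third, the ASD property. By Lemma \ref{lem 1.2}, we only need $\Lambda_{\Omega_\tau}F_{h_E,\epsilon}=0$. This is exactly the preceding Proposition, with $\epsilon=1$ for part (1) and $\epsilon=-1$ for part (2). Its key steps are as follows.
\begin{itemize}
\item[(a)] The diagonal blocks satisfy $F_{22,\epsilon}=-F_{11,\epsilon}$. Using \eqref{eq 2.1} and \eqref{eq 2.2}, the contraction $\Lambda_{\Omega_\tau}F_{11,\epsilon}$ equals $-\frac{2\pi\sqrt{-1}}{\tau}$ times the vortex expression \eqref{eq 2.4} with $C_0=\epsilon$. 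The factor $\frac{\tau}{4\pi}$ in $\Omega_\tau$ is chosen precisely so that the coefficients match, and the vortex equation makes this vanish.
\item[(b)] The off-diagonal blocks involve only mixed $X$–$\mathbb{P}^1$ forms, since $\partial_{h^{(-2)}}\alpha=0$ and $\bar\partial\beta=0$. Hence their contraction vanishes.
\end{itemize}

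Finally, the instanton number. For compact $X$, Lemma \ref{lem 2.3} applies with $L_1=L^{1/2}$ and $L_2=L^{-1/2}$, giving $\deg L^{1/2}-\deg L^{-1/2}=\deg L$ (Proposition \ref{prop 2.4}). Here $c_2(E)$ is a topological invariant, independent of the pseudo-Hermitian choice, so the same value holds in both cases.

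The main obstacle is the bookkeeping of signs and normalizations in step (a). The relevant constants are $\alpha\wedge\beta=2\pi\sqrt{-1}\omega_{\mathrm{FS}}$, the curvature $F_{h^{(n)}}=-2\pi\sqrt{-1}n\,\omega_{\mathrm{FS}}$, the sign $-\epsilon$ in Lemma \ref{lem 2.2}, and the normalization of $\Lambda$ for a product Kähler form. I would verify these first in a local trivialization on $\mathbb{C}_z$.
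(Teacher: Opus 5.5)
Your proposal is correct and follows the paper's own argument. Theorem \ref{thm 2.1} is obtained by assembling Proposition \ref{prop 2.2}, Lemma \ref{lem 2.2}, the curvature computation giving $\Lambda_{\Omega_\tau}F_{h_E,\epsilon}=0$ via Lemma \ref{lem 1.2} (diagonal blocks reduce to $-\frac{2\pi\sqrt{-1}}{\tau}$ times the vortex expression, off-diagonal blocks are mixed forms), and Lemma \ref{lem 2.3} with $L_1=L^{1/2}$, $L_2=L^{-1/2}$ for the instanton number; your extra checks (integrability from $\bar\partial\phi=0$ and $\bar\partial\alpha=0$ for type reasons, and topological invariance of $c_2(E)$ covering the pseudo-Hermitian case) are sound.
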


\subsection{From ASD Connections to Vortex Equations}\label{sec 2.3}
We use the same notation as in the previous section.
We fix a constant $\tau>0$.
\par
In this section, we construct a solution of Manton's five vortex equation on $X$ with $(C_0,C_1)=(1 ,1)$ or $(C_0,C_1)=(-1,1)$ from an ASD connection on $X\times \mathbb{P}^1$ satisfying certain conditions.
We first construct a holomorphic bundle on $X\times\mathbb{P}^1$ from holomorphic line bundles on $X$.
We then show that if that bundle is $SU(2)$-equivariant, admits an $SU(2)$-invariant (pseudo)-Hermitian metric, and the connection associated with the holomorphic structure and the metric is an ASD connection, then it gives rise to a solution of the Vortex equation.
\par
Let $(L_1,\bar\partial_{L_1})$ be a holomorphic line bundle over $X$ with $\mathrm{deg}L_1>0$ and let $(L_2,\bar\partial_{L_2})$ be a holomorphic line bundle over $X$ with 
\[
\mathrm{deg}L_2=-\mathrm{deg}L_1.
\]
Let $(E,\bar\partial_E)$ be a holomorphic bundle on $X\times\mathbb{P}^1$ that is an extension of 
$(p^\ast L_2\otimes q^\ast \mathcal{O}(1),p^\ast\bar\partial_{L_2}\otimes\mathrm{Id}+\mathrm{Id}\otimes q^\ast\bar\partial_{\mathcal{O}(1)})$
by $(p^\ast L_1\otimes q^\ast \mathcal{O}(-1),p^\ast\bar\partial_{L_1}\otimes\mathrm{Id}+\mathrm{Id}\otimes q^\ast\bar\partial_{\mathcal{O}(-1)})$.
Thus, we have a short exact sequence of holomorphic vector bundles
\[
0\longrightarrow
p^\ast L_1\otimes q^\ast\mathcal{O}(-1)
\longrightarrow E
\longrightarrow
p^\ast L_2\otimes q^\ast\mathcal{O}(1)
\longrightarrow 0.
\]
Since this is also a short exact sequence of smooth bundles, and such a sequence splits,
\[
E\simeq p^\ast L_1\otimes q^\ast\mathcal{O}(-1)\oplus p^\ast L_2\otimes q^\ast\mathcal{O}(1)
\]
as smooth bundles.
From now on, we identify $E$ and $p^\ast L_1\otimes q^\ast\mathcal{O}(-1)\oplus p^\ast L_2\otimes q^\ast\mathcal{O}(1)$.
Let $\bar\partial_E$ be the Dolbeault operator of $E$.
Since $E$ given by an extension, $\bar\partial_E$ can be written as
\[
\bar\partial_E=
\begin{pmatrix}
p^\ast\bar\partial_{L_1}\otimes\mathrm{Id}+\mathrm{Id}\otimes q^\ast\bar\partial_{\mathcal{O}(-1)}& \psi\\
0&p^\ast\bar\partial_{L_2}\otimes\mathrm{Id}+\mathrm{Id}\otimes q^\ast\bar\partial_{\mathcal{O}(1)}
\end{pmatrix}
\]
where $\psi\in A^{0,1}(\mathrm{Hom}(p^\ast L_2\otimes q^\ast\mathcal{O}(1), p^\ast L_1\otimes q^\ast\mathcal{O}(-1))$.\par
The bundle $E$ is $SU(2)$-equivariant with respect to the $SU(2)$-action induced by that on $\mathcal{O}(-1)$ and $\mathcal{O}(1)$.
Moreover, if $(E,\bar\partial_E)$ is an $SU(2)$-equivariant holomorphic bundle, then $\psi$ has a specific form as follows. 
\begin{prop}\label{prop 2.5}
If $(E,\bar\partial_E)$ is an $SU(2)$-equivariant holomorphic bundle, then
\begin{itemize}
\item[(1)] there exists $\phi\in A(\mathrm{Hom}(L_2,L_1))$ such that 
\[
\psi=p^\ast\phi\otimes q^\ast\alpha.
\]
\item[(2)] $\phi$ is a holomorphic section of $(L_1\otimes L_2^{-1},\bar\partial_{L_1}\otimes \mathrm{Id}+ \mathrm{Id}\otimes\bar\partial_{L^{-1}_2})$.
Here $(L^{-1}_2,\bar\partial_{L^{-1}_2})$ is the dual holomorphic line bundle of $(L_2,\bar\partial_{L_2})$.
\end{itemize}
See Section \ref{sec 2.1} for $\alpha$. 
\end{prop}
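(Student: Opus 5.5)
Since $SU(2)$ acts trivially on $X$ and through $\mathcal{O}(\pm1)$ on the $\mathbb{P}^1$ factor, the plan is to reduce the statement to the classification of invariant forms on $\mathbb{P}^1$ in the Proposition of Section \ref{sec 2.1}.

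\textbf{Step 1: $\psi$ is invariant.} The diagonal part $\bar\partial_0$ of $\bar\partial_E$ is built from pulled-back $SU(2)$-equivariant operators, so it commutes with the $SU(2)$-action. If $(E,\bar\partial_E)$ is an $SU(2)$-equivariant holomorphic bundle, then $g\circ\bar\partial_E\circ g^{-1}=\bar\partial_E$ for every $g\in SU(2)$, since each $g$ is biholomorphic. Subtracting the diagonal part, we conclude that
\[
\psi\in A^{0,1}\big(p^\ast(L_1\otimes L_2^{-1})\otimes q^\ast\mathcal{O}(-2)\big)
\]
is $SU(2)$-invariant. One might worry that the holomorphic $SU(2)$-action on $E$ need not coincide with the given one, but under the smooth identification of $E$ with the direct sum, the action is by definition the one induced from $\mathcal{O}(\pm1)$. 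We will take this as the intended meaning, as in \cite[Proposition 3.3]{Ono}.

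\textbf{Step 2: factorization $\psi=p^\ast\phi\otimes q^\ast\alpha$.} Decompose $\psi$ by type on $X\times\mathbb{P}^1$ as $\psi=\psi_X+\psi_{\mathbb{P}^1}$, where $\psi_X$ has its $(0,1)$-form component along $X$ and $\psi_{\mathbb{P}^1}$ along $\mathbb{P}^1$. Both pieces are invariant, because the action preserves the product decomposition.

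For fixed $x\in X$, restrict $\psi_X$ to $\{x\}\times\mathbb{P}^1$. Choosing a basis of $(T^{0,1\ast}_xX)\otimes(L_1\otimes L_2^{-1})_x$, this restriction gives finitely many $SU(2)$-invariant sections of $\mathcal{O}(-2)$. By part (1) of that Proposition, these sections vanish, so $\psi_X=0$.

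Similarly, restricting $\psi_{\mathbb{P}^1}$ to $\{x\}\times\mathbb{P}^1$ gives an element of $(L_1\otimes L_2^{-1})_x\otimes A^1(\mathcal{O}(-2))^{SU(2)}$. This space is one-dimensional in the second factor and spanned by $\alpha$, so $\psi_{\mathbb{P}^1}|_{\{x\}\times\mathbb{P}^1}=\phi(x)\otimes\alpha$ for a unique $\phi(x)$.

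Smoothness of $\phi$ in $x$ follows by evaluating at a point where $\alpha\neq0$, for instance $z=0$, where $\alpha=d\bar z\otimes e_{-2,z}$. There $\phi(x)$ is read off from $\psi$ as a smooth coefficient. This gives (1).

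\textbf{Step 3: holomorphicity of $\phi$.} The integrability condition $\bar\partial_E^2=0$ has off-diagonal component
\[
\bar\partial_{\mathrm{Hom}}\psi=0,
\]
where $\bar\partial_{\mathrm{Hom}}$ is the operator on $p^\ast(L_1\otimes L_2^{-1})\otimes q^\ast\mathcal{O}(-2)$-valued forms. Substituting $\psi=p^\ast\phi\otimes q^\ast\alpha$ gives
\[
p^\ast(\bar\partial\phi)\otimes q^\ast\alpha-p^\ast\phi\otimes q^\ast(\bar\partial_{\mathcal{O}(-2)}\alpha)=0.
\]
The second term vanishes because $\bar\partial_{\mathcal{O}(-2)}\alpha$ is a $(0,2)$-form on the curve $\mathbb{P}^1$. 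The first term is a mixed $(0,1)_X\wedge(0,1)_{\mathbb{P}^1}$ form, and $\alpha$ vanishes nowhere, so $\bar\partial\phi=0$. This is (2).

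I expect the main obstacle to be Step 1, namely making precise that equivariance of the holomorphic structure forces $\psi$ itself to be invariant. Once that is settled, Steps 2 and 3 are pointwise linear algebra together with the cited Proposition.
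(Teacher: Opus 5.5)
Your argument is correct and is essentially the proof the paper defers to (García-Prada's Proposition 3.5(b) and Ono's Proposition 3.5): you show the off-diagonal term $\psi$ is invariant, reduce on each slice $\{x\}\times\mathbb{P}^1$ to the classification of $SU(2)$-invariant $\mathcal{O}(-2)$-valued sections and $(0,1)$-forms, and then use the off-diagonal part of $\bar\partial_E^2=0$ to get $\bar\partial\phi=0$. The only slip is a harmless sign in Step 3: the Leibniz rule gives $p^\ast(\bar\partial\phi)\wedge q^\ast\alpha+p^\ast\phi\otimes q^\ast(\bar\partial_{\mathcal{O}(-2)}\alpha)$, but the second term vanishes anyway.
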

\begin{proof}
$(1)$ was essentially proved in \cite[3.5. Proposition. (b)]{GP2}.
See also \cite[Proposition 3.5]{Ono} for a proof of $(1)$ and $(2)$.
\end{proof}
Since multiplying a holomorphic section by a nonzero constant preserves holomorphicity, $\sqrt{\tau}\phi$ is also holomorphic.
Thus, after replacing $\sqrt{\tau}\phi$ by $\phi$, we may write
\[
\psi
=
\sqrt{\frac{1}{\tau}}\cdot p^\ast\phi\otimes q^\ast\alpha.
\]
We keep the normalization of $\alpha$ fixed.
\par
We next need a structural result on $SU(2)$-invariant pseudo-Hermitian metrics on $E$.
The following result was proved in \cite{GP2} and \cite{Ono}.
\begin{prop}\label{prop 2.6}
Let $h_E$ be an $SU(2)$-invariant pseudo-Hermitian metric on $E$.
Then there exist Hermitian metrics $h_{L_1}$ of $L_1$ and $h_{L_2}$ of $L_2$ such that
\[
h_E
=
\begin{pmatrix}
\epsilon_0\cdot p^\ast h_{L_1}\otimes q^\ast h^{(-1)}&0\\
0& \epsilon_1\cdot p^\ast h_{L_2}\otimes q^\ast h^{(1)}
\end{pmatrix},\,\,\epsilon_0,\epsilon_1\in\{1,-1\}.
\]
If $h_E$ is a Hermitian metric, then $(\epsilon_0,\epsilon_1)=(1,1)$ and if $h_E$ is a pseudo-Hermitian metric of signature $(1,1)$ then $(\epsilon_0,\epsilon_1)=(-1,1)$ or $(\epsilon_0,\epsilon_1)=(1,-1)$.
\end{prop}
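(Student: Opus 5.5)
The proof will follow Garc\'ia-Prada's dimensional reduction argument \cite{GP2}, adapted to allow indefinite signature as in \cite{Ono}. The plan has three steps: reduce the problem to $SU(2)$-invariant data on each factor of $E$, classify those data by weights, and then fix the resulting real constants.

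\textbf{Step 1: reduce $h_E$ to invariant data.} The smooth splitting $E\simeq p^\ast L_1\otimes q^\ast\mathcal{O}(-1)\oplus p^\ast L_2\otimes q^\ast\mathcal{O}(1)$ is $SU(2)$-equivariant, so write $h_E$ as a $2\times 2$ matrix of Hermitian forms. Each entry is an $SU(2)$-invariant section of a bundle of the form $p^\ast(\cdot)\otimes q^\ast(\cdot)$.

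The off-diagonal entry is a section of
\[
p^\ast(L_1^\vee\otimes\overline{L_2}^\vee)\otimes q^\ast\big(\mathcal{O}(1)\otimes\overline{\mathcal{O}(-1)}^\vee\big).
\]
The metric $h^{(-1)}$ gives $\overline{\mathcal{O}(-1)}^\vee\simeq\mathcal{O}(-1)^{\vee}\cdot$, an equivariant identification with $\mathcal{O}(1)$. Hence the $\mathbb{P}^1$-factor is equivariantly $\mathcal{O}(2)$, of weight $\pm2\neq0$.

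Restrict to a slice $\{x\}\times\mathbb{P}^1$. There the entry becomes an invariant section of $V_x\otimes\mathcal{O}(2)$, where $V_x$ is a fixed vector space with trivial action. Choosing a basis of $V_x$ reduces it to invariant sections of $\mathcal{O}(2)$, which vanish by Lemma \ref{lem 1.5} (equivalently part (1) of the proposition on $A(\mathcal{O}(n))^{SU(2)}$). So the off-diagonal entries vanish identically.

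\textbf{Step 2: the diagonal entries.} The diagonal entries are invariant Hermitian forms on $p^\ast L_1\otimes q^\ast\mathcal{O}(-1)$ and on $p^\ast L_2\otimes q^\ast\mathcal{O}(1)$. Divide them by $q^\ast h^{(-1)}$ and $q^\ast h^{(1)}$, which are invariant and positive. The result is an invariant real section of the weight-zero bundle $p^\ast(L_i^\vee\otimes\overline{L_i}^\vee)$.

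On each slice, Lemma \ref{lem 1.5}(2) (or Lemma \ref{lem 1.6}) shows this section is constant along $\mathbb{P}^1$. It therefore descends to a real Hermitian form $k_i$ on $L_i$ over $X$. Since $h_E$ is block diagonal and non-degenerate, each $k_i$ is nowhere zero. Because $X$ is connected, $k_i$ has a constant sign $\epsilon_{i-1}\in\{\pm1\}$. Setting $h_{L_i}:=\epsilon_{i-1}k_i$ gives Hermitian metrics with the stated block form.

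\textbf{Step 3: the signature statement.} This is immediate: positive definite forces $(1,1)$, and signature $(1,1)$ forces opposite signs.

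The main subtlety is Step 1. Lemma \ref{lem 1.5} is stated for line bundles over $G/U(1)$, but here it must be applied pointwise in $x$ with a coefficient vector space, and it must respect the conjugate-linear nature of Hermitian forms. I would handle this by fixing a local frame of $L_i$ near $x$, so the entries become scalar invariant sections of $\mathcal{O}(\pm2)$ or $\mathcal{O}(0)$ depending smoothly on $x$. I would then check directly that the conjugate bundle $\overline{\mathcal{O}(n)}$ carries weight $-n$ equivariantly, using the invariant metric $h^{(n)}$.
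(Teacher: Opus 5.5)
The paper gives no proof of this proposition; it cites Garc\'ia-Prada and Ono. Your outline is the standard argument from those references: use the equivariant splitting, kill the off-diagonal blocks by a weight argument, show the diagonal blocks are constant multiples of $h^{(\mp1)}$ along $\mathbb{P}^1$, and fix signs using connectedness of $X$. Steps 2 and 3 are fine as written.

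Step 1, the step with the real content, has a bookkeeping error. It reaches the right answer only because two mistakes cancel.

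\emph{Wrong bundle.} The $(1,2)$ block pairs $E_1=p^\ast L_1\otimes q^\ast\mathcal{O}(-1)$ with $E_2=p^\ast L_2\otimes q^\ast\mathcal{O}(1)$. So it lies in $E_1^\vee\otimes\overline{E_2}^\vee$, whose $\mathbb{P}^1$-factor is $\mathcal{O}(1)\otimes\overline{\mathcal{O}(1)}^\vee$, not $\mathcal{O}(1)\otimes\overline{\mathcal{O}(-1)}^\vee$.

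\emph{Wrong identification.} An invariant metric identifies $\overline{V}$ with $V^\vee$, hence $\overline{V}^\vee$ with $V$, not with $V^\vee$. So the bundle you displayed is $\mathcal{O}(1)\otimes\mathcal{O}(-1)$, of weight $0$. That is the $\mathbb{P}^1$-factor of the diagonal block $h_{11}$.

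Your rule $\overline{V}^\vee\simeq V^\vee$, applied consistently, would give the diagonal blocks weight $\pm2$ as well. It would force them to vanish, contradicting non-degeneracy. It also contradicts your own closing remark that $\overline{\mathcal{O}(n)}$ has weight $-n$. So this needs an actual correction, not a relabeling.

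A clean fix avoids the bundle bookkeeping, frames and transitivity, and is the pointwise, conjugate-linear form of Lemma \ref{lem 1.5} you were worried about. Fix $x\in X$, nonzero $u_i\in(L_i)_x$, and $y\in\mathbb{P}^1$. The stabilizer of $y$ is a circle $\{k_\theta\}$. It acts on $\mathcal{O}(-1)_y$ by $e^{i\theta}$, on $\mathcal{O}(1)_y$ by $e^{-i\theta}$, and trivially on the $L_i$. Invariance of $h_E$, which is linear in the first slot and conjugate-linear in the second, then gives
\[
h_E(u_1\otimes a,\,u_2\otimes b)=h_E(u_1\otimes e^{i\theta}a,\,u_2\otimes e^{-i\theta}b)=e^{2i\theta}\,h_E(u_1\otimes a,\,u_2\otimes b)
\]
for all $\theta$. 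Hence the off-diagonal block vanishes at every point.

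The same computation for the $(1,1)$ block gives the factor $e^{i\theta}\,\overline{e^{i\theta}}=1$. This is why the diagonal blocks survive, consistent with your Step 2.
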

We now construct a solution of Manton's five $\tau$-vortex equation with $(C_0,C_1)=(1,1)$ or $(-1,1)$ from an ASD connection.
Throughout this argument, we work with the K\"ahler form $\Omega_\tau$ constructed in \eqref{eq 2.7}.
\par
Suppose that $\bar\partial_E$ defines an $SU(2)$-equivariant holomorphic bundle structure on $E$.
Then by Proposition \ref{prop 2.5}, there exists a holomorphic section $\phi$ of $(L_1\otimes L_2^{-1},\bar\partial_{L_1}\otimes \mathrm{Id}+ \mathrm{Id}\otimes\bar\partial_{L^{-1}_2})$ such that 
\be\label{eq 2.9}
\bar\partial_E=
\begin{pmatrix}
p^\ast\bar\partial_{L_1}\otimes\mathrm{Id}+\mathrm{Id}\otimes q^\ast\bar\partial_{\mathcal{O}(-1)}& \sqrt{\frac{1}{\tau}}\cdot p^\ast\phi\otimes q^\ast\alpha\\
0&p^\ast\bar\partial_{L_2}\otimes\mathrm{Id}+\mathrm{Id}\otimes q^\ast\bar\partial_{\mathcal{O}(1)}
\end{pmatrix}
\ee
Let $h_E$ be as in Proposition \ref{prop 2.6}, and let $\nabla_{h_E}$ be the unique connection compatible with $h_E$ and satisfying $\nabla^{0,1}_{h_E}=\bar\partial_E$.
We note that $\nabla_{h_E}$ is the Chern connection if $h_E$ is a Hermitian metric and the connection we constructed in Proposition \ref{prop 1.1} if $h_E$ is a pseudo-Hermitian metric of signature $(1,1)$. 
Then, by Lemma \ref{lem 2.2}, $\partial_{h_E}=\nabla^{1,0}_{h_E}$ has the form 
\be\label{eq 2.10}
\partial_{h_E}=
\begin{pmatrix}
p^\ast\partial_{h_{L_1}}\otimes\mathrm{Id}+\mathrm{Id}\otimes q^\ast\partial_{h^{(-1)}}&0\\
-C_0\cdot\sqrt{\frac{1}{\tau}} \cdot p^\ast (h_{L_1}h^{-1}_{L_2}\overline \phi)\otimes q^\ast \beta& p^\ast\partial_{h_{L_2}}\otimes\mathrm{Id}+\mathrm{Id}\otimes q^\ast\partial_{h^{(1)}}
\end{pmatrix}
\ee
where $C_0=1$ if $h_E$ is a Hermitian metric and $C_0=-1$ if $h_E$ is a pseudo-Hermitian metric of signature $(1,1)$.
We denote the curvature of $\nabla_{h_E}$ by $F_{h_E}$.
We denote by $h_{L_1}h^{-1}_{L_2}$ the Hermitian metric on $L_1\otimes L^{-1}_2$ induced by $h_{L_1}$ and $h_{L_2}$.
\begin{prop}\label{prop 2.7}
\begin{itemize}
\item[(1)] If $h_E$ is a Hermitian metric and $\nabla_{h_E}$ is an ASD connection with respect to $\Omega_\tau$, then the triple 
\[
\big((L_1\otimes L^{-1}_2, \bar\partial_{L_1}\otimes \mathrm{Id}+\mathrm{Id}\otimes \bar\partial_{L^{-1}_2}),\phi,h_{L_1}h^{-1}_{L_2}\big)
\]
satisfies Manton's five $\tau$-vortex equation with $(C_0,C_1)=(1,1)$ on $(X,\omega_X)$.
\item[(2)] If $h_E$ is a pseudo-Hermitian metric of signature $(1,1)$ and $\nabla_{h_E}$ is an ASD connection with respect to $\Omega_\tau$, then the triple 
\[
\big((L_1\otimes L^{-1}_2, \bar\partial_{L_1}\otimes \mathrm{Id}+\mathrm{Id}\otimes \bar\partial_{L^{-1}_2}),\phi,h_{L_1}h^{-1}_{L_2}\big)
\]
satisfies Manton's five $\tau$-vortex equation with $(C_0,C_1)=(-1,1)$ on $(X,\omega_X)$.
\end{itemize}
\end{prop}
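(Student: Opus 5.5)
The plan is to compute the curvature $F_{h_E}$ explicitly from \eqref{eq 2.9} and \eqref{eq 2.10}, as in the proof of the proposition preceding Theorem \ref{thm 2.1}. We then read off the vortex equation from the diagonal blocks of $\Lambda_{\Omega_\tau}F_{h_E}=0$, which is equivalent to the ASD condition by Lemma \ref{lem 1.2}.

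\emph{Normalising the metric.} In the pseudo-Hermitian case, Proposition \ref{prop 2.6} allows $(\epsilon_0,\epsilon_1)=(-1,1)$ or $(1,-1)$. Replacing $h_E$ by $-h_E$ changes neither the compatibility condition nor $\nabla^{0,1}$, so by the uniqueness in Proposition \ref{prop 1.1} it does not change $\nabla_{h_E}$. We may therefore assume $(\epsilon_0,\epsilon_1)=(1,C_0)$, with $C_0=1$ in case (1) and $C_0=-1$ in case (2). With this normalisation \eqref{eq 2.10} holds, by the same argument as Lemma \ref{lem 2.2}.

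\emph{Computing the blocks of $F_{h_E}=\partial_{h_E}\bar\partial_E+\bar\partial_E\partial_{h_E}$.} Write $|\phi|^2:=|\phi|^2_{h_{L_1}h^{-1}_{L_2}}$. Using \eqref{eq 2.1} and \eqref{eq 2.2}, the diagonal blocks are
\begin{align*}
F_{11}&=p^\ast F_{h_{L_1}}+2\pi\sqrt{-1}\,q^\ast\omega_{\mathrm{FS}}-2\pi\sqrt{-1}\,\frac{C_0}{\tau}\,p^\ast|\phi|^2\, q^\ast\omega_{\mathrm{FS}},\\
F_{22}&=p^\ast F_{h_{L_2}}-2\pi\sqrt{-1}\,q^\ast\omega_{\mathrm{FS}}+2\pi\sqrt{-1}\,\frac{C_0}{\tau}\,p^\ast|\phi|^2\, q^\ast\omega_{\mathrm{FS}}.
\end{align*}
Unlike the direct construction, $L_1$ and $L_2$ are not assumed to be mutually dual square roots, so $F_{22}\neq -F_{11}$ in general and both blocks must be used.

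The off-diagonal blocks are
\[
F_{12}=\sqrt{\tfrac{1}{\tau}}\;p^\ast(\partial\phi)\otimes q^\ast\alpha,
\qquad
F_{21}=-C_0\sqrt{\tfrac{1}{\tau}}\;p^\ast\bar\partial\big(h_{L_1}h^{-1}_{L_2}\overline\phi\big)\otimes q^\ast\beta.
\]
Here we use $\partial_{h^{(-2)}}\alpha=0$ and $\bar\partial\beta=0$, exactly as before. Both blocks are of mixed $X$--$\mathbb{P}^1$ type, so they are annihilated by $\Lambda_{\Omega_\tau}$ automatically. Hence the ASD condition reduces to $\Lambda_{\Omega_\tau}F_{11}=\Lambda_{\Omega_\tau}F_{22}=0$.

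\emph{Contracting and subtracting.} Since $\Lambda_{\Omega_\tau}p^\ast F=\frac{4\pi}{\tau}p^\ast\Lambda_{\omega_X}F$ and $\Lambda_{\Omega_\tau}q^\ast\omega_{\mathrm{FS}}=1$, the two conditions become
\[
\sqrt{-1}\Lambda_{\omega_X}F_{h_{L_1}}=\tfrac12\big(\tau-C_0|\phi|^2\big),\qquad
\sqrt{-1}\Lambda_{\omega_X}F_{h_{L_2}}=-\tfrac12\big(\tau-C_0|\phi|^2\big).
\]
Subtracting and using $F_{h_{L_1}h^{-1}_{L_2}}=F_{h_{L_1}}-F_{h_{L_2}}$ gives
\[
\sqrt{-1}\Lambda_{\omega_X}F_{h_{L_1}h_{L_2}^{-1}}+C_0|\phi|^2-\tau=0,
\]
which is the required equation with $(C_0,C_1)=(1,1)$ in case (1) and $(-1,1)$ in case (2). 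Holomorphicity of $\phi$ is given by Proposition \ref{prop 2.5}.

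\emph{Main obstacle.} The delicate point is the bookkeeping of signs in the pseudo-Hermitian case. One must check that \eqref{eq 2.10}, with lower-left entry $-C_0\sqrt{1/\tau}\,h_{L_1}h^{-1}_{L_2}\overline\phi\otimes\beta$, really follows from compatibility with $h_E$ for $(\epsilon_0,\epsilon_1)=(1,-1)$. One must also track the sign of $\alpha\wedge\beta$ against $\beta\wedge\alpha$, since this determines that $C_0|\phi|^2$ enters with the correct sign relative to $\tau$. I would verify both by a local computation on $\mathbb{C}_z$, using the explicit $\alpha$, $\beta$ and $h^{(\pm1)}$.
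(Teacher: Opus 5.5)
Your proposal is correct and follows the paper's proof essentially verbatim. You compute $F_{11}$ and $F_{22}$ from \eqref{eq 2.9} and \eqref{eq 2.10}, impose $\Lambda_{\Omega_\tau}F_{11}=\Lambda_{\Omega_\tau}F_{22}=0$, and subtract. Your normalisation $h_E\mapsto -h_E$, which justifies \eqref{eq 2.10} when $(\epsilon_0,\epsilon_1)=(-1,1)$, is a valid detail the paper leaves implicit. The off-diagonal computation is harmless but unnecessary for this direction, since ASD already gives the vanishing of the diagonal contractions.
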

\begin{proof}
Since $E$ is a rank $2$ bundle, we can write $F_{h_E}$ as
\[
F_{h_E}=
\begin{pmatrix}
F_{11}&F_{12}\\
F_{21}&F_{22}
\end{pmatrix}.
\]
Whether $h_E$ is a Hermitian metric or a pseudo-Hermitian metric of signature $(1,1)$, $\partial_{h_E}^2=0$ and therefore 
\[
F_{h_E}=\partial_{h_E}\bar\partial_E+\bar\partial_E\partial_{h_E}.
\]
Since $\bar\partial_{E}$ and $\partial_{h_E}$ have the forms given in \eqref{eq 2.9} and \eqref{eq 2.10}, we have 
\[
\begin{split}
F_{11}&=p^\ast F_{h_{L_1}}+q^\ast F_{h^{(-1)}}-\frac1\tau\cdot C_0p^\ast|\phi|^2_{h_{L_1}h^{-1}_{L_2}}\otimes q^\ast (\alpha\wedge\beta)\\
&=p^\ast F_{h_{L_1}}+2\pi\sqrt{-1}q^\ast \omega_{\mathrm{FS}}-\frac1\tau\cdot C_0p^\ast|\phi|^2_{h_{L_1}h^{-1}_{L_2}}\otimes q^\ast (2\pi\sqrt{-1}\omega_{\mathrm{FS}})
\end{split}
\]
and 
\[
\begin{split}
F_{22}&=p^\ast F_{h_{L_2}}+q^\ast F_{h^{(1)}}-\frac1\tau\cdot C_0p^\ast|\phi|^2_{h_{L_1}h^{-1}_{L_2}}\otimes q^\ast (\beta\wedge\alpha)\\
&=p^\ast F_{h_{L_2}}-2\pi\sqrt{-1}q^\ast \omega_{\mathrm{FS}}+\frac1\tau\cdot C_0p^\ast|\phi|^2_{h_{L_1}h^{-1}_{L_2}}\otimes q^\ast (2\pi\sqrt{-1}\omega_{\mathrm{FS}}).
\end{split}
\]
We note that we used the relations \eqref{eq 2.1} and \eqref{eq 2.2} in the equations above.
We now assume that $\nabla_{h_E}$ is an ASD connection with respect to the K\"ahler form $\Omega_\tau$.
Then, $\Lambda_{\Omega_\tau}F_{h_E}=0$ by Lemma \ref{lem 1.2}.
This implies $\Lambda_{\Omega_\tau}F_{11}=0$ and $\Lambda_{\Omega_\tau}F_{22}=0$.
Therefore, we have 
\begin{align*}
&0=\Lambda_{\Omega_\tau}F_{11}=\frac{4\pi}{\tau}p^\ast (\Lambda_{\omega_X}F_{h_{L_1}})+2\pi\sqrt{-1}-\frac{2\pi\sqrt{-1}}{\tau}\cdot C_0p^\ast|\phi|^2_{h_{L_1}h^{-1}_{L_2}},\\
&0=\Lambda_{\Omega_\tau}F_{22}=\frac{4\pi}{\tau}p^\ast (\Lambda_{\omega_X}F_{h_{L_2}})-2\pi\sqrt{-1}+\frac{2\pi\sqrt{-1}}{\tau}\cdot C_0p^\ast|\phi|^2_{h_{L_1}h^{-1}_{L_2}}.
\end{align*}
Subtracting the second equation from the first equation, we obtain
\be\label{eq 2.11}
\begin{split}
0&=\frac{4\pi}{\tau}p^\ast (\Lambda_{\omega_X}F_{h_{L_1}}-\Lambda_{\omega_X}F_{h_{L_2}})-\frac{4\pi\sqrt{-1}}{\tau}\cdot C_0p^\ast|\phi|^2_{h_{L_1}h^{-1}_{L_2}}+4\pi\sqrt{-1}\\
&=-\frac{4\pi\sqrt{-1}}{\tau}p^\ast\bigg(\sqrt{-1}\Lambda_{\omega_X}F_{h_{L_1}h^{-1}_{L_2}}+C_0|\phi|^2_{h_{L_1}h^{-1}_{L_2}}-\tau\bigg).
\end{split}
\ee
Here $F_{h_{L_1}h^{-1}_{L_2}}$ is the curvature of the Chern connection associated with $(L_1\otimes L^{-1}_2,\bar\partial_{L_1}\otimes \mathrm{Id}+\mathrm{Id}\otimes\bar\partial_{L^{-1}_2})$ and $h_{L_1}h^{-1}_{L_2}$.
The equation \eqref{eq 2.11} implies 
\[
\sqrt{-1}\Lambda_{\omega_X}F_{h_{L_1}h^{-1}_{L_2}}+C_0|\phi|^2_{h_{L_1}h^{-1}_{L_2}}-\tau=0.
\]
Recall that $C_0=1$ if $h_E$ is a Hermitian metric and $C_0=-1$ if $h_E$ is a pseudo-Hermitian metric of signature $(1,1)$.
Hence the triple 
\[
\big((L_1\otimes L^{-1}_2, \bar\partial_{L_1}\otimes \mathrm{Id}+\mathrm{Id}\otimes \bar\partial_{L^{-1}_2}),\phi,h_{L_1}h^{-1}_{L_2}\big)
\]
satisfies Manton's five $\tau$-vortex equation with $(C_0,C_1)=(1,1)$ on $(X,\omega_X)$ if $h_E$ is a Hermitian metric, and with $(C_0,C_1)=(-1,1)$ if $h_E$ is a pseudo-Hermitian metric of signature $(1,1)$.
\end{proof}
We summarize the results so far.
\begin{thm}\label{thm 2.2}
Let $X$ be a Riemann surface with a K\"ahler form $\omega_X$. 
We fix a constant $\tau>0$.
Let $(L_1,\bar\partial_{L_1})$ be a holomorphic line bundle over $X$.
We assume $\mathrm{deg}L_1>0$ if $X$ is compact.
Let $(L_2,\bar\partial_{L_2})$ be a holomorphic line bundle over $X$.
We assume 
\[
\mathrm{deg}L_2=-\mathrm{deg}L_1
\]
if $X$ is compact.
\par
Let $E$ be a holomorphic bundle over $X\times \mathbb{P}^1$ that is an extension of 
$(p^\ast L_2\otimes q^\ast \mathcal{O}(1),p^\ast\bar\partial_{L_2}\otimes\mathrm{Id}+\mathrm{Id}\otimes q^\ast\bar\partial_{\mathcal{O}(1)})$
by $(p^\ast L_1\otimes q^\ast \mathcal{O}(-1),p^\ast\bar\partial_{L_1}\otimes\mathrm{Id}+\mathrm{Id}\otimes q^\ast\bar\partial_{\mathcal{O}(-1)})$.
We denote by $\bar\partial_E$ the holomorphic structure of $E$.
Then, we obtain the following results:
\begin{itemize}
\item[(1)] If we assume 
\begin{itemize}
\item[1.1.] $(E,\bar\partial_E)$ is an $SU(2)$-equivariant holomorphic bundle.
\item[1.2.] $E$ admits an $SU(2)$-invariant Hermitian metric $h_E$.
\item[1.3.] The Chern connection associated with $\bar\partial_E$ and $h_E$ is an ASD connection with respect to $\Omega_\tau$.
\end{itemize}
Then, there exists a holomorphic section $\phi$ of $(L_1\otimes L_2^{-1},\bar\partial_{L_1}\otimes \mathrm{Id}+ \mathrm{Id}\otimes\bar\partial_{L^{-1}_2})$
and Hermitian metrics $h_{L_1}$ and $h_{L_2}$ of $L_1$ and $L_2$ such that the triple 
\[
\big((L_1\otimes L^{-1}_2, \bar\partial_{L_1}\otimes \mathrm{Id}+\mathrm{Id}\otimes \bar\partial_{L^{-1}_2}),\phi,h_{L_1}h^{-1}_{L_2}\big)
\]
satisfies Manton's five $\tau$-vortex equation with $(C_0,C_1)=(1,1)$ on $(X,\omega_X)$.
\item[(2)]
If we assume 
\begin{itemize}
\item[2.1.] $(E,\bar\partial_E)$ is an $SU(2)$-equivariant holomorphic bundle.
\item[2.2.] $E$ admits an $SU(2)$-invariant pseudo-Hermitian metric $h_E$ of signature $(1,1)$.
\item[2.3.] The unique connection $\nabla_{h_E}$ satisfying $\nabla^{0,1}_{h_E}=\bar\partial_E$ and compatible with $h_E$ is an ASD connection with respect to $\Omega_\tau$.
\end{itemize}
Then, there exists a holomorphic section $\phi$ of $(L_1\otimes L_2^{-1},\bar\partial_{L_1}\otimes \mathrm{Id}+ \mathrm{Id}\otimes\bar\partial_{L^{-1}_2})$
and Hermitian metrics $h_{L_1}$ and $h_{L_2}$ of $L_1$ and $L_2$ such that the triple 
\[
\big((L_1\otimes L^{-1}_2, \bar\partial_{L_1}\otimes \mathrm{Id}+\mathrm{Id}\otimes \bar\partial_{L^{-1}_2}),\phi,h_{L_1}h^{-1}_{L_2}\big)
\]
satisfies Manton's five $\tau$-vortex equation with $(C_0,C_1)=(-1,1)$ on $(X,\omega_X)$.
\end{itemize}
\end{thm}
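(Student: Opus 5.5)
The statement is Theorem \ref{thm 2.2}, which summarizes Propositions \ref{prop 2.5}, \ref{prop 2.6} and \ref{prop 2.7}. The plan is to assemble these three results in order, handling cases (1) and (2) together and tracking the sign only at the end.

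\emph{Step 1: the off-diagonal term.} By assumption 1.1 (resp.\ 2.1), Proposition \ref{prop 2.5} applies, so the extension class is $\psi=p^\ast\phi\otimes q^\ast\alpha$ with $\phi$ a holomorphic section of $L_1\otimes L_2^{-1}$. Rescaling $\phi$ by $\sqrt\tau$ (which preserves holomorphicity) puts $\bar\partial_E$ in the normalized form \eqref{eq 2.9}.

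\emph{Step 2: the metric.} By assumption 1.2 (resp.\ 2.2), Proposition \ref{prop 2.6} gives Hermitian metrics $h_{L_1},h_{L_2}$ such that $h_E$ is diagonal with signs $(\epsilon_0,\epsilon_1)$. In case (1) we have $(\epsilon_0,\epsilon_1)=(1,1)$. In case (2) the signs are $(1,-1)$ or $(-1,1)$. Multiplying $h_E$ by $-1$ changes neither the compatibility condition nor the connection of Proposition \ref{prop 1.1}, so we may assume $(\epsilon_0,\epsilon_1)=(1,-1)$ without loss of generality.

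\emph{Step 3: the $(1,0)$-part.} Next we derive $\partial_{h_E}$ in the form \eqref{eq 2.10}, with $C_0=\epsilon_0\epsilon_1$. This is the analogue of Lemma \ref{lem 2.2} with $L^{\pm1/2}$ replaced by $L_1,L_2$. The proof is the same compatibility computation: the lower-left entry is the $h_E$-adjoint of the upper-right entry of \eqref{eq 2.9}, namely $-\epsilon_0\epsilon_1\sqrt{1/\tau}\,h_{L_1}h_{L_2}^{-1}\bar\phi\otimes\beta$. The sign factor $\epsilon_0\epsilon_1$ appears because the adjoint is computed with respect to an indefinite form.

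\emph{Step 4: the curvature.} Write $F_{h_E}=\partial_{h_E}\bar\partial_E+\bar\partial_E\partial_{h_E}$ and compute the diagonal blocks $F_{11}$ and $F_{22}$, using \eqref{eq 2.1} and \eqref{eq 2.2}. Assumption 1.3 (resp.\ 2.3) together with Lemma \ref{lem 1.2} gives $\Lambda_{\Omega_\tau}F_{11}=\Lambda_{\Omega_\tau}F_{22}=0$. Here $\Lambda_{\Omega_\tau}$ acts on pullbacks from $X$ as $\frac{4\pi}{\tau}\Lambda_{\omega_X}$ and sends $\omega_{\mathrm{FS}}$ to $1$. Subtracting the two equations and using $F_{h_{L_1}h_{L_2}^{-1}}=F_{h_{L_1}}-F_{h_{L_2}}$ yields
\[
\sqrt{-1}\Lambda_{\omega_X}F_{h_{L_1}h^{-1}_{L_2}}+C_0|\phi|^2_{h_{L_1}h^{-1}_{L_2}}-\tau=0,
\]
with $C_0=1$ in case (1) and $C_0=-1$ in case (2). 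This is exactly the required vortex equation.

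\emph{Where the difficulty lies.} The main potential obstacle is the sign bookkeeping in Steps 2 and 3. In particular, we must check that the normalization $\epsilon_0=1$ is legitimate, and that the degree hypotheses (which match the instanton-number setting of Lemma \ref{lem 2.3}) do not interfere. The degree conditions play no role in the local computation itself; they serve only to make the setup consistent with the construction in Theorem \ref{thm 2.1}.
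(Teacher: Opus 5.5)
Your proposal is correct and follows the paper's proof essentially step by step. You use Proposition \ref{prop 2.5} (with the $\sqrt{\tau}$ rescaling) for the off-diagonal term, Proposition \ref{prop 2.6} for the diagonal form of $h_E$, the adjoint formula \eqref{eq 2.10} for $\partial_{h_E}$ with $C_0=\epsilon_0\epsilon_1$, and then subtract $\Lambda_{\Omega_\tau}F_{22}=0$ from $\Lambda_{\Omega_\tau}F_{11}=0$. Your normalization $h_E\mapsto -h_E$ to reach $(\epsilon_0,\epsilon_1)=(1,-1)$ is legitimate, since it leaves the connection and $h_{L_1},h_{L_2}$ unchanged; it simply repackages the paper's observation that $C_0=-1$ for either signature-$(1,1)$ sign pattern.
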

Let $((L_1\otimes L^{-1}_2, \bar\partial_{L_1}\otimes \mathrm{Id}+\mathrm{Id}\otimes \bar\partial_{L^{-1}_2}),\phi,h_{L_1}h^{-1}_{L_2}\big)$ be as in Theorem \ref{thm 2.2}.
We assume that the triple satisfies Manton's five $\tau$-vortex equations with $(C_0,C_1)=(1,1)$ for now.
We set
\begin{align*}
&M:=L_1\otimes L^{-1}_2,\\
&\bar\partial_M:=\bar\partial_{L_1}\otimes \mathrm{Id}+\mathrm{Id}\otimes \bar\partial_{L^{-1}_2},\\
&h_M:=h_{L_1}h^{-1}_{L_2}.
\end{align*}
Since we assumed that $\mathrm{deg}L_1=-\mathrm{deg}L_2$ if $X$ is compact, the line bundle $(M,\bar\partial_M)$ satisfies Assumption \eqref{eq 2.3}.
Hence, applying $(1)$ of Theorem \ref{thm 2.1} to the triple $((M,\bar\partial_M),\phi,h_M)$, we obtain an $SU(2)$-equivariant holomorphic bundle $(F,\bar\partial_F)$ over $X\times \mathbb{P}^1$ and an $SU(2)$-invariant Hermitian metric $h_F$ of $F$ such that the Chern connection $\nabla_{h_F}$ associated with $\bar\partial_F$ and $h_{F}$ is an ASD connection with respect to $\Omega_\tau$.\par
We clarify the relation between $(E,\bar\partial_E,h_E)$ in Theorem \ref{thm 2.2} and $(F,\bar\partial_F,h_F)$.
By Theorem \ref{thm 2.1}, $(F,\bar\partial_F)$ and $h_F$ have the following forms:
\begin{align*}
F&=p^\ast M^{1/2}\otimes q^\ast \mathcal{O}(-1)\oplus p^\ast M^{-1/2}\otimes q^\ast\mathcal{O}(1),\\
\bar\partial_F&=
\begin{pmatrix}
p^\ast\bar\partial_{M^{1/2}}\otimes \mathrm{Id}+\mathrm{Id}\otimes q^\ast\bar\partial_{\mathcal{O}(-1)}&\sqrt{\frac1\tau}\cdot p^\ast\phi\otimes q^\ast\alpha\\
0&p^\ast\bar\partial_{M^{-1/2}}\otimes \mathrm{Id}+\mathrm{Id}\otimes q^\ast\bar\partial_{\mathcal{O}(1)}
\end{pmatrix},\\
h_F&=\begin{pmatrix}
p^\ast h^{1/2}_M\otimes q^\ast h^{(-1)}&0\\
0&p^\ast h^{-1/2}_M\otimes q^\ast h^{(1)}
\end{pmatrix}.
\end{align*}
We consider the holomorphic line bundle 
\[
(N,\bar\partial_N):=(L_1\otimes M^{-1/2},\bar\partial_{L_1}\otimes \mathrm{Id}+\mathrm{Id}\otimes \bar\partial_{M^{-1/2}})
\]
 over $X$ and a Hermitian metric 
 \[
 h_N:=h_{L_1}h_{M^{-1/2}}
 \]
 on it.
We denote by $(F,\bar\partial_F)\otimes (p^\ast N,p^\ast \bar\partial_N)$ the tensor product of holomorphic bundles.
Then, by the definition of $(M,\bar\partial_M)$ and $h_M$, we have
\begin{align*}
(E,\bar\partial_E)&\simeq (F,\bar\partial_F)\otimes (p^\ast N,p^\ast \bar\partial_N),\\
h_E&=h_F\otimes p^\ast h_N.
\end{align*}
The case $(C_0,C_1)=(-1,1)$ is completely analogous.
In this case, $h_E$ and $h_F$ are $SU(2)$-invariant pseudo-Hermitian metrics of signature $(1,1)$.
By Proposition \ref{prop 2.6}, we have
\[
(\epsilon_0,\epsilon_1)=(1,-1)
\quad\text{or}\quad
(\epsilon_0,\epsilon_1)=(-1,1).
\]
We define the holomorphic line bundle $(N,\bar\partial_N)$ and the Hermitian metric $h_N$ as above.
Then
\[
(E,\bar\partial_E)
\simeq
(F,\bar\partial_F)\otimes
(p^\ast N,p^\ast\bar\partial_N).
\]
Moreover, if $(\epsilon_0,\epsilon_1)=(1,-1)$, then
\[
h_E=h_F\otimes p^\ast h_N,
\]
whereas if $(\epsilon_0,\epsilon_1)=(-1,1)$, then
\[
h_E=h_F\otimes p^\ast(-h_N).
\]

\section{$(C_0,C_1)=(-1,-1)$ Case}\label{sec 3}
\subsection{$SU(1,1)$-equivariant line bundle over $\Delta$}\label{sec 3.1}
In this section, we briefly recall the $SU(1,1)$-action on 
\[
\Delta:=\{z\in\mathbb{C}\mid |z|<1\}.
\]
For further details, we refer the reader to \cite[Section 3.3.1]{Ono}.
\par
Recall that the group $SU(1,1)$ which is the special unitary group of signature $(1,1)$ is a non-compact Lie group defined as
\[
SU(1,1)=\bigg\{
\begin{pmatrix}
a&b\\
\bar b&\bar a
\end{pmatrix}\in M_2(\mathbb{C})
\,\,
\bigg|
\,\,
 |a|^2-|b|^2=1\bigg\}.
\]
The group $SU(1,1)$ acts on $\Delta$ as 
\begin{equation*}
\begin{array}{rccc}
& SU(1,1)\times\Delta &\longrightarrow& \Delta                    \\
        & \rotatebox{90}{$\in$}&               & \rotatebox{90}{$\in$} \\
        &\bigg( \begin{pmatrix}
a&b\\
\overline b&\overline{a}
\end{pmatrix}   , z  \bigg)              & \longmapsto   &  \frac{az+b}{\overline b z+\bar a}.
\end{array}
\end{equation*}
It is clear from the definition that the action is holomorphic.
Let $z\in\Delta$.
The action is transitive and the stabilizer subgroup of $0\in \Delta$ is
\begin{equation*}
\mathrm{Stab}_0=\bigg\{
\begin{pmatrix}
e^{i\theta}&0\\
0&e^{-i\theta}
\end{pmatrix}\
\,\,
\bigg|
\,\,
\theta\in\mathbb{R}\bigg\}\simeq U(1).
\end{equation*}
Therefore we identify $\Delta$ with the homogeneous space $SU(1,1)/U(1)$.
We denote by $\underline{\mathbb{C}}_\Delta$ the trivial line bundle over $\Delta$.
The standard Dolbeault operator $\bar\partial_\Delta$ of $\Delta$ endows $\underline{\mathbb{C}}_\Delta$ with its standard holomorphic structure.
Let $e$ be the canonical holomorphic frame of $\underline{\mathbb{C}}_\Delta$, and denote by $e_z:=e(z)$ its value at $z\in\Delta$.
\par
Let 
$g=
\begin{pmatrix}
a&b\\
\bar b&\bar a
\end{pmatrix}\in SU(1,1).$
For every $n\in\mathbb{Z}$ we define an $SU(1,1)$-action on $\underline{\mathbb{C}}_\Delta$ as 
\[
g\cdot e_z:=\frac{1}{(\bar bz+\bar a)^n}e_{g\cdot z}.
\]
This action is holomorphic. 
Hence $(\underline{\mathbb{C}}_\Delta,\bar\partial_\Delta)$, endowed with this action, is an $SU(1,1)$-equivariant holomorphic line bundle.
For
$
k_\theta=
\begin{pmatrix}
e^{i\theta}&0\\
0&e^{-i\theta}
\end{pmatrix}
\in \mathrm{Stab}_0,
$
we have
\[
k_\theta\cdot e_0=\frac{1}{(e^{-i\theta})^n}e_0=e^{in\theta}e_0.
\]
Thus, the induced $U(1)$-representation on the fiber over $0$ has weight $n$.
\par
For later use, we denote this $SU(1,1)$-equivariant holomorphic line bundle by $M_n$. 
Then $M_n$ is the weight $n$ $SU(1,1)$-equivariant holomorphic line bundle over $\Delta\simeq SU(1,1)/U(1)$ (See Section \ref{sec 1.5}).
 Its underlying holomorphic line bundle is still
\[
(\underline{\mathbb{C}}_\Delta,\bar\partial_\Delta),
\]
and we continue to denote its Dolbeault operator by $\bar\partial_\Delta$.
Although the underlying holomorphic frame is the same canonical frame $e$ of $\underline{\mathbb{C}}_\Delta$, we denote it by $e_n$ when it is regarded as a frame of
$M_n$, in order to indicate the weight. Thus the $SU(1,1)$-action on $M_n$ is written as
\[
g\cdot (e_n)_z=\frac{1}{(\overline b z+\overline a)^n}(e_n)_{g\cdot z}.
\]
\par
Since the $SU(1,1)$-action on $\Delta$ is holomorphic, it induces a natural $SU(1,1)$-action on $T^{1,0}\Delta$ by push-forward as follows.
For $v_z\in T^{1,0}_{z}\Delta$, we define
\[
g\cdot v_z:=(dg)_z(v_z)\in T^{1,0}_{g\cdot z}\Delta.
\]
We describe this induced action explicitly.
Let
$
g=
\begin{pmatrix}
a&b\\
\overline b&\overline a
\end{pmatrix}\in SU(1,1).
$
Since
$
g\cdot z=\frac{az+b}{\overline b z+\overline a},
$
we have
\[
g\cdot\left(\frac{\partial}{\partial z}\bigg|_z\right)
=\frac{1}{(\overline b z+\overline a)^2}\frac{\partial}{\partial z}\bigg|_{g\cdot z}.
\]
Since $T^{1,0*}\Delta$ is the dual bundle of $T^{1,0}\Delta$, the action on $T^{1,0}\Delta$ naturally induces an $SU(1,1)$-action on $T^{1,0*}\Delta$.
With respect to the frame $dz$, this induced action is given by
\[
g\cdot (dz)_z
=
(\overline b z+\overline a)^2(dz)_{g\cdot z}.
\]
By \cite[Section 3.3.1]{Ono}, the induced $U(1)$-representation on $T^{1,0*}_0\Delta$ has weight $-2$.
Therefore
\[
T^{1,0\ast}\Delta\simeq M_{-2}
\]
as $SU(1,1)$-equivariant holomorphic line bundles.\par
Taking the complex conjugate of the above $SU(1,1)$-action on $T^{1,0*}\Delta$, we obtain a smooth $SU(1,1)$-action on $T^{0,1*}\Delta$ given by
\[
g\cdot (d\overline z)_z
=
(b\overline z+a)^2(d\overline z)_{g\cdot z} 
\]
and the induced $U(1)$-representation on $T^{0,1*}_0\Delta$ has weight $2$.
Therefore
\[
T^{0,1*}\Delta\simeq M_2
\]
as smooth $SU(1,1)$-equivariant complex line bundles.
Then by Lemma \ref{lem 1.4}, we have 
\begin{lem}\label{lem 3.1}
\begin{itemize}
\item[(1)] The space $A^{0,1}(M_{-2})^{SU(1,1)}$ is one-dimensional and is generated by  
\[
\alpha=\frac{1}{(1-|z|^2)^2}d\bar z\otimes e_{-2}.
\]
\item[(2)] The space $A^{1,0}(M_{2})^{SU(1,1)}$ is one-dimensional and is generated by  
\[
\beta=dz\otimes e_{2}.
\]
\end{itemize}
\end{lem}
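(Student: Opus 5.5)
The plan is to combine the equivariant identifications just established with the general facts of Section \ref{sec 1.5}. As $SU(1,1)$-equivariant bundles, $T^{0,1*}\Delta\simeq M_2$ and $T^{1,0*}\Delta\simeq M_{-2}$. Hence there are equivariant isomorphisms
\[
A^{0,1}(M_{-2})=A(T^{0,1*}\Delta\otimes M_{-2}),\qquad T^{0,1*}\Delta\otimes M_{-2}\simeq M_{2}\otimes M_{-2}\simeq M_0,
\]
and similarly $T^{1,0*}\Delta\otimes M_2\simeq M_{-2}\otimes M_2\simeq M_0$, using Lemma \ref{lem 1.4} for the additivity of weights under tensor products. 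By Lemma \ref{lem 1.5}(2), the space of $SU(1,1)$-equivariant sections of the weight $0$ bundle is one-dimensional. So both $A^{0,1}(M_{-2})^{SU(1,1)}$ and $A^{1,0}(M_2)^{SU(1,1)}$ are one-dimensional.

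An equivariant section is determined by its value at $0$. Conversely, any nonzero equivariant section is nowhere vanishing, since it is obtained by transporting its nonzero value at $0$ by the transitive action. Therefore it suffices to check that the displayed $\alpha$ and $\beta$ are invariant; being nonzero, they then generate the respective spaces.

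For $\beta=dz\otimes e_2$, the action of $g$ gives
\[
g\cdot\big((dz)_z\otimes (e_2)_z\big)=(\bar b z+\bar a)^2(dz)_{g\cdot z}\otimes(\bar bz+\bar a)^{-2}(e_2)_{g\cdot z}=\beta_{g\cdot z},
\]
so $\beta$ is invariant. For $\alpha$, the action of $g$ multiplies $d\bar z\otimes e_{-2}$ by $(b\bar z+a)^2(\bar bz+\bar a)^2=|\bar bz+\bar a|^4$. To compensate, I will use the standard identity
\[
1-|g\cdot z|^2=\frac{1-|z|^2}{|\bar bz+\bar a|^2},
\]
which follows from $|a|^2-|b|^2=1$ by expanding $|\bar bz+\bar a|^2-|az+b|^2$. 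With this identity,
\[
g\cdot\frac{1}{(1-|z|^2)^2}=\frac{1}{|\bar bz+\bar a|^4(1-|g\cdot z|^2)^2},
\]
and the factors $|\bar bz+\bar a|^4$ cancel, giving $g\cdot\alpha_z=\alpha_{g\cdot z}$. The bidegrees are immediate: $\alpha$ has type $(0,1)$ and $\beta$ has type $(1,0)$.

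The main point requiring care is the sign and conjugation bookkeeping in the action on $d\bar z$, namely checking that the pushforward convention really yields the factor $(b\bar z+a)^2$ so that the modulus-fourth-power cancellation works. The rest is formal, following from Lemmas \ref{lem 1.4} and \ref{lem 1.5}.
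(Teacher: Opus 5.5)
Your proposal is correct and follows the paper's route. The paper deduces the lemma directly from the identifications $T^{1,0*}\Delta\simeq M_{-2}$ and $T^{0,1*}\Delta\simeq M_2$, together with the weight additivity of Lemma \ref{lem 1.4} and the weight-$0$ case of Lemma \ref{lem 1.5}, exactly as you do; your explicit invariance check of $\beta$, and of $\alpha$ via $1-|g\cdot z|^2=(1-|z|^2)/|\bar b z+\bar a|^2$, fills in a verification the paper leaves implicit.
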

We next define an $SU(1,1)$-invariant Hermitian metric on $M_n$.
For every $n\in\mathbb{Z}$, we  define an $SU(1,1)$-invariant Hermitian metric $h^{(n)}$ on $M_n$ as
\[
h^{(n)}_z(e_{n,z},e_{n,z}):=\frac{1}{(1-|z|^2)^n}.
\]
The $SU(1,1)$-invariance can be checked by direct calculation (See \cite[Section 3.3.1]{Ono}).
\par
For later use, we recall the Poincar\'e metric on $\Delta$ and compare its associated K\"ahler form with the curvature of the Chern connection associated with $(\bar\partial_\Delta,h^{(n)})$ on $M_{n}$.
The Poincar\'e metric on $\Delta$ is given by
\[
g_{\mathrm{Poi}}=\frac{4\,dz\,d\overline z}{(1-|z|^2)^2}.
\]
The associated K\"ahler form is
\[
\omega_{\mathrm{Poi}}
=
\frac{2\sqrt{-1}\,dz\wedge d\overline z}{(1-|z|^2)^2}.
\]
Let $F_{h^{(n})}$ be the curvature of the Chern connection associated with $(\bar\partial_\Delta,h^{(n)})$ on $M_{n}$.
Then, with respect to the frame $e_{n}$, $F_{h^{(n)}}$ has the form 
\begin{equation}\label{eq 3.1}
\begin{split}
F_{h^{(n)}}=\frac{\sqrt{-1}n}{2}\omega_{\mathrm{Poi}}.
\end{split}
\end{equation}
Let $\alpha$ and $\beta$ be as in Lemma \ref{lem 3.1}.
We have 
\be\label{eq 3.2}
\alpha\wedge\beta=\frac{\sqrt{-1}}{2}\omega_{\mathrm{Poi}}.
\ee

\subsection{From Vortex Equations to ASD Connections}\label{sec 3.2}
Let $X$ be a Riemann surface with the K\"ahler form $\omega_X$.
We fix this notation throughout this section.
We note that we do not assume $X$ to be compact.\par
Let $(L,\bar\partial_L)$ be a holomorphic line bundle over $X$. 
We always assume that $(L,\bar\partial_L)$ satisfies the assumption \ref{eq 2.3}.
Let $h_L$ be a Hermitian metric of $L$ and $\phi$ be a holomorphic section of $(L,\bar\partial_L)$.
Let $\tau>0$ be a positive constant.
Suppose that the triple  $((L,\bar\partial_L),\phi,h_L)$ satisfies Manton's five $\tau$-vortex equation with $(C_0,C_1)=(-1,-1)$, which is equivalent to 
\be\label{eq 3.3}
\sqrt{-1}\Lambda_{\omega_X}F_{h_L}-|\phi|^2_{h_L}+\tau=0.
\ee
We construct an ASD connection on $X\times\Delta$ from the triple.
A large part of the result is analogous to Section \ref{sec 2.2}.
However, we needed to separate the section since we use the objects associated with the $SU(1,1)$-action on $\Delta$, while in Section \ref{sec 2}, we used the objects associated with the $SU(2)$-action on $\mathbb{P}^1.$
Let
\begin{align*}
&p:X\times \Delta\to X\\
&q:X\times \Delta\to \Delta
\end{align*}
 be the natural projections.
 We consider the action of $SU(1,1)$ on $X\times \Delta$ which is trivial on $X$ and standard on $\Delta$.
 Recall that $(M_n,\bar\partial_\Delta)$ is the weight $n$ holomorphic line bundle over $\Delta$ (See Section \ref{sec 3.1}).
 We define a bundle by
 \begin{align*}
 E:&=p^\ast L^{1/2}\otimes q^\ast M_{-1}\oplus p^\ast L^{-1/2}\otimes q^\ast M_1.
 \end{align*}
 Here $L^{-1/2}$ is the dual of $L^{1/2}$.
 We denote its holomorphic structure by $\bar\partial_{L^{-1/2}}$.
 The $SU(1,1)$-action on $M_{-1}$ and $M_1$ induces an $SU(1,1)$-action on $E$, where $SU(1,1)$ acts trivially on the factors pulled back from $X$.
By the definition of the action of $SU(1,1)$ on $X\times \Delta$, $E$ is also an $SU(1,1)$-equivariant bundle over $X\times \Delta$.
\par
 We define an operator on $E$ as 
\be\label{eq 3.4}
\bar\partial_E:=
\begin{pmatrix}
p^\ast\bar\partial_{L^{1/2}}\otimes \mathrm{Id}+\mathrm{Id}\otimes q^\ast\bar\partial_\Delta&\sqrt{\frac1\tau}\cdot p^\ast\phi\otimes q^\ast\alpha\\
0&p^\ast\bar\partial_{L^{-1/2}}\otimes \mathrm{Id}+\mathrm{Id}\otimes q^\ast\bar\partial_\Delta
\end{pmatrix}.
\ee
Here $\alpha$ is the differential form defined in Section \ref{sec 3.1}.
Note that 
\[
p^\ast\phi\otimes q^\ast\alpha\in A^{0,1}\bigg(\mathrm{Hom}(p^\ast L^{-1/2}\otimes M_1, p^\ast L^{1/2}\otimes q^\ast M_{-1})\bigg)=A^{0,1}(p^\ast L\otimes q^\ast M_{-2}).
\]
As in Proposition \ref{prop 2.2}, we have
\begin{prop}
$(E,\bar\partial_E)$ is an $SU(1,1)$-equivariant holomorphic bundle over $X\times \Delta$.
\end{prop}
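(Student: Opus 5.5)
To prove that $(E,\bar\partial_E)$ is an $SU(1,1)$-equivariant holomorphic bundle, I would check two things separately: that $\bar\partial_E$ in \eqref{eq 3.4} is integrable, and that each $g\in SU(1,1)$ acts on $E$ by a holomorphic map, i.e. the action commutes with $\bar\partial_E$. This is the same strategy as for Proposition \ref{prop 2.2} (following \cite[Proposition 3.3]{Ono}), with $\mathcal{O}(\pm1)$ on $\mathbb{P}^1$ replaced by $M_{\pm1}$ on $\Delta$.

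\emph{Integrability.} Write $\bar\partial_E=D+\Psi$, where $D$ is the diagonal part and $\Psi=\sqrt{1/\tau}\,p^\ast\phi\otimes q^\ast\alpha$ is strictly upper triangular. Then
\[
\bar\partial_E^2=D^2+D\Psi+\Psi D+\Psi\wedge\Psi .
\]
Here $D^2=0$ because each diagonal entry is a tensor product of holomorphic structures, and $\Psi\wedge\Psi=0$ because $\Psi$ is nilpotent. The mixed term equals $\bar\partial_{\mathrm{Hom}}\Psi$, computed in $p^\ast L\otimes q^\ast M_{-2}$. It splits as
\[
\bar\partial_L\phi\otimes\alpha\;\pm\;\phi\otimes\bar\partial_\Delta\alpha .
\]
The first piece vanishes since $\phi$ is holomorphic. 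The second vanishes since $\alpha$ is a $(0,1)$-form on the one-dimensional $\Delta$, so $\bar\partial_\Delta\alpha\in A^{0,2}(\Delta)=0$. Hence $\bar\partial_E^2=0$. By Newlander--Nirenberg (Koszul--Malgrange), $\bar\partial_E$ defines a holomorphic structure.

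\emph{Holomorphicity of the action.} The action of $g$ on $E$ is $\mathrm{Id}$ on the $p^\ast L^{\pm1/2}$ factors tensored with the holomorphic actions on $M_{\mp1}$ described in Section \ref{sec 3.1}, combined with $(x,z)\mapsto(x,g\cdot z)$ on the base. Write $g^\ast\bar\partial_E$ for the operator conjugated by this action. The diagonal part $D$ is preserved, because the actions on $M_{\pm1}$ are holomorphic with respect to $\bar\partial_\Delta$ and the $X$-factors are untouched. For the off-diagonal part, the conjugated operator contains $\sqrt{1/\tau}\,p^\ast\phi\otimes q^\ast(g^\ast\alpha)$, where $g^\ast\alpha$ is the pullback of $\alpha$ under the induced action on $M_{-2}$-valued forms. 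Since $M_{-1}^{\vee}\otimes M_{-1}^{\vee\vee}$-type Hom bundles carry the weight $-2$ structure by Lemma \ref{lem 1.4}, invariance of $\alpha$ from Lemma \ref{lem 3.1} gives $g^\ast\alpha=\alpha$. Thus $g^\ast\bar\partial_E=\bar\partial_E$, so each $g$ is a biholomorphism of $E$.

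The main point to get right is this last step. One must check that the action on $\mathrm{Hom}(M_1,M_{-1})$ induced from the actions on $M_{\pm1}$ is exactly the $M_{-2}$-action used in Lemma \ref{lem 3.1}. Concretely, for $g=\begin{pmatrix}a&b\\ \bar b&\bar a\end{pmatrix}$, the factors are $(\bar bz+\bar a)^{-1}$ on $M_{-1}$ and $(\bar bz+\bar a)^{+1}$ on the dual of $M_1$, so their product is $(\bar bz+\bar a)^{2}$, which is the $M_{-2}$ factor. I would also verify the invariance of $\alpha$ directly using
\[
g\cdot d\bar z=(b\bar z+a)^2\,d\bar z ,\qquad 1-|g\cdot z|^2=\frac{1-|z|^2}{|\bar bz+\bar a|^2} .
\]
With these, the factors cancel: $(b\bar z+a)^2\,|\bar bz+\bar a|^{-4}\,(\bar bz+\bar a)^2=1$, since $b\bar z+a=\overline{\bar bz+\bar a}$.
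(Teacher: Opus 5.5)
Your proposal is correct and takes essentially the same route the paper intends (it simply refers back to the argument for Proposition \ref{prop 2.2}): $\bar\partial_E^2=0$ follows from $\bar\partial_L\phi=0$ and $\bar\partial_\Delta\alpha=0$, and the action is holomorphic because $\alpha$ is $SU(1,1)$-invariant in $A^{0,1}(M_{-2})$. One slip to fix: the factor on $M_{-1}$ is $(\bar bz+\bar a)^{+1}$, not $(\bar bz+\bar a)^{-1}$; it is this, together with the factor $(\bar bz+\bar a)$ on $M_1^\vee$, that produces the $M_{-2}$ factor $(\bar bz+\bar a)^{2}$ on $\mathrm{Hom}(M_1,M_{-1})=M_1^\vee\otimes M_{-1}$, and this bundle is what should replace your garbled expression $M_{-1}^{\vee}\otimes M_{-1}^{\vee\vee}$.
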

We next define a pseudo-Hermitian metric of signature $(1,1)$ on $E$ as 
\be\label{eq 3.5}
h_E:=
\begin{pmatrix}
p^\ast h^{1/2}_L\otimes q^\ast h^{(-1)}&0\\
0&-p^\ast h^{-1/2}_L\otimes q^\ast h^{(1)}
\end{pmatrix}.
\ee
Here, $h^{1/2}_L$ and $h^{-1/2}_L$ are the Hermitian metric of $L^{1/2}$ and $L^{-1/2}$ induced by $h_L$, and $h^{-1}$ and $h^{(1)}$ are the $SU(1,1)$-invariant Hermitian metrics on $M_{-1}$ and $M_1$.
By the definition of the $SU(1,1)$ action on $E$, $h_E$ is an $SU(1,1)$-invariant pseudo-Hermitian metric.\par
Let $\nabla_{h_E}$ be the unique connection associated with $\bar\partial_E$ and $h_E$ (See Proposition \ref{prop 1.1}).
We denote its $(1,0)$-part by $\partial_{h_E}$.
Then, as in Lemma \ref{lem 2.2}, $\partial_{h_E}$ can be written explicitly as follows
\begin{lem}\label{lem 3.2}
\[
\partial_{h_E}=
\begin{pmatrix}
p^\ast\partial_{h^{1/2}_L}\otimes\mathrm{Id}+\mathrm{Id}\otimes q^\ast \partial_{h^{(-1)}}&0\\
 \sqrt{\frac1\tau}\cdot h_Lp^\ast\overline\phi\otimes q^\ast \beta& p^\ast\partial_{h^{-1/2}_L}\otimes\mathrm{Id}+\mathrm{Id}\otimes q^\ast \partial_{h^{(1)}}
\end{pmatrix}.
\]
Here $\beta$ is the differential form in Section \ref{sec 3.1}.
\end{lem}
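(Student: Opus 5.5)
We verify the formula by exhibiting the candidate operator $D:=D'+\bar\partial_E$, with $D'$ the displayed $(1,0)$-operator, and checking that $D$ is compatible with $h_E$. Then the uniqueness in Proposition \ref{prop 1.1} identifies $D$ with $\nabla_{h_E}$ and $D'$ with $\partial_{h_E}$. Since $h_E$ is non-degenerate, compatibility is equivalent to the pair of conditions
\[
\partial\, h_E(s,t)=h_E(D's,t)+h_E(s,\bar\partial_E t),\qquad \bar\partial\, h_E(s,t)=h_E(\bar\partial_E s,t)+h_E(s,D't),
\]
and the second is the complex conjugate of the first with $s,t$ swapped. It therefore suffices to check the first identity, block by block in the splitting $E=E_1\oplus E_2$ with $E_1=p^\ast L^{1/2}\otimes q^\ast M_{-1}$ and $E_2=p^\ast L^{-1/2}\otimes q^\ast M_1$.

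First, the diagonal entries of $D'$ are the $(1,0)$-parts of the Chern connections of the line bundles $E_1$ and $E_2$ with the product metrics $p^\ast h_L^{\pm1/2}\otimes q^\ast h^{(\mp1)}$. Each Chern connection is compatible with its own metric, and multiplying the metric on $E_2$ by the constant $-1$ does not affect this. Consequently the identity holds for $s,t$ in the same summand once the off-diagonal terms are ignored.

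It remains to handle the off-diagonal entries. Write $\bar\partial_E=\bar\partial_{\mathrm{diag}}+A$ with $A=\sqrt{1/\tau}\,p^\ast\phi\otimes q^\ast\alpha\in A^{0,1}(\mathrm{Hom}(E_2,E_1))$, and $D'=D'_{\mathrm{diag}}+B$ with $B\in A^{1,0}(\mathrm{Hom}(E_1,E_2))$. After subtracting the diagonal identity, the compatibility condition reduces to
\[
h_E(Bs,t)+h_E(s,At)=0\quad\text{for } s\in E_1,\ t\in E_2,
\]
with no condition arising for other index pairs, since $A$ maps into $E_1$ and $B$ maps into $E_2$. This says that $B$ is minus the $h_E$-adjoint of $A$: $B=-A^{\dagger_{h_E}}$. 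The sign $-1$ in the $E_2$ block of $h_E$ converts this into $B=+A^{\ast}$, where $A^\ast$ is the ordinary adjoint computed with the positive metrics. This is exactly the source of the sign difference from Lemma \ref{lem 2.2} with $\epsilon=-1$.

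The main obstacle is computing $A^\ast$ explicitly. In the frame $e_{-2}$, conjugating $\alpha$ and applying $h^{(-2)}(e_{-2},e_{-2})=(1-|z|^2)^2$ turns $\alpha^\ast$ into $\frac{(1-|z|^2)^{2}}{(1-|z|^2)^2}\,dz\otimes e_2=dz\otimes e_2=\beta$. Here one must check that the induced metric identifications $h^{(-2)}$ and $h^{(2)}=(h^{(-2)})^{-1}$ produce exactly $\beta$, with no additional constant factor. On the $X$ factor, the adjoint of $\phi$ is $h_L\bar\phi$. Assembling these gives the entry $\sqrt{1/\tau}\,h_L\,p^\ast\bar\phi\otimes q^\ast\beta$, as claimed.
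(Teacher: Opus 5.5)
Your proposal is correct and is essentially the argument the paper relies on, since it proves this lemma only by reference to Lemma \ref{lem 2.2}. You exhibit the candidate operator, check $h_E$-compatibility block by block, observe that the minus sign on the second block turns the off-diagonal entry into $+A^\ast$ rather than $-A^\ast$, compute $\alpha^\ast=\beta$ from $h^{(-2)}(e_{-2},e_{-2})=(1-|z|^2)^2$ with no stray constant, and conclude by the uniqueness in Proposition \ref{prop 1.1}.
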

This enables us to calculate the curvature of $\nabla_{h_E}$.
We will show that $\nabla_{h_E}$ is an ASD connection with respect to a suitable K\"ahler form on $X\times\Delta$.
\par
We fix a K\"ahler form on $X\times \Delta$ as
\be\label{eq 3.6}
\Omega_\tau:=\bigg(\tau p^\ast\omega_X\bigg)\oplus q^\ast\omega_{\mathrm{Poi}}.
\ee
\begin{prop}
$\nabla_{h_E}$ is an ASD connection with respect to $\Omega_{\tau}$.
\end{prop}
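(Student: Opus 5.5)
The plan is to repeat the argument used for the $SU(2)$ case in Section \ref{sec 2.2}, now with the disk data \eqref{eq 3.1}, \eqref{eq 3.2} in place of \eqref{eq 2.1}, \eqref{eq 2.2}. By Lemma \ref{lem 1.2}, and since $F_{h_E}\in A^{1,1}(\mathrm{End}E)$, it suffices to show
\[
\Lambda_{\Omega_\tau}F_{h_E}=0 .
\]
Because $\nabla_{h_E}$ is compatible with $h_E$ and $\nabla^{0,1}_{h_E}=\bar\partial_E$, we have $\partial_{h_E}^2=\bar\partial_E^2=0$. Hence
\[
F_{h_E}=\partial_{h_E}\bar\partial_E+\bar\partial_E\partial_{h_E}.
\]
We write $F_{h_E}=(F_{ij})$ and compute the four blocks using \eqref{eq 3.4} and Lemma \ref{lem 3.2}. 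On functions pulled back from $X$ or from $\Delta$, the contraction gives $\Lambda_{\Omega_\tau}(p^\ast\eta)=\frac1\tau p^\ast\Lambda_{\omega_X}\eta$ for a $2$-form $\eta$ on $X$, and $\Lambda_{\Omega_\tau}(q^\ast\omega_{\mathrm{Poi}})=1$.

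\textbf{Diagonal blocks.} The $(1,1)$-entry has three contributions: the curvatures of the two tensor factors, and the product of the upper-right entry $\sqrt{1/\tau}\,p^\ast\phi\otimes q^\ast\alpha$ of $\bar\partial_E$ with the lower-left entry $\sqrt{1/\tau}\,h_Lp^\ast\bar\phi\otimes q^\ast\beta$ of $\partial_{h_E}$. Using \eqref{eq 3.1} with $n=-1$ and \eqref{eq 3.2}, this should give
\[
F_{11}=\tfrac12p^\ast F_{h_L}-\tfrac{\sqrt{-1}}{2}q^\ast\omega_{\mathrm{Poi}}+\tfrac{\sqrt{-1}}{2\tau}p^\ast|\phi|^2_{h_L}\,q^\ast\omega_{\mathrm{Poi}} .
\]
Contracting, we expect
\[
\Lambda_{\Omega_\tau}F_{11}=-\frac{\sqrt{-1}}{2\tau}\,p^\ast\Big(\sqrt{-1}\Lambda_{\omega_X}F_{h_L}-|\phi|^2_{h_L}+\tau\Big),
\]
which vanishes by \eqref{eq 3.3}. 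Symmetrically, $F_{22}$ uses $n=1$ in \eqref{eq 3.1} and $\beta\wedge\alpha=-\alpha\wedge\beta$, which gives $F_{22}=-F_{11}$. So $\Lambda_{\Omega_\tau}F_{22}=0$ as well.

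\textbf{Off-diagonal blocks.} We have
\[
F_{12}=p^\ast\partial_{h_L}\big(\sqrt{1/\tau}\,\phi\big)\otimes q^\ast\alpha+\sqrt{1/\tau}\,p^\ast\phi\otimes q^\ast(\partial_{h^{(-2)}}\alpha),
\]
and $F_{21}$ is the analogous expression involving $\bar\partial\beta$. I will check directly that $\partial_{h^{(-2)}}\alpha=0$ and $\bar\partial_\Delta\beta=0$. For $\beta$ this is immediate, since $\beta=dz\otimes e_2$ is holomorphic. For $\alpha$, with $h^{(-2)}=(1-|z|^2)^2$, the connection form of $\partial_{h^{(-2)}}$ is $\partial\log(1-|z|^2)^2$, and this should cancel exactly the $\partial$-derivative of the coefficient $(1-|z|^2)^{-2}$. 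What remains in $F_{12}$ and $F_{21}$ are mixed forms, of type $(1,0)$ or $(0,1)$ on $X$ wedged with $(0,1)$ or $(1,0)$ on $\Delta$. Since $\Omega_\tau$ is a product form, such mixed forms are killed by $\Lambda_{\Omega_\tau}$.

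\textbf{Main obstacle.} The only real difficulty is bookkeeping of signs and constants. The sign of the $\alpha\wedge\beta$ term is flipped relative to the $\mathbb{P}^1$ case, because the signature is $(1,1)$ and Lemma \ref{lem 3.2} has a $+$ sign. The curvature of $h^{(-1)}$ on $\Delta$ also has the opposite sign to that of $h^{(-1)}$ on $\mathbb{P}^1$. In addition, the normalization factor $\tau$ in $\Omega_\tau$ (in place of $\tau/4\pi$) must match the factor $\tfrac{\sqrt{-1}}{2}$ in \eqref{eq 3.1} and \eqref{eq 3.2}. I would first verify the cancellation in $\Lambda_{\Omega_\tau}F_{11}$ carefully with these constants, since this is exactly where $C_0=C_1=-1$ must come out.
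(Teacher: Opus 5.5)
Your proposal is correct and follows the paper's proof essentially step for step. You reduce to $\Lambda_{\Omega_\tau}F_{h_E}=0$ via Lemma~\ref{lem 1.2} and write $F_{h_E}=\partial_{h_E}\bar\partial_E+\bar\partial_E\partial_{h_E}$. You compute $F_{11}$ from \eqref{eq 3.1}, \eqref{eq 3.2} and Lemma~\ref{lem 3.2}, obtaining $F_{22}=-F_{11}$ and $\Lambda_{\Omega_\tau}F_{11}=-\frac{\sqrt{-1}}{2\tau}p^\ast\big(\sqrt{-1}\Lambda_{\omega_X}F_{h_L}-|\phi|^2_{h_L}+\tau\big)$, and you kill the off-diagonal blocks using $\partial_{h^{(-2)}}\alpha=0$, $\bar\partial_\Delta\beta=0$ and the product structure of $\Omega_\tau$.
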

\bpr
We denote by $F_{h_E}$ the curvature of $\nabla_{h_E}$.
To show that $\nabla_{h_E}$ is an ASD connection, it is enough to show 
\[
\Lambda_{\Omega_\tau}F_{h_E}=0
\]
by Lemma \ref{lem 1.2}.
Since $E$ is rank 2, we write 
\[
F_{h_E}
=
\begin{pmatrix}
F_{11}&F_{12}\\
F_{21}&F_{22}
\end{pmatrix}.
\]
Recall that $F_{h_E}\in A^{1,1}(\mathrm{End}E)$ and therefore
\be\label{eq 3.7}
F_{h_E}=\partial_{h_E}\bar\partial_E+\bar\partial_E\partial_{h_E}.
\ee
To show $\Lambda_{\Omega_\tau}F_{h_E}=0$, we show $\Lambda_{\Omega_\tau}F_{ij}=0$ for $i,j\in\{1,2\}$.
We first show this for the diagonal part.
By \eqref{eq 3.1}, \eqref{eq 3.2}, \eqref{eq 3.4}, Lemma \ref{lem 3.2}, and \eqref{eq 3.7}, we have 
\begin{align*}
F_{11}&=p^\ast F_{h^{1/2}_L}+q^\ast F_{h^{(-1)}}+\frac1\tau\cdot p^\ast|\phi|^2_{h_L}\otimes q^\ast(\alpha\wedge\beta)\\
&=\frac12p^\ast F_{h_L}-\frac{\sqrt{-1}}{2}q^\ast\omega_{\mathrm{Poi}}+\frac{\sqrt{-1}}{2\tau}\cdot p^\ast |\phi|^2_{h_L}\otimes q^\ast\omega_{\mathrm{Poi}},\\
F_{22}&=p^\ast F_{h^{-1/2}_L}+q^\ast F_{h^{(1)}}+\frac1\tau\cdot p^\ast|\phi|^2_{h_L}\otimes q^\ast(\beta\wedge\alpha)\\
&=-\frac12p^\ast F_{h_L}+\frac{\sqrt{-1}}{2}q^\ast\omega_{\mathrm{Poi}}-\frac{\sqrt{-1}}{2\tau}\cdot p^\ast |\phi|^2_{h_L}\otimes q^\ast\omega_{\mathrm{Poi}}.
\end{align*}
Hence $F_{22}=-F_{11}$ and therefore $\Lambda_{\Omega_\tau}F_{11}=0$ implies $\Lambda_{\Omega_\tau}F_{22}=0$.
Since $\Omega_\tau$ has the form \eqref{eq 3.6}, we have
\[
\begin{split}
\Lambda_{\Omega_\tau}F_{11}&=\frac{1}{2\tau}p^\ast\Lambda_{\omega_X} F_{h_L}-\frac{\sqrt{-1}}{2}+\frac{\sqrt{-1}}{2\tau}\cdot p^\ast |\phi|^2_{h_L}\\
&=-\frac{\sqrt{-1}}{2\tau}\bigg(\sqrt{-1}p^\ast\Lambda_{\omega_X}F_{h_L}-p^\ast|\phi|^2_{h_L}+\tau\bigg)\\
&=0.
\end{split}
\]
The last equation follows from the assumption that $((L,\bar\partial_L),\phi,h_L)$ satisfies Manton's five $\tau$-vortex equation with $(C_0,C_1)=(-1,-1)$ (See \eqref{eq 3.3}).
We next show $\Lambda_{\Omega_\tau}F_{12}=\Lambda_{\Omega_\tau}F_{21}=0$.
By \eqref{eq 3.7}, we have 
\begin{align*}
F_{12}&=(p^\ast\partial_{h^{1/2}_L}\otimes\mathrm{Id}+\mathrm{Id}\otimes q^\ast \partial_{h^{(-1)}})\bigg(\sqrt{\frac1\tau}\cdot p^\ast\phi\otimes q^\ast\alpha\bigg)+\bigg(\sqrt{\frac1\tau}\cdot p^\ast\phi\otimes q^\ast\alpha\bigg)(p^\ast\partial_{h^{-1/2}_L}\otimes\mathrm{Id}+\mathrm{Id}\otimes q^\ast \partial_{h^{(1)}})\\
&=\sqrt{\frac1\tau}\cdot p^\ast(\partial_{h_L}\phi)\otimes q^\ast\alpha+\sqrt{\frac1\tau}\cdot p^\ast\phi\otimes q^\ast(\partial_{h^{(-2)}}\alpha).
\end{align*}
Then, by the definition of $\alpha$ and $h^{(-2)}$ we have
\[
\begin{split}
\partial_{h^{(-2)}}\alpha&=\partial\bigg(\frac{d\bar z}{(1-|z|^2)^2}\bigg)+(h^{(-2)})^{-1}\partial h^{(-2)}\wedge\frac{d\bar z}{(1-|z|^2)^2} \\
&=\frac{2\bar z}{(1-|z|^2)^3}dz\wedge d\bar z-\frac{2\bar z}{(1-|z|^2)^3}dz\wedge d\bar z\\
&=0.
\end{split}
\]
Hence 
\[
F_{12}=\sqrt{\frac1\tau}\cdot p^\ast(\partial_{h_L}\phi)\otimes q^\ast\alpha
\]
and therefore
\[
\Lambda_{\Omega_\tau}F_{12}=0
\]
since $F_{12}$ only contains differential forms which have mixed contributions of $X$ and $\Delta$.
By a similar calculation, we obtain
\[
F_{21}= p^\ast\bar\partial_L\bigg(\sqrt{\frac1\tau}\cdot p^\ast h_L\overline\phi\bigg)\otimes q^\ast\beta+\sqrt{\frac1\tau}\cdot p^\ast h_L\overline\phi\otimes q^\ast(\bar\partial_\Delta\beta).
\]
By the definition of $\beta$, we have
\[
\bar\partial_\Delta\beta=0.
\]
As a consequence
\[
\Lambda_{\Omega_\tau}F_{21}=0
\]
since $F_{21}$ only contains differential forms which have mixed contribution of $X$ and $\Delta$.
Hence
\[
\Lambda_{\Omega_\tau}F_{h_E}=0
\]
and therefore $\nabla_{h_E}$ is an ASD connection with respect to $\Omega_\tau$.
\epr
We summarize the results so far.
\begin{thm}\label{thm 3.1}
Let $X$ be a Riemann surface with a K\"ahler form $\omega_X$.
We fix a constant $\tau>0$.
Let $(L,\bar\partial_L)$ be a holomorphic line bundle over $X$ satisfying assumption \eqref{eq 2.3}.
Let $\phi$ be a non-trivial holomorphic section of $(L, \bar\partial_L)$.
\par
From $(L,\bar\partial_L)$ and $\phi$, we can construct an $SU(1,1)$-equivariant holomorphic bundle $(E,\bar\partial_E)$ on $X\times\Delta$ as
\begin{align*}
 E:&=p^\ast L^{1/2}\otimes q^\ast M_{-1}\oplus p^\ast L^{-1/2}\otimes q^\ast M_1,\\
\bar\partial_E:&=
\begin{pmatrix}
p^\ast\bar\partial_{L^{1/2}}\otimes \mathrm{Id}+\mathrm{Id}\otimes q^\ast\bar\partial_{\Delta}&\sqrt{\frac1\tau}\cdot p^\ast\phi\otimes q^\ast\alpha\\
0&p^\ast\bar\partial_{L^{-1/2}}\otimes \mathrm{Id}+\mathrm{Id}\otimes q^\ast\bar\partial_{\Delta}
\end{pmatrix}.
\end{align*}
Let $h_L$ be a Hermitian metric on $L$.
Suppose that the triple $((L,\bar\partial_L),\phi,h_L)$ satisfies Manton's five $\tau$-vortex equations with $(C_0,C_1)=(-1,-1)$.
Then, the unique connection whose $(0,1)$-part is $\bar\partial_E$ and compatible with the pseudo-Hermitian metric of signature $(1,1)$
\[
h_E:=\begin{pmatrix}
p^\ast h^{1/2}_L\otimes q^\ast h^{(-1)}&0\\
0&-p^\ast h^{-1/2}_L\otimes q^\ast h^{(1)}
\end{pmatrix}
\]
is an ASD connection with respect to the K\"ahler form $\Omega_\tau:=\bigg(\tau p^\ast\omega_X\bigg)\oplus q^\ast\omega_{\mathrm{Poi}}$.
\end{thm}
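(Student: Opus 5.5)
Theorem \ref{thm 3.1} is essentially a summary of the results of this section, so the plan is to assemble the three ingredients already established rather than prove anything new. First, the $SU(1,1)$-equivariance of $(E,\bar\partial_E)$ is the proposition stated just after \eqref{eq 3.4}. Its proof mirrors Proposition \ref{prop 2.2}. Each diagonal entry of \eqref{eq 3.4} is the tensor product of a pulled-back Dolbeault operator on $X$, on which $SU(1,1)$ acts trivially, with $\bar\partial_\Delta$ on $M_{\mp1}$, and the $SU(1,1)$-action on $M_n$ is holomorphic. The off-diagonal term $\sqrt{1/\tau}\,p^\ast\phi\otimes q^\ast\alpha$ is invariant because $\alpha$ generates $A^{0,1}(M_{-2})^{SU(1,1)}$ (Lemma \ref{lem 3.1}). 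Integrability $\bar\partial_E^2=0$ reduces to the vanishing of the mixed term $\bar\partial(p^\ast\phi\otimes q^\ast\alpha)$. This holds because $\phi$ is holomorphic and $\bar\partial_\Delta\alpha=0$, the latter being a $(0,2)$-form on a curve.

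Second, I would check that $h_E$ in \eqref{eq 3.5} is an $SU(1,1)$-invariant pseudo-Hermitian metric of signature $(1,1)$. Invariance follows from the $SU(1,1)$-invariance of $h^{(\pm1)}$, and the signature is read off from the diagonal signs. Proposition \ref{prop 1.1} then gives the unique connection $\nabla_{h_E}$ compatible with $h_E$ and with $(0,1)$-part $\bar\partial_E$. Its $(1,0)$-part is computed in Lemma \ref{lem 3.2}. The sign in front of $\sqrt{1/\tau}\,h_L\overline\phi\otimes\beta$ is the one spot where the indefinite metric enters, and I would double-check it: it comes from $-(-1)$, the metric sign times the adjoint sign.

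Third, the ASD property. By Lemma \ref{lem 1.2} it suffices to show $\Lambda_{\Omega_\tau}F_{h_E}=0$, which is the content of the preceding proposition. Writing $F_{h_E}=\partial_{h_E}\bar\partial_E+\bar\partial_E\partial_{h_E}$, the off-diagonal blocks are purely mixed $X\times\Delta$ forms. This uses $\partial_{h^{(-2)}}\alpha=0$ and $\bar\partial_\Delta\beta=0$, so these blocks are killed by $\Lambda_{\Omega_\tau}$. The diagonal blocks satisfy $F_{22}=-F_{11}$. Using \eqref{eq 3.1}, \eqref{eq 3.2} and the scaling $\tau p^\ast\omega_X$ in $\Omega_\tau$, the contraction $\Lambda_{\Omega_\tau}F_{11}$ equals $-\frac{\sqrt{-1}}{2\tau}$ times the left side of \eqref{eq 3.3}, hence vanishes. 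The main (only) delicate point is the bookkeeping of constants: the normalization $\omega_{\mathrm{Poi}}$ against $\alpha\wedge\beta=\frac{\sqrt{-1}}{2}\omega_{\mathrm{Poi}}$, the curvature $F_{h^{(-1)}}=-\frac{\sqrt{-1}}{2}\omega_{\mathrm{Poi}}$, and the factor $\tau$ in $\Omega_\tau$. Together these must make the $\tau$, $|\phi|^2$ and curvature terms line up with signs $(C_0,C_1)=(-1,-1)$. Combining the three steps gives the theorem.
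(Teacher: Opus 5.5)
Your proposal is correct and follows the paper's own route. You cover equivariance and integrability of $\bar\partial_E$, invariance of $h_E$, and the $(1,0)$-part from Lemma \ref{lem 3.2} with the sign $-\epsilon=+1$. You then obtain $\Lambda_{\Omega_\tau}F_{h_E}=0$ via Lemma \ref{lem 1.2}, since the off-diagonal blocks are purely mixed and $\Lambda_{\Omega_\tau}F_{11}=-\frac{\sqrt{-1}}{2\tau}\bigl(\sqrt{-1}\Lambda_{\omega_X}F_{h_L}-|\phi|^2_{h_L}+\tau\bigr)$, and your constants agree with \eqref{eq 3.1}, \eqref{eq 3.2} and \eqref{eq 3.3}.
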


\subsection{From ASD Connections to Vortex Equations}\label{sec 3.3}
We use the same notation as in the previous section.
\par
In this section, we construct a solution of Manton's five $\tau$-vortex equation with $(C_0,C_1)=(-1,-1)$ on $X$ from an ASD connection on $X\times \Delta$ satisfying certain conditions.
A large part of this section proceeds as in Section \ref{sec 2.3}:
 We first construct a holomorphic bundle on $X\times\Delta$ from holomorphic line bundles on $X$, then show that if that bundle is $SU(1,1)$-equivariant, admits an $SU(1,1)$-invariant pseudo-Hermitian metric of signature $(1,1)$, and the connection associated with the holomorphic structure and the metric is an ASD connection, then it gives rise to a solution of the Vortex equation.
\par
We fix a constant $\tau>0$.
Let $(L_1,\bar\partial_{L_1})$ be a holomorphic line bundle over $X$ with $\mathrm{deg}L_1>0$ and let $(L_2,\bar\partial_{L_2})$ be a holomorphic line bundle over $X$ with 
\[
\mathrm{deg}L_2=-\mathrm{deg}L_1.
\]
Let $(E,\bar\partial_E)$ be a holomorphic bundle on $X\times\Delta$ that is an extension of 
$(p^\ast L_2\otimes q^\ast M_{1},p^\ast\bar\partial_{L_2}\otimes\mathrm{Id}+\mathrm{Id}\otimes q^\ast\bar\partial_{\Delta})$
by $(p^\ast L_1\otimes q^\ast M_{-1}, p^\ast\bar\partial_{L_1}\otimes\mathrm{Id}+\mathrm{Id}\otimes q^\ast\bar\partial_{\Delta})$.
Thus, we have a short exact sequence of holomorphic vector bundles
\[
0\longrightarrow
p^\ast L_1\otimes q^\ast M_{-1}
\longrightarrow E
\longrightarrow
p^\ast L_2\otimes q^\ast M_1
\longrightarrow 0.
\]
Since this is also a short exact sequence of smooth bundles, and such a sequence splits,
\[
E\simeq p^\ast L_1\otimes q^\ast M_{-1}\oplus p^\ast L_2\otimes q^\ast M_1
\]
as smooth bundles.
From now on, we identify $E$ and $p^\ast L_1\otimes q^\ast M_{-1}\oplus p^\ast L_2\otimes q^\ast M_1$.
Let $\bar\partial_E$ be the Dolbeault operator of $E$.
Since $E$ is given by an extension, $\bar\partial_E$ can be written as
\[
\bar\partial_E=
\begin{pmatrix}
p^\ast\bar\partial_{L_1}\otimes\mathrm{Id}+\mathrm{Id}\otimes q^\ast\bar\partial_{\Delta}& \psi\\
0&p^\ast\bar\partial_{L_2}\otimes\mathrm{Id}+\mathrm{Id}\otimes q^\ast\bar\partial_{\Delta}
\end{pmatrix}
\]
where $\psi\in A^{0,1}(\mathrm{Hom}(p^\ast L_2\otimes q^\ast M_1, p^\ast L_1\otimes q^\ast M_{-1}))$.\par
The bundle $E$ is $SU(1,1)$-equivariant with respect to the $SU(1,1)$-action induced by that on $M_{-1}$ and $M_1$.
Moreover, if $(E,\bar\partial_E)$ is an $SU(1,1)$-equivariant holomorphic bundle, then $\psi$ has a specific form as in Proposition \ref{prop 2.5}. 
\begin{prop}\label{prop 3.3}
If $(E,\bar\partial_E)$ is an $SU(1,1)$-equivariant holomorphic bundle, then
\begin{itemize}
\item[(1)] there exists $\phi\in A(\mathrm{Hom}(L_2,L_1))$ such that 
\[
\psi=p^\ast\phi\otimes q^\ast\alpha.
\]
\item[(2)] $\phi$ is a holomorphic section of $(L_1\otimes L_2^{-1},\bar\partial_{L_1}\otimes \mathrm{Id}+\mathrm{Id}\otimes\bar\partial_{L^{-1}_2})$.
Here $(L^{-1}_2,\bar\partial_{L^{-1}_2})$ is the dual holomorphic line bundle of $(L_2,\bar\partial_{L_2})$.
\end{itemize}
See Section \ref{sec 3.1} for $\alpha$. 
\end{prop}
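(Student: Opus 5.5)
The plan is to follow the $SU(2)$ argument of Proposition \ref{prop 2.5}, replacing compact averaging with the homogeneous-space description of Section \ref{sec 1.5}.

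First, I will show that $SU(1,1)$-equivariance of $(E,\bar\partial_E)$ forces $\psi$ to be $SU(1,1)$-invariant. Every $g\in SU(1,1)$ acts on $E$ by biholomorphisms that preserve the split smooth decomposition $p^\ast L_1\otimes q^\ast M_{-1}\oplus p^\ast L_2\otimes q^\ast M_1$. The diagonal operators $p^\ast\bar\partial_{L_i}\otimes\mathrm{Id}+\mathrm{Id}\otimes q^\ast\bar\partial_\Delta$ are already invariant, since the actions on $M_{\pm1}$ are holomorphic and $SU(1,1)$ acts trivially on $X$. Hence the relation $g\circ\bar\partial_E=\bar\partial_E\circ g$ reduces to $g\cdot\psi=\psi$. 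Here $\psi$ is regarded as a section of $A^{0,1}(p^\ast(L_1\otimes L_2^{-1})\otimes q^\ast M_{-2})$, using $\mathrm{Hom}(M_1,M_{-1})\simeq M_{-2}$ from Lemma \ref{lem 1.4}.

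Second, I will decompose $\psi$ into components along $X$ and along $\Delta$, writing $\psi=\psi_X+\psi_\Delta$ with $\psi_X\in A^{0,1}_X\otimes(\cdot)$ and $\psi_\Delta\in A^{0,1}_\Delta\otimes(\cdot)$.

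For $\psi_X$, I fix $x\in X$ and a local frame of $L_1\otimes L_2^{-1}$ and a local coframe $d\bar w$ on $X$. Its coefficient is then an $SU(1,1)$-equivariant section of $M_{-2}$ over $\Delta$, and Lemma \ref{lem 1.5} gives that it vanishes.

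For $\psi_\Delta$, I fix $x$ and a local frame in the same way. The coefficient is then an invariant element of $A^{0,1}(M_{-2})$ on $\Delta$. By Lemma \ref{lem 3.1}(1) it equals $c(x)\alpha$. Setting $\phi(x):=c(x)$ gives a globally defined section: it is independent of the choice of frames because it is characterized intrinsically by $\psi_\Delta=p^\ast\phi\otimes q^\ast\alpha$, and it is smooth because $c(x)$ can be recovered by evaluating at $z=0$, for instance as the $d\bar z\otimes e_{-2}$ coefficient of $\psi$ at $(x,0)$. This proves (1).

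For (2), I will use integrability, $\bar\partial_E^2=0$. Its $(1,2)$ block is
\[
(p^\ast\bar\partial_{L_1}\otimes\mathrm{Id}+\mathrm{Id}\otimes q^\ast\bar\partial_\Delta)\psi+\psi(p^\ast\bar\partial_{L_2}\otimes\mathrm{Id}+\mathrm{Id}\otimes q^\ast\bar\partial_\Delta),
\]
and with $\psi=p^\ast\phi\otimes q^\ast\alpha$ this equals
\[
p^\ast(\bar\partial_{L_1\otimes L_2^{-1}}\phi)\wedge q^\ast\alpha+p^\ast\phi\otimes q^\ast(\bar\partial_\Delta\alpha).
\]
The second term vanishes automatically, because $\bar\partial_\Delta\alpha$ is a $(0,2)$-form on a Riemann surface. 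Since $\alpha$ is nowhere zero, the first term vanishing forces $\bar\partial\phi=0$, so $\phi$ is holomorphic.

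The main obstacle is the first step: justifying rigorously that equivariance of the holomorphic structure reduces to invariance of $\psi$. I will handle it as in \cite[Proposition 3.5]{Ono}, by noting that the action is diagonal on the split bundle, so it commutes with the smooth splitting and conjugation only affects the off-diagonal term.
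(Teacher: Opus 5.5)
Your proposal is correct and runs along the same lines as the argument the paper relies on. The paper gives no separate proof here and defers, via Proposition \ref{prop 2.5}, to \cite[Proposition 3.5]{Ono}. That argument has the same four steps as yours: equivariance makes $\psi$ invariant, Lemma \ref{lem 1.5} kills the $X$-directional component, Lemma \ref{lem 3.1} gives $\psi=p^\ast\phi\otimes q^\ast\alpha$, and the $(1,2)$-block of $\bar\partial_E^2=0$ gives $\bar\partial\phi=0$. No averaging is needed even in the $SU(2)$ case, since the action on the split bundle is fixed and $\psi$ is invariant directly.
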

Since multiplying a holomorphic section by a nonzero constant preserves holomorphicity, $\sqrt{\tau}\phi$ is also holomorphic.
Thus, after replacing $\sqrt{\tau}\phi$ by $\phi$, we may write
\[
\psi
=
\sqrt{\frac{1}{\tau}}\cdot p^\ast\phi\otimes q^\ast\alpha.
\]
We keep the normalization of $\alpha$ fixed.\par
We next prepare a structural result on $SU(1,1)$-invariant pseudo-Hermitian metrics of signature $(1,1)$ on $E$ as in Proposition \ref{prop 2.6}.
\begin{prop}\label{prop 3.4}
Let $h_E$ be an $SU(1,1)$-invariant pseudo-Hermitian metric of signature $(1,1)$ on $E$.
Then there exist Hermitian metrics $h_{L_1}$ of $L_1$ and $h_{L_2}$ of $L_2$ such that
\[
h_E
=
\begin{pmatrix}
\epsilon_0\cdot p^\ast h_{L_1}\otimes q^\ast h^{(-1)}&0\\
0& \epsilon_1\cdot p^\ast h_{L_2}\otimes q^\ast h^{(1)}
\end{pmatrix},\,\,(\epsilon_0,\epsilon_1)\in\{(1,-1), (-1,1)\}.
\]
\end{prop}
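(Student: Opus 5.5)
Write an arbitrary Hermitian form on $E=p^\ast L_1\otimes q^\ast M_{-1}\oplus p^\ast L_2\otimes q^\ast M_1$ as a $2\times 2$ block matrix
\[
h_E=\begin{pmatrix} h_{11}&h_{12}\\ h_{21}&h_{22}\end{pmatrix},\qquad h_{21}=\overline{h_{12}}^{\,t}.
\]
Each entry is a section of a line bundle of the form $p^\ast(\text{line bundle on }X)\otimes q^\ast(\text{equivariant line bundle on }\Delta)$. Concretely, $h_{11}\in A(p^\ast(L_1\otimes\overline L_1)^\vee\otimes q^\ast(M_{-1}\otimes\overline{M_{-1}})^\vee)$, and similarly for the other entries. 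Since $SU(1,1)$ acts trivially on $X$, $SU(1,1)$-invariance of $h_E$ means that for each fixed $x\in X$ the restriction to $\{x\}\times\Delta$ is an $SU(1,1)$-invariant section of a $SU(1,1)$-equivariant line bundle over $\Delta\simeq SU(1,1)/U(1)$, tensored with a fixed finite-dimensional vector space coming from the fibres of $L_i$ at $x$ (on which $SU(1,1)$ acts trivially). The plan is to compute the weights of these line bundles and apply Lemma \ref{lem 1.5}.

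\textbf{Weights and Lemma \ref{lem 1.5}.}
\begin{itemize}
\item[(a)] The conjugate bundle $\overline{M_n}$ has weight $-n$, and the dual flips the sign again. So $(M_{-1}\otimes\overline{M_{-1}})^\vee$ has weight $-(-1)-(+1)=0$. Likewise $(M_1\otimes\overline{M_1})^\vee$ has weight $0$.
\item[(b)] The off-diagonal entry $h_{12}$ lives in $(M_{-1}\otimes\overline{M_1})^\vee$, of weight $-(-1)-(-1)=2\neq0$. Its conjugate $h_{21}$ has weight $-2$.
\end{itemize}
By Lemma \ref{lem 1.5}(1), applied pointwise in $x$ after choosing a local frame of $L_i$, the invariant off-diagonal entries vanish, so $h_{12}=h_{21}=0$. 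By Lemma \ref{lem 1.5}(2), or more directly Lemma \ref{lem 1.6} applied to $M_{\mp1}$, the diagonal entries on each slice $\{x\}\times\Delta$ are real constant multiples of the invariant metrics $h^{(-1)}$ and $h^{(1)}$. Here the weight-$0$ equivariant bundle is $SU(1,1)$-equivariantly trivialized by $(h^{(\mp1)})^{-1}$. Hence
\[
h_{11}=p^\ast f_1\otimes q^\ast h^{(-1)},\qquad h_{22}=p^\ast f_2\otimes q^\ast h^{(1)},
\]
where $f_i$ is a smooth Hermitian form on $L_i$ (that is, a smooth real function times a fixed metric). Smoothness in $x$ is inherited from $h_E$, since $f_i$ is recovered by evaluating at $z=0$.

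\textbf{Signature.} Since $h_E$ is non-degenerate and block-diagonal, each $f_i$ is nowhere zero. Since $X$ is connected, $f_i$ has constant sign $\epsilon_{i-1}\in\{\pm1\}$, so $f_1=\epsilon_0 h_{L_1}$ and $f_2=\epsilon_1 h_{L_2}$ with $h_{L_i}$ genuine Hermitian metrics. Signature $(1,1)$ forces exactly one of $\epsilon_0,\epsilon_1$ to be negative, which gives $(\epsilon_0,\epsilon_1)\in\{(1,-1),(-1,1)\}$.

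\textbf{Main obstacle.} The step I expect to need the most care is the weight bookkeeping in (a) and (b), in particular for conjugate bundles. It requires checking that the $SU(1,1)$-action on $\overline{M_n}$ induced by complex conjugation has weight $-n$ at $0$, so that Lemma \ref{lem 1.5} genuinely applies. I would verify this directly from the formula $g\cdot(e_n)_z=(\bar bz+\bar a)^{-n}(e_n)_{g\cdot z}$, by evaluating at $k_\theta$. This is the same argument as in Proposition \ref{prop 2.6}, taken from \cite{GP2,Ono}.
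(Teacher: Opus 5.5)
Your proposal is correct and follows essentially the argument the paper relies on. The paper gives no separate proof and defers, via Proposition~\ref{prop 2.6}, to the earlier works of Garc\'ia-Prada and the author, where the same route is used: decompose $h_E$ into blocks, kill the off-diagonal blocks by Lemma~\ref{lem 1.5}(1) since they have weight $\pm2$, and identify the diagonal blocks slice-wise as $x$-dependent real multiples of $h^{(\mp1)}$ via Lemma~\ref{lem 1.5}(2) or Lemma~\ref{lem 1.6}, after which non-degeneracy, connectedness of $X$ and signature $(1,1)$ fix $(\epsilon_0,\epsilon_1)$.
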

We now construct a solution of Manton's five vortex equation with $(C_0,C_1)=(-1,-1)$ from an ASD connection.
Throughout this argument, we work with the K\"ahler form $\Omega_\tau$ constructed in \eqref{eq 3.6}.
\par
Suppose that $\bar\partial_E$ defines an $SU(1,1)$-equivariant holomorphic bundle structure on $E$.
Then by Proposition \ref{prop 3.3}, there exists a holomorphic section $\phi$ of $(L_1\otimes L_2^{-1},\bar\partial_{L_1}\otimes \mathrm{Id}+ \mathrm{Id}\otimes\bar\partial_{L^{-1}_2})$ such that 
\be\label{eq 3.8}
\bar\partial_E=
\begin{pmatrix}
p^\ast\bar\partial_{L_1}\otimes\mathrm{Id}+\mathrm{Id}\otimes q^\ast\bar\partial_{\Delta}& \sqrt{\frac{1}{\tau}}\cdot p^\ast\phi\otimes q^\ast\alpha\\
0&p^\ast\bar\partial_{L_2}\otimes\mathrm{Id}+\mathrm{Id}\otimes q^\ast\bar\partial_{\Delta}
\end{pmatrix}
\ee
Let $h_E$ be as in Proposition \ref{prop 3.4}, and let $\nabla_{h_E}$ be the unique connection compatible with $h_E$ and $\nabla^{0,1}_{h_E}=\bar\partial_E$ (See Proposition \ref{prop 1.1}). 
Then, by Lemma \ref{lem 3.2}, $\partial_{h_E}=\nabla^{1,0}_{h_E}$ has the form 
\be\label{eq 3.9}
\partial_{h_E}=
\begin{pmatrix}
p^\ast\partial_{h_{L_1}}\otimes\mathrm{Id}+\mathrm{Id}\otimes q^\ast\partial_{h^{(-1)}}&0\\
\sqrt{\frac{1}{\tau}} \cdot p^\ast (h_{L_1}h^{-1}_{L_2}\overline \phi)\otimes q^\ast \beta& p^\ast\partial_{h_{L_2}}\otimes\mathrm{Id}+\mathrm{Id}\otimes q^\ast\partial_{h^{(1)}}
\end{pmatrix}
\ee
We denote the curvature of $\nabla_{h_E}$ by $F_{h_E}$.
We denote by $h_{L_1}h^{-1}_{L_2}$ the Hermitian metric on $L_1\otimes L^{-1}_2$ induced by $h_{L_1}$ and $h_{L_2}$.
\begin{prop}\label{prop 3.5}
If $\nabla_{h_E}$ is an ASD connection with respect to $\Omega_\tau$, then the triple 
\[
\big((L_1\otimes L^{-1}_2, \bar\partial_{L_1}\otimes \mathrm{Id}+\mathrm{Id}\otimes \bar\partial_{L_2}),\phi,h_{L_1}h^{-1}_{L_2}\big)
\]
satisfies Manton's five $\tau$-vortex equation with $(C_0,C_1)=(-1,-1)$ on $(X,\omega_X)$.
\end{prop}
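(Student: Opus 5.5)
We follow the proof of Proposition \ref{prop 2.7}, with the $SU(1,1)$ data of Section \ref{sec 3.1} replacing the $SU(2)$ data. The plan is to compute the diagonal blocks of $F_{h_E}$, apply the ASD condition $\Lambda_{\Omega_\tau}F_{h_E}=0$ (Lemma \ref{lem 1.2}), and subtract the two resulting scalar equations.

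First we write $F_{h_E}=(F_{ij})$. Since $h_E$ is non-degenerate and $\nabla_{h_E}$ satisfies $\nabla^{0,1}_{h_E}=\bar\partial_E$, we have $\partial_{h_E}^2=0$, so
\[
F_{h_E}=\partial_{h_E}\bar\partial_E+\bar\partial_E\partial_{h_E}.
\]
We insert \eqref{eq 3.8} and \eqref{eq 3.9}. By Proposition \ref{prop 3.4} the off-diagonal coefficient of $\partial_{h_E}$ carries the sign $-\epsilon_0\epsilon_1$, and this equals $+1$ in both allowed cases, as Lemma \ref{lem 3.2} and \eqref{eq 3.9} indicate. This yields
\begin{align*}
F_{11}&=p^\ast F_{h_{L_1}}+q^\ast F_{h^{(-1)}}+\tfrac1\tau p^\ast|\phi|^2_{h_{L_1}h^{-1}_{L_2}}\otimes q^\ast(\alpha\wedge\beta),\\
F_{22}&=p^\ast F_{h_{L_2}}+q^\ast F_{h^{(1)}}+\tfrac1\tau p^\ast|\phi|^2_{h_{L_1}h^{-1}_{L_2}}\otimes q^\ast(\beta\wedge\alpha).
\end{align*}
By \eqref{eq 3.1} and \eqref{eq 3.2}, the $\Delta$-parts equal $\mp\frac{\sqrt{-1}}{2}\omega_{\mathrm{Poi}}$ and $\pm\frac{\sqrt{-1}}{2\tau}|\phi|^2\omega_{\mathrm{Poi}}$. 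We only need $\Lambda_{\Omega_\tau}F_{11}=\Lambda_{\Omega_\tau}F_{22}=0$, so the off-diagonal blocks do not have to be computed.

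Next we contract with $\Omega_\tau=\tau p^\ast\omega_X\oplus q^\ast\omega_{\mathrm{Poi}}$. This gives $\Lambda_{\Omega_\tau}p^\ast\eta=\frac1\tau p^\ast\Lambda_{\omega_X}\eta$ and $\Lambda_{\Omega_\tau}q^\ast\omega_{\mathrm{Poi}}=1$, so the two conditions become
\begin{align*}
0&=\tfrac1\tau\Lambda_{\omega_X}F_{h_{L_1}}-\tfrac{\sqrt{-1}}{2}+\tfrac{\sqrt{-1}}{2\tau}|\phi|^2,\\
0&=\tfrac1\tau\Lambda_{\omega_X}F_{h_{L_2}}+\tfrac{\sqrt{-1}}{2}-\tfrac{\sqrt{-1}}{2\tau}|\phi|^2.
\end{align*}
Subtracting the second from the first and using $F_{h_{L_1}h^{-1}_{L_2}}=F_{h_{L_1}}-F_{h_{L_2}}$, we obtain
\[
0=-\frac{\sqrt{-1}}{\tau}\Big(\sqrt{-1}\Lambda_{\omega_X}F_{h_{L_1}h^{-1}_{L_2}}-|\phi|^2_{h_{L_1}h^{-1}_{L_2}}+\tau\Big),
\]
which is the $(-1,-1)$ equation. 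Holomorphicity of $\phi$ comes from Proposition \ref{prop 3.3}.

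The main obstacle is the sign bookkeeping. One must check that for both $(\epsilon_0,\epsilon_1)=(1,-1)$ and $(-1,1)$ the off-diagonal term of $\partial_{h_E}$ has the same sign, so that the $|\phi|^2$ term enters with coefficient $-1$. This requires redoing the computation of Lemma \ref{lem 3.2}: $h_E$-compatibility forces the $(2,1)$-entry to equal $-h_{22}^{-1}\,\overline{\psi}^{\,t}h_{11}$, and the overall sign is $-\epsilon_0/\epsilon_1=1$. One must also track that the $\omega_{\mathrm{Poi}}$ terms coming from $M_{\mp1}$ and from $\alpha\wedge\beta$ carry opposite signs, which is what makes the constant $+\tau$ appear.
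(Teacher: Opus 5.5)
Your proposal is correct and follows the paper's proof essentially step for step. You compute $F_{11}$ and $F_{22}$ from \eqref{eq 3.8} and \eqref{eq 3.9} using \eqref{eq 3.1} and \eqref{eq 3.2}, impose $\Lambda_{\Omega_\tau}F_{11}=\Lambda_{\Omega_\tau}F_{22}=0$, and subtract; your check that the $(2,1)$-entry of $\partial_{h_E}$ carries sign $-\epsilon_0\epsilon_1=+1$ for both signatures allowed by Proposition \ref{prop 3.4} is correct, and it makes explicit a point the paper leaves implicit when it invokes Lemma \ref{lem 3.2}.
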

\begin{proof}
Since $E$ is a rank $2$ bundle, we can write $F_{h_E}$ as
\[
F_{h_E}=
\begin{pmatrix}
F_{11}&F_{12}\\
F_{21}&F_{22}
\end{pmatrix}.
\]
Since $\partial_{h_E}^2=0$, we have
\[
F_{h_E}=\partial_{h_E}\bar\partial_E+\bar\partial_E\partial_{h_E}.
\]
Since $\bar\partial_{E}$ and $\partial_{h_E}$ have the form given in \eqref{eq 3.8} and \eqref{eq 3.9}, we have 
\[
\begin{split}
F_{11}&=p^\ast F_{h_{L_1}}+q^\ast F_{h^{(-1)}}+\frac1\tau\cdot p^\ast|\phi|^2_{h_{L_1}h^{-1}_{L_2}}\otimes q^\ast (\alpha\wedge\beta)\\
&=p^\ast F_{h_{L_1}}-\frac{\sqrt{-1}}{2}q^\ast \omega_{\mathrm{Poi}}+\frac{\sqrt{-1}}{2\tau}\cdot p^\ast|\phi|^2_{h_{L_1}h^{-1}_{L_2}}\otimes q^\ast \omega_{\mathrm{Poi}}
\end{split}
\]
and 
\[
\begin{split}
F_{22}&=p^\ast F_{h_{L_2}}+q^\ast F_{h^{(1)}}+\frac1\tau p^\ast|\phi|^2_{h_{L_1}h^{-1}_{L_2}}\otimes q^\ast (\beta\wedge\alpha)\\
&=p^\ast F_{h_{L_2}}+\frac{\sqrt{-1}}{2}q^\ast \omega_{\mathrm{Poi}}-\frac{\sqrt{-1}}{2\tau}\cdot p^\ast|\phi|^2_{h_{L_1}h^{-1}_{L_2}}\otimes q^\ast \omega_{\mathrm{Poi}}.
\end{split}
\]
We note that we used the relations \eqref{eq 3.1} and \eqref{eq 3.2} in the equations above.
We now assume that $\nabla_{h_E}$ is an ASD connection with respect to the K\"ahler form $\Omega_\tau$.
Then, $\Lambda_{\Omega_\tau}F_{h_E}=0$ by Lemma \ref{lem 1.2}.
This implies $\Lambda_{\Omega_\tau}F_{11}=0$ and $\Lambda_{\Omega_\tau}F_{22}=0$.
Therefore, we have 
\begin{align*}
&0=\Lambda_{\Omega_\tau}F_{11}=\frac1\tau p^\ast \Lambda_{\omega_X}F_{h_{L_1}}-\frac{\sqrt{-1}}{2}+\frac{\sqrt{-1}}{2\tau}\cdot p^\ast|\phi|^2_{h_{L_1}h^{-1}_{L_2}},\\
&0=\Lambda_{\Omega_\tau}F_{22}=\frac1\tau p^\ast \Lambda_{\omega_X}F_{h_{L_2}}+\frac{\sqrt{-1}}{2}-\frac{\sqrt{-1}}{2\tau}\cdot p^\ast|\phi|^2_{h_{L_1}h^{-1}_{L_2}}.
\end{align*}
Subtracting the second equation from the first equation, we obtain
\be\label{eq 3.10}
\begin{split}
0&=\frac{1}{\tau}p^\ast (\Lambda_{\omega_X}F_{h_{L_1}}-\Lambda_{\omega_X}F_{h_{L_2}})+\frac{\sqrt{-1}}{\tau}\cdot p^\ast|\phi|^2_{h_{L_1}h^{-1}_{L_2}}-\sqrt{-1}\\
&=-\frac{\sqrt{-1}}{\tau}p^\ast\bigg(\sqrt{-1}\Lambda_{\omega_X}F_{h_{L_1}h^{-1}_{L_2}}-|\phi|^2_{h_{L_1}h^{-1}_{L_2}}+\tau\bigg).
\end{split}
\ee
Here $F_{h_{L_1}h^{-1}_{L_2}}$ is the curvature of the Chern connection associated with $(L_1\otimes L^{-1}_2,\bar\partial_{L_1}\otimes \mathrm{Id}+\mathrm{Id}\otimes\bar\partial_{L^{-1}_2})$ and $h_{L_1}h^{-1}_{L_2}$.
The equation \eqref{eq 3.10} implies 
\[
\sqrt{-1}\Lambda_{\omega_X}F_{h_{L_1}h^{-1}_{L_2}}-|\phi|^2_{h_{L_1}h^{-1}_{L_2}}+\tau=0.
\]
Hence the triple 
\[
\big((L_1\otimes L^{-1}_2, \bar\partial_{L_1}\otimes \mathrm{Id}+\mathrm{Id}\otimes \bar\partial_{L^{-1}_2}),\phi,h_{L_1}h^{-1}_{L_2}\big)
\]
satisfies Manton's five $\tau$-vortex equation with $(C_0,C_1)=(-1,-1)$ on $(X,\omega_X)$.
\end{proof}
We summarize the results.
\begin{thm}\label{thm 3.2}
Let $X$ be a Riemann surface with a K\"ahler form $\omega_X$. 
We fix a constant $\tau>0$.
Let $(L_1,\bar\partial_{L_1})$ be a holomorphic line bundle over $X$.
We assume 
\[
\mathrm{deg}L_1>0
\] if $X$ is compact.
Let $(L_2,\bar\partial_{L_2})$ be a holomorphic line bundle over $X$.
We assume 
\[
\mathrm{deg}L_2=-\mathrm{deg}L_1
\]
if $X$ is compact.
\par
Let $E$ be a holomorphic bundle over $X\times \Delta$ that is an extension of 
$(p^\ast L_2\otimes q^\ast M_{1},p^\ast\bar\partial_{L_2}\otimes\mathrm{Id}+\mathrm{Id}\otimes q^\ast\bar\partial_{\Delta})$
by $(p^\ast L_1\otimes q^\ast M_{-1}, p^\ast\bar\partial_{L_1}\otimes\mathrm{Id}+\mathrm{Id}\otimes q^\ast\bar\partial_{\Delta})$.
We denote by $\bar\partial_E$ the holomorphic structure of $E$.
\par
If we assume 
\begin{itemize}
\item[(1)] $(E,\bar\partial_E)$ is an $SU(1,1)$-equivariant holomorphic bundle.
\item[(2)] $E$ admits an $SU(1,1)$-invariant pseudo-Hermitian metric $h_E$ of signature $(1,1)$.
\item[(3)] The unique connection $\nabla_{h_E}$ satisfying is  $\nabla^{0,1}_{h_E}=\bar\partial_E$ and compatible with $h_E$ is an ASD connection with respect to $\Omega_\tau$.
\end{itemize}
Then, there exists a holomorphic section $\phi$ of $(L_1\otimes L_2^{-1},\bar\partial_{L_1}\otimes \mathrm{Id}+ \mathrm{Id}\otimes\bar\partial_{L^{-1}_2})$
and Hermitian metrics $h_{L_1}$ and $h_{L_2}$ of $L_1$ and $L_2$ such that the triple 
\[
\big((L_1\otimes L^{-1}_2, \bar\partial_{L_1}\otimes \mathrm{Id}+\mathrm{Id}\otimes \bar\partial_{L^{-1}_2}),\phi,h_{L_1}h^{-1}_{L_2}\big)
\]
satisfies Manton's five $\tau$-vortex equations with $(C_0,C_1)=(-1,-1)$ on $(X,\omega_X)$.
\end{thm}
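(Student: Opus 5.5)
The plan is to assemble Theorem \ref{thm 3.2} from the results of Section \ref{sec 3.3}, following the pattern of the proof of Theorem \ref{thm 2.2}. Write $\bar\partial_E$ in upper-triangular form with respect to the smooth splitting $E\simeq p^\ast L_1\otimes q^\ast M_{-1}\oplus p^\ast L_2\otimes q^\ast M_1$, with off-diagonal term $\psi$. This splitting exists because the short exact sequence splits smoothly.

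\textbf{Step 1: the Higgs field.} By assumption (1) and Proposition \ref{prop 3.3}, $\psi=p^\ast\phi'\otimes q^\ast\alpha$ for a holomorphic section $\phi'$ of $L_1\otimes L_2^{-1}$. Setting $\phi:=\sqrt{\tau}\,\phi'$ puts $\bar\partial_E$ in the normalized form \eqref{eq 3.8}. The key point is that $SU(1,1)$-invariance forces $\psi$ into the one-dimensional space $A^{0,1}(M_{-2})^{SU(1,1)}$ of Lemma \ref{lem 3.1}. Holomorphicity of $\phi$ then comes from $\bar\partial_E^2=0$, using $\bar\partial_\Delta\alpha=0$.

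\textbf{Step 2: the metric and the $(1,0)$-part.} By assumption (2) and Proposition \ref{prop 3.4}, $h_E$ is diagonal, with entries $\epsilon_0 p^\ast h_{L_1}\otimes q^\ast h^{(-1)}$ and $\epsilon_1 p^\ast h_{L_2}\otimes q^\ast h^{(1)}$, where $(\epsilon_0,\epsilon_1)=(1,-1)$ or $(-1,1)$. This gives the Hermitian metrics $h_{L_1},h_{L_2}$ required in the statement. Only the ratio $\epsilon_0\epsilon_1=-1$ enters the off-diagonal term of $\partial_{h_E}$: multiplying $h_E$ by $-1$ does not change the compatible connection. Hence in both sign cases $\partial_{h_E}$ has the form \eqref{eq 3.9}, by the computation of Lemma \ref{lem 3.2}. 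That computation rests on the $h_E$-compatibility identity $\partial_{h_E}=h_E^{-1}\partial h_E$ applied block-wise.

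\textbf{Step 3: ASD implies the vortex equation.} Assumption (3) says $\nabla_{h_E}$ is ASD with respect to $\Omega_\tau$, so Proposition \ref{prop 3.5} applies directly. Concretely, Lemma \ref{lem 1.2} gives $\Lambda_{\Omega_\tau}F_{11}=\Lambda_{\Omega_\tau}F_{22}=0$. Subtracting these and using \eqref{eq 3.1}, \eqref{eq 3.2} and $F_{h_{L_1}}-F_{h_{L_2}}=F_{h_{L_1}h_{L_2}^{-1}}$ gives
\[
\sqrt{-1}\Lambda_{\omega_X}F_{h_{L_1}h^{-1}_{L_2}}-|\phi|^2_{h_{L_1}h^{-1}_{L_2}}+\tau=0,
\]
which is the $(C_0,C_1)=(-1,-1)$ equation.

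\textbf{Main obstacle.} The only delicate point is Step 2: checking that both sign choices in Proposition \ref{prop 3.4} give the same sign in front of $|\phi|^2$. I would verify this first by computing $\partial_{h_E}$ directly in the frame $e_{\pm1}$. In that frame the lower-left entry is $-\epsilon_0\epsilon_1^{-1}\sqrt{1/\tau}\,h_{L_1}h_{L_2}^{-1}\bar\phi\otimes\beta=+\sqrt{1/\tau}\,h_{L_1}h_{L_2}^{-1}\bar\phi\otimes\beta$ in either case. The degree hypotheses are not used in the curvature computation; they only ensure that the setup is consistent with Section \ref{sec 3.2}.
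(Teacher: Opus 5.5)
Your proposal is correct and follows the paper's own route: Proposition~\ref{prop 3.3} with the $\sqrt{\tau}$ rescaling, then Proposition~\ref{prop 3.4} together with the form \eqref{eq 3.9} of $\partial_{h_E}$, then the subtraction of $\Lambda_{\Omega_\tau}F_{11}=0$ and $\Lambda_{\Omega_\tau}F_{22}=0$ exactly as in Proposition~\ref{prop 3.5}. You also point out that $-h_E$ defines the same connection, so both sign cases of Proposition~\ref{prop 3.4} give \eqref{eq 3.9}; the paper leaves this implicit, and your lower-left entry $-\epsilon_0\epsilon_1^{-1}\sqrt{1/\tau}\,h_{L_1}h_{L_2}^{-1}\bar\phi\otimes\beta$ is right, but note that in this non-holomorphic splitting it comes from the $h_E$-adjoint of $\psi$, not from $h_E^{-1}\partial h_E$, which only produces the diagonal blocks.
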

Let $((L_1\otimes L^{-1}_2, \bar\partial_{L_1}\otimes \mathrm{Id}+\mathrm{Id}\otimes \bar\partial_{L^{-1}_2}),\phi,h_{L_1}h^{-1}_{L_2}\big)$ be as in Theorem \ref{thm 3.2}.
We set
\begin{align*}
&N:=L_1\otimes L^{-1}_2,\\
&\bar\partial_N:=\bar\partial_{L_1}\otimes \mathrm{Id}+\mathrm{Id}\otimes \bar\partial_{L^{-1}_2},\\
&h_N:=h_{L_1}h^{-1}_{L_2}.
\end{align*}
Note that the line bundle $(N,\bar\partial_N)$ satisfies Assumption \eqref{eq 2.3}.
Hence, applying Theorem \ref{thm 3.1} to the triple $((N,\bar\partial_N),\phi,h_N)$, we obtain an $SU(1,1)$-equivariant holomorphic bundle $(F,\bar\partial_F)$ over $X\times \Delta$ and an $SU(1,1)$-invariant pseudo-Hermitian metric $h_F$ of signature $(1,1)$ of $F$ such that the unique connection $\nabla_{h_F}$ satisfying $\nabla_{h_F}^{0,1}=\bar\partial_{F}$ and compatible with $h_{F}$ is an ASD connection with respect to $\Omega_\tau$.
\par
We clarify the relation between $(E,\bar\partial_E,h_E)$ in Theorem \ref{thm 3.2} and $(F,\bar\partial_F,h_F)$.
By Theorem \ref{thm 3.1}, $(F,\bar\partial_F)$ and $h_F$ have the following forms
\begin{align*}
F&=p^\ast N^{1/2}\otimes q^\ast M_{-1}\oplus p^\ast N^{-1/2}\otimes q^\ast M_1,\\
\bar\partial_F&=
\begin{pmatrix}
p^\ast\bar\partial_{N^{1/2}}\otimes \mathrm{Id}+\mathrm{Id}\otimes q^\ast\bar\partial_{\Delta}&\sqrt{\frac1\tau}\cdot p^\ast\phi\otimes q^\ast\alpha\\
0&p^\ast\bar\partial_{N^{-1/2}}\otimes \mathrm{Id}+\mathrm{Id}\otimes q^\ast\bar\partial_{\Delta}
\end{pmatrix},\\
h_F&=\begin{pmatrix}
p^\ast h^{1/2}_N\otimes q^\ast h^{(-1)}&0\\
0&-p^\ast h^{-1/2}_N\otimes q^\ast h^{(1)}
\end{pmatrix}.
\end{align*}
We consider the holomorphic line bundle
\[
(P,\bar\partial_P):=(L_1\otimes N^{-1/2},\bar\partial_{L_1}\otimes \mathrm{Id}+\mathrm{Id}\otimes \bar\partial_{N^{-1/2}})
\]
over $X$ and a Hermitian metric 
\[
h_P:=h_{L_1}h_{N^{-1/2}}
\]
on it.
We denote by $(F,\bar\partial_F)\otimes (p^\ast P,p^\ast \bar\partial_P)$ the tensor product of holomorphic bundles.
Then, by the definition of $(P,\bar\partial_P)$ and $h_P$ we have
\begin{align*}
(E,\bar\partial_E)&\simeq (F,\bar\partial_F)\otimes (p^\ast P,p^\ast \bar\partial_P),\\
h_E&=h_F\otimes p^\ast h_P.
\end{align*}
By Proposition \ref{prop 3.4}, there are two possible choices for $(\epsilon_0,\epsilon_1)$ for $h_E$.
If $(\epsilon_0,\epsilon_1)=(1,-1)$, then
\[
h_E=h_F\otimes p^\ast h_P,
\]
whereas if $(\epsilon_0,\epsilon_1)=(-1,1)$, then
\[
h_E=h_F\otimes p^\ast(-h_P).
\]

\section{$(C_0,C_1)=(-1,0)$ Case}\label{sec 4}
\subsection{$SE(2)$-equivariant line bundle over $\mathbb{C}$}\label{sec 4.1}
In this section, we briefly recall the $SE(2)$-action on $\mathbb{C}$.
For further details, we refer the reader to \cite[Section 3.4.1]{Ono}.
The special Euclidean group $SE(2)$ is defined as the semi-direct product 
\[
SE(2):=\mathbb{C}\rtimes U(1).
\]
Let $(a,e^{i\theta})\in SE(2)$ and $z\in\mathbb{C}$.
$SE(2)$ acts on $\mathbb{C}$ as 
\[
(a,e^{i\theta})\cdot z:=a+e^{i\theta} z.
\]
The action is holomorphic and the stabilizer subgroup at $0\in \mathbb{C}$ is 
\[
\mathrm{Stab}_0=\{(0,e^{i\theta})\,\,|\,\, \theta\in\mathbb{R}\}\simeq U(1).
\]
Hence, we identify $\mathbb{C}$ with the homogeneous space $SE(2)/U(1)$.
We denote by $\underline{\mathbb{C}}_{\mathbb{C}}$ the trivial line bundle over $\mathbb{C}$.
The standard Dolbeault operator $\bar\partial_{\mathbb{C}}$ of $\mathbb{C}$ endows $\underline{\mathbb{C}}_{\mathbb{C}}$ with its standard holomorphic structure.
Let $e$ be the canonical holomorphic frame of $\underline{\mathbb{C}}_\mathbb{C}$, and denote by $e_z:=e(z)$ its value at $z\in\mathbb{C}$.
\par
Let $
g=(a,e^{i\theta})\in SE(2).$
For every $n\in\mathbb{Z}$, we define an $SE(2)$-action on $\underline{\mathbb{C}}_{\mathbb{C}}$ by
\[
g\cdot e_z:=e^{in\theta}e_{g\cdot z}.
\]
This action is holomorphic and therefore, $(\underline{\mathbb{C}}_{\mathbb{C}},\bar\partial_{\mathbb{C}})$ is an $SE(2)$-equivariant holomorphic line bundle with this action.
\par
For later use, we denote this $SE(2)$-equivariant holomorphic line bundle by $M_n$.
Then, by construction, $M_n$ is the weight $n$ $SE(2)$-equivariant holomorphic line bundle
over $\mathbb{C}\simeq SE(2)/U(1)$.
Its underlying holomorphic line bundle is still
\[
(\underline{\mathbb{C}}_{\mathbb{C}},\bar\partial_{\mathbb{C}}),
\]
and we continue to denote its Dolbeault operator by
$\bar\partial_{\mathbb{C}}$.
Although the underlying holomorphic frame is the same canonical frame $e$
of $\underline{\mathbb{C}}_{\mathbb{C}}$, we denote it by $e_n$ when it is
regarded as a frame of $M_n$, in order to indicate the weight.
Thus the $SE(2)$-action on $M_n$ is written as
\[
(a,e^{i\theta})\cdot (e_n)_z
=
e^{in\theta}(e_n)_{a+e^{i\theta}z}.
\]
Since the $SE(2)$-action on $\mathbb{C}$ is holomorphic, it induces a natural
$SE(2)$-action on $T^{1,0}\mathbb{C}$ by push-forward.
For $v_z\in T^{1,0}_z\mathbb{C}$, we define
\[
g\cdot v_z:=(dg)_z(v_z)\in T^{1,0}_{g\cdot z}\mathbb{C}.
\]
Let $g=(a,e^{i\theta})\in SE(2).$
Since
$
g\cdot z=a+e^{i\theta}z,
$
we have
\[
g\cdot\left(\frac{\partial}{\partial z}\bigg|_z\right)
=
e^{i\theta}\frac{\partial}{\partial z}\bigg|_{g\cdot z}.
\]
Since $T^{1,0*}\mathbb{C}$ is the dual bundle of $T^{1,0}\mathbb{C}$,
the action on $T^{1,0}\mathbb{C}$ naturally induces an $SE(2)$-action on
$T^{1,0*}\mathbb{C}$.
With respect to the frame $dz$, this induced action is given by
\[
g\cdot (dz)_z
=
e^{-i\theta}(dz)_{g\cdot z}.
\]
It follows from this formula that $T^{1,0*}\mathbb{C}$ is the weight $-1$ $SE(2)$-equivariant holomorphic line bundle over $\mathbb{C}$.
Therefore
\[
T^{1,0*}\mathbb{C}\simeq M_{-1}
\]
as $SE(2)$-equivariant holomorphic line bundles.
The complex conjugate of the above action defines a smooth $SE(2)$-action on
$T^{0,1*}\mathbb{C}$, which is given by
\[
g\cdot (d\overline z)_z
=
e^{i\theta}(d\overline z)_{g\cdot z}.
\]
Thus $T^{0,1*}\mathbb{C}$ is, by construction, the weight $1$
smooth $SE(2)$-equivariant complex line bundle over $\mathbb{C}$.
Therefore
\[
T^{0,1*}\mathbb{C}\simeq M_1
\]
as smooth $SE(2)$-equivariant complex line bundles.
Then, by Lemma \ref{lem 1.4}, the following statement holds.
\begin{lem}
\begin{itemize}
\item[(1)]
The space $A^{0,1}(M_{-1})^{SE(2)}$ is one-dimensional and is generated by
\[
\alpha=d\overline z\otimes e_{-1}.
\]
\item[(2)]  The space $A^{1,0}(M_{1})^{SE(2)}$ is one-dimensional and is generated by
\[
\beta=dz\otimes e_1.
\]
\end{itemize}
\end{lem}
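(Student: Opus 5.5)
The lemma is the $SE(2)$-analogue of Lemma \ref{lem 3.1}, and I will prove it the same way: compute the weights of the relevant bundles and apply Lemma \ref{lem 1.5}.

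\emph{Reduction to invariant sections.} Since $\mathbb{C}$ is one-dimensional, an element of $A^{0,1}(M_{-1})$ is a section of $T^{0,1*}\mathbb{C}\otimes M_{-1}$. We showed above that $T^{0,1*}\mathbb{C}\simeq M_1$ as smooth $SE(2)$-equivariant line bundles. By Lemma \ref{lem 1.4}, $T^{0,1*}\mathbb{C}\otimes M_{-1}$ therefore has weight $1+(-1)=0$. Invariant forms correspond exactly to $SE(2)$-equivariant sections of this tensor product bundle. Lemma \ref{lem 1.5}(2) then says that $A^{0,1}(M_{-1})^{SE(2)}$ is isomorphic to $\mathbb{C}$, hence one-dimensional. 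The same argument applies to $A^{1,0}(M_1)^{SE(2)}$: it is the space of equivariant sections of $T^{1,0*}\mathbb{C}\otimes M_1$, which has weight $-1+1=0$, so it is also one-dimensional.

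\emph{Identifying generators.} It remains to show that $\alpha$ and $\beta$ are nonzero invariant elements. Both are nowhere vanishing, so I only need to check invariance. Take $g=(a,e^{i\theta})$. From the formulas above,
\[
g\cdot(d\bar z)_z=e^{i\theta}(d\bar z)_{g\cdot z}\quad\text{and}\quad g\cdot(e_{-1})_z=e^{-i\theta}(e_{-1})_{g\cdot z}.
\]
Hence
\[
g\cdot \alpha(z)=e^{i\theta}e^{-i\theta}(d\bar z\otimes e_{-1})_{g\cdot z}=\alpha(g\cdot z).
\]
Similarly,
\[
g\cdot(dz)_z=e^{-i\theta}(dz)_{g\cdot z}\quad\text{and}\quad g\cdot(e_1)_z=e^{i\theta}(e_1)_{g\cdot z},
\]
so $g\cdot\beta(z)=\beta(g\cdot z)$. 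Since each space is one-dimensional, the nonzero invariant elements $\alpha$ and $\beta$ generate them.

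\emph{Main obstacle.} The only delicate point is the bookkeeping of the action on forms. One must use that the action on $T^{0,1*}\mathbb{C}$ is the one induced by the holomorphic action, with weight $+1$ rather than $-1$. With the wrong sign the tensor product would not have weight $0$, and Lemma \ref{lem 1.5}(1) would instead force the space of invariant forms to be zero. Once that sign is fixed as in the formulas above, both invariance checks are immediate cancellations of phases.
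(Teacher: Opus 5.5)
Your proof is correct and follows the paper's route. The paper likewise computes $T^{0,1*}\mathbb{C}\simeq M_1$ and $T^{1,0*}\mathbb{C}\simeq M_{-1}$, so that the relevant tensor products have weight $0$, and then appeals to Lemma \ref{lem 1.4} (with Lemma \ref{lem 1.5} implicit); your explicit check that $\alpha$ and $\beta$ are nowhere zero and invariant supplies the identification of the generators, which the paper leaves unstated in this section.
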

We next define an $SE(2)$-invariant Hermitian metric on $M_n$.
For every $n\in\mathbb{Z}$, we define an $SE(2)$-invariant Hermitian metric $h^{(n)}$ on $M_n$ as
\[
h^{(n)}_z(e_{n,z},e_{n,z}):=1.
\]
We denote by $\partial_{\mathbb{C}}$ the usual $\partial$ on $\mathbb{C}$.
 It is clear that the (1,0)-part of the Chern connection associated with $(\bar\partial_{\mathbb{C}},h^{(n)})$ is flat and independent of $n$ and is given by $\partial_{\mathbb{C}}.$
\par
We next fix our notation for the canonical Riemannian metric and the associated K\"ahler form on $\mathbb{C}.$
Writing $z=x+\sqrt{-1}y,$ we denote by
\[
g_{\mathrm{can}}:=dx\otimes dx+dy\otimes dy
\]
the canonical Euclidean metric on $\mathbb{C}$. 
Its associated K\"ahler form is
\[
\omega_{\mathrm{can}}:=\frac{\sqrt{-1}}{2}dz\wedge d\bar z.
\]
Then, by the definition of $\alpha$ and $\beta$, we have 
\be\label{eq 4.1}
\alpha\wedge\beta=2\sqrt{-1}\omega_{\mathrm{can}}.
\ee

\subsection{From Vortex Equations to ASD Connections}\label{sec 4.2}
Let $X$ be a Riemann surface with the K\"ahler form $\omega_X$.
We fix this notation throughout this section.
We note that we do not assume $X$ to be compact.\par
Let $(L,\bar\partial_L)$ be a holomorphic line bundle over $X$. 
We always assume that $(L,\bar\partial_L)$ satisfies the assumption \ref{eq 2.3}.
Let $h_L$ be a Hermitian metric of $L$ and $\phi$ be a holomorphic section of $(L,\bar\partial_L)$.
Suppose that the triple  $((L,\bar\partial_L),\phi,h_L)$ satisfies Manton's five vortex equation with $(C_0,C_1)=(-1,0)$, which is equivalent to 
\be\label{eq 4.2}
\sqrt{-1}\Lambda_{\omega_X}F_{h_L}-|\phi|^2_{h_L}=0.
\ee
We construct an ASD connection on $X\times\mathbb{C}$ from the triple.
A large part of the result is analogous to Sections \ref{sec 2.2} and \ref{sec 3.2}.
Let
\begin{align*}
&p:X\times\mathbb{C}\to X\\
&q:X\times \mathbb{C}\to \mathbb{C}
\end{align*}
 be the natural projections.
 We consider the action of $SE(2)$ on $X\times \mathbb{C}$ which is trivial on $X$ and standard on $\mathbb{C}$.
 Recall that $(M_n,\bar\partial_\mathbb{C})$ is the weight $n$ holomorphic line bundle over $\mathbb{C}$ (See Section \ref{sec 4.1}).
 We define a bundle by
 \begin{align*}
 E:&=p^\ast L^{1/2}\otimes q^\ast M_{-1}\oplus p^\ast L^{-1/2}.
 \end{align*}
 Here $L^{-1/2}$ is the dual of $L^{1/2}$.
 We denote its holomorphic structure by $\bar\partial_{L^{-1/2}}$.
 The $SE(2)$-action on $M_{-1}$ induces an $SE(2)$-action on $E$, where $SE(2)$ acts trivially on the factors pulled back from $X$.
By the definition of the action of $SE(2)$ on $X\times \mathbb{C}$, $E$ is also an $SE(2)$-equivariant bundle over $X\times \mathbb{C}$.
\par
 We define an operator on $E$ as 
\be\label{eq 4.3}
\bar\partial_E:=
\begin{pmatrix}
p^\ast\bar\partial_{L^{1/2}}\otimes \mathrm{Id}+\mathrm{Id}\otimes q^\ast\bar\partial_{\mathbb{C}}&\frac12\cdot p^\ast\phi\otimes q^\ast\alpha\\
0&p^\ast\bar\partial_{L^{-1/2}}
\end{pmatrix}.
\ee
Here $\alpha$ is the differential form defined in Section \ref{sec 4.1}.
Note that 
\[
p^\ast\phi\otimes q^\ast\alpha\in A^{0,1}\bigg(\mathrm{Hom}(p^\ast L^{-1/2}, p^\ast L^{1/2}\otimes q^\ast M_{-1})\bigg)=A^{0,1}(p^\ast L\otimes q^\ast M_{-1}).
\]
As in Proposition \ref{prop 2.2}, we have
\begin{prop}
$(E,\bar\partial_E)$ is an $SE(2)$-equivariant holomorphic bundle over $X\times \mathbb{C}$.
\end{prop}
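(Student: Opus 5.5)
We must show that $\bar\partial_E$ in \eqref{eq 4.3} is integrable, i.e. $\bar\partial_E^2=0$, and that the $SE(2)$-action on $E$ acts by holomorphic maps. The argument follows the one for Proposition \ref{prop 2.2} (that is, \cite[Proposition 3.3]{Ono}).

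\emph{Integrability.} Write $\bar\partial_E=\begin{pmatrix} D_1&\psi\\0&D_2\end{pmatrix}$ with $\psi=\frac12 p^\ast\phi\otimes q^\ast\alpha$. Here
\[
D_1=p^\ast\bar\partial_{L^{1/2}}\otimes\mathrm{Id}+\mathrm{Id}\otimes q^\ast\bar\partial_{\mathbb{C}},\qquad D_2=p^\ast\bar\partial_{L^{-1/2}} .
\]
Both diagonal operators are tensor products of pulled-back holomorphic structures, so $D_1^2=D_2^2=0$. The only remaining component of $\bar\partial_E^2$ is the off-diagonal entry $D_1\psi+\psi D_2$. This equals the induced operator on $\mathrm{Hom}(p^\ast L^{-1/2},p^\ast L^{1/2}\otimes q^\ast M_{-1})\cong p^\ast L\otimes q^\ast M_{-1}$ applied to $\psi$, namely
\[
\tfrac12\,p^\ast(\bar\partial_L\phi)\otimes q^\ast\alpha\;-\;\tfrac12\,p^\ast\phi\otimes q^\ast(\bar\partial_{\mathbb{C}}\alpha),
\]
with the sign coming from moving a $1$-form past $\phi$.
\begin{itemize}
\item The first term vanishes because $\phi$ is holomorphic.
\item The second term vanishes because $\alpha=d\bar z\otimes e_{-1}$ has constant coefficient in a holomorphic frame, so $\bar\partial_{\mathbb{C}}\alpha=0$. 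Alternatively, $\bar\partial\alpha$ is a $(0,2)$-form on a curve and hence zero.
\end{itemize}
Therefore $\bar\partial_E^2=0$, and by Newlander--Nirenberg (Koszul--Malgrange) $\bar\partial_E$ defines a holomorphic structure.

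\emph{Equivariance.} Each $g=(a,e^{i\theta})\in SE(2)$ acts on $E$ by a smooth bundle map covering the holomorphic map $\mathrm{id}\times g$. This map is diagonal: it is the $M_{-1}$-action on the first summand and the identity on the second. To show it is holomorphic, it suffices to check that it commutes with $\bar\partial_E$, i.e. $g^\ast\bar\partial_E=\bar\partial_E$.
\begin{itemize}
\item For the diagonal entries this holds because $M_{-1}$ is an $SE(2)$-equivariant holomorphic line bundle and the action is trivial on the $X$-factors.
\item For the off-diagonal entry it reduces to $g^\ast\alpha=\alpha$ as an $M_{-1}$-valued form. This is the $SE(2)$-invariance of $\alpha$, i.e. $\alpha\in A^{0,1}(M_{-1})^{SE(2)}$ from the lemma above. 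One can also check it directly: $d\bar z\mapsto e^{i\theta}d\bar z$ and $e_{-1}\mapsto e^{-i\theta}e_{-1}$, so the two factors cancel.
\end{itemize}
$\phi$ is untouched because $SE(2)$ acts trivially on $X$.

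The only step needing care is this last one: getting the direction of the pullback and the weights to match, so that the phases $e^{\pm i\theta}$ cancel rather than double. I would verify it directly from the formulas in Section \ref{sec 4.1}.
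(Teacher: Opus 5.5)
Your proposal is correct and follows essentially the same route as the paper, which gives no separate proof and defers, via Proposition \ref{prop 2.2}, to the direct check of \cite[Proposition 3.3]{Ono}: integrability from $\bar\partial_L\phi=0$ and $\bar\partial_{\mathbb{C}}\alpha=0$, and holomorphicity of the action from the $SE(2)$-invariance of $\alpha$, where your phase cancellation $e^{i\theta}e^{-i\theta}=1$ is right. The only slip is the minus sign in front of $p^\ast\phi\otimes q^\ast(\bar\partial_{\mathbb{C}}\alpha)$: since $\phi$ is a $0$-form, the Leibniz rule gives a plus sign there, which is harmless because both terms vanish.
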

We next define a pseudo-Hermitian metric of signature $(1,1)$ on $E$ as 
\be\label{eq 4.4}
h_E:=
\begin{pmatrix}
p^\ast h^{1/2}_L\otimes q^\ast h^{(-1)}&0\\
0&-p^\ast h^{-1/2}_L
\end{pmatrix}.
\ee
Here, $h^{1/2}_L$ and $h^{-1/2}_L$ are the Hermitian metrics on $L^{1/2}$ and $L^{-1/2}$ induced by $h_L$, and $h^{(-1)}$ is the $SE(2)$-invariant Hermitian metric on $M_{-1}$.
By the definition of the $SE(2)$-action on $E$, $h_E$ is an $SE(2)$-invariant pseudo-Hermitian metric.
\par
Let $\nabla_{h_E}$ be the unique connection associated with $\bar\partial_E$ and $h_E$ (See Proposition \ref{prop 1.1}).
We denote its $(1,0)$-part by $\partial_{h_E}$.
Then, as in Lemma \ref{lem 2.2}, $\partial_{h_E}$ can be written explicitly as follows:
\begin{lem}\label{lem 4.2}
\[
\partial_{h_E}=
\begin{pmatrix}
p^\ast\partial_{h^{1/2}_L}\otimes\mathrm{Id}+\mathrm{Id}\otimes q^\ast \partial_{h^{(-1)}}&0\\
 \frac12\cdot h_Lp^\ast\overline\phi\otimes q^\ast \beta& p^\ast\partial_{h^{-1/2}_L}
 \end{pmatrix}.
\]
Here $\beta$ is the differential form in Section \ref{sec 4.1}.
\end{lem}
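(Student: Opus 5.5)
The formula for $\partial_{h_E}$ is pinned down by the uniqueness in Proposition \ref{prop 1.1}. The plan is to write down the candidate connection $D:=\bar\partial_E+\partial'$, where $\partial'$ is the operator displayed in Lemma \ref{lem 4.2}, and check that it is compatible with $h_E$. Uniqueness then forces $D=\nabla_{h_E}$ and $\partial'=\partial_{h_E}$. This is the same argument as Lemma \ref{lem 2.2} (and \cite[Lemma 3.5]{Ono}), adapted to the $SE(2)$ data of Section \ref{sec 4.1}.

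Compatibility means
\[
\partial\, h_E(s,t)=h_E(\partial' s,t)+h_E(s,\bar\partial_E t)
\]
for all smooth sections $s,t$; the conjugate identity then follows by conjugation. I would split $s=(s_1,s_2)$ and $t=(t_1,t_2)$ according to the decomposition of $E$.

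For the diagonal terms, the diagonal entries of $\partial'$ are the $(1,0)$-parts of the Chern connections of $p^\ast h^{1/2}_L\otimes q^\ast h^{(-1)}$ and of $p^\ast h^{-1/2}_L$. Multiplying a Hermitian metric by $-1$ does not change its Chern connection. So these entries reproduce $\partial h_E(s_i,t_i)$, with the sign $-1$ on the second block being harmless. Here I use that $h^{(-1)}\equiv 1$, so $\partial_{h^{(-1)}}=\partial_{\mathbb{C}}$.

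The real content is the cross terms. The upper-right entry of $\bar\partial_E$ contributes $h_E\big(s_1,\tfrac12 p^\ast\phi\otimes q^\ast\alpha\, t_2\big)$. This has to be cancelled by the lower-left entry of $\partial'$, which contributes $-p^\ast h^{-1/2}_L\big(c\, s_1,t_2\big)$, where $c$ denotes that entry.

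Requiring the sum to vanish means $c$ must be the $h_E$-adjoint of $\tfrac12\phi\otimes\alpha$ with the sign coming from $\epsilon=-1$. Pointwise, the adjoint of $\phi:L^{-1/2}\to L^{1/2}$ is $h_L\bar\phi$. The minus sign from $-p^\ast h^{-1/2}_L$ turns the factor $-\epsilon$ of Lemma \ref{lem 2.2} into $+1$, matching the $+\tfrac12$ in the statement.

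The main obstacle is the form part of this adjoint. I need the $h^{(-1)}$-adjoint of $\alpha=d\bar z\otimes e_{-1}$, viewed as a $(1,0)$-form valued in $M_1=\mathrm{Hom}(M_{-1},\mathbb{C})$, to be exactly $\beta=dz\otimes e_1$ with no extra constant. With the metric $h^{(-1)}\equiv1$ this is immediate: $\overline{d\bar z}=dz$ and $e_{-1}^\ast=e_1$. I would check this carefully, together with the wedge-ordering sign conventions in $h_E(\cdot,\psi\,\cdot)$ for a $(0,1)$-form $\psi$. Consistency with \eqref{eq 4.1} and with the $SU(2)$ case (where the adjoint of $\alpha$ was $\beta$ via $h^{(-2)}$) serves as a check.

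Finally, $\partial'$ has the correct Leibniz property and $D^{0,1}=\bar\partial_E$ by construction. Proposition \ref{prop 1.1} then gives $\partial_{h_E}=\partial'$.
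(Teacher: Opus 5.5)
Your proposal is correct and follows the route the paper intends. The paper gives Lemma \ref{lem 4.2} no separate proof and derives it ``as in Lemma \ref{lem 2.2}'', that is, by the same standard computation for an extension with a block-diagonal (pseudo-)Hermitian metric: check that the candidate $(1,0)$-operator together with $\bar\partial_E$ is compatible with $h_E$, then invoke the uniqueness in Proposition \ref{prop 1.1}. Your cross-term analysis (the terms pairing $s_1$ with $t_2$) correctly yields the sign $-\epsilon=+1$. Since $h^{(-1)}\equiv 1$ and $e_{-1}^\ast=e_1$, the adjoint of $\alpha=d\bar z\otimes e_{-1}$ is exactly $\beta=dz\otimes e_1$, so the lower-left entry is $\tfrac12\, h_L p^\ast\overline\phi\otimes q^\ast\beta$ as stated.
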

This enables us to calculate the curvature of $\nabla_{h_E}$.
We will show that $\nabla_{h_E}$ is an ASD connection with respect to a suitable K\"ahler form on $X\times\mathbb{C}$.
\par
We fix a K\"ahler form on $X\times \mathbb{C}$ as
\be\label{eq 4.5}
\Omega:= p^\ast\omega_X\oplus q^\ast\omega_{\mathrm{can}}.
\ee
\begin{prop}
$\nabla_{h_E}$ is an ASD connection with respect to $\Omega$.
\end{prop}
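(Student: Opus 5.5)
By Lemma \ref{lem 1.2} it suffices to show $\Lambda_\Omega F_{h_E}=0$, and I would follow the computation for Theorems \ref{thm 2.1} and \ref{thm 3.1}. Since $\partial_{h_E}^2=0$ and $\bar\partial_E^2=0$, we have $F_{h_E}=\partial_{h_E}\bar\partial_E+\bar\partial_E\partial_{h_E}$. Write $F_{h_E}=(F_{ij})$ with respect to the splitting $E=p^\ast L^{1/2}\otimes q^\ast M_{-1}\oplus p^\ast L^{-1/2}$. I would then compute each block from \eqref{eq 4.3} and Lemma \ref{lem 4.2}.

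\textbf{Diagonal blocks.} The factor $M_{-1}$ contributes no curvature, because $h^{(-1)}\equiv 1$ and its Chern connection is the flat $\partial_{\mathbb{C}}+\bar\partial_{\mathbb{C}}$. So the only $\mathbb{C}$-direction contribution to $F_{11}$ is the product of the two off-diagonal terms,
\[
\Big(\tfrac12 p^\ast\phi\otimes q^\ast\alpha\Big)\wedge\Big(\tfrac12 p^\ast h_L\overline\phi\otimes q^\ast\beta\Big)=\tfrac14 p^\ast|\phi|^2_{h_L}\otimes q^\ast(\alpha\wedge\beta).
\]
Using \eqref{eq 4.1}, this should give
\[
F_{11}=\tfrac12 p^\ast F_{h_L}+\tfrac{\sqrt{-1}}{2}p^\ast|\phi|^2_{h_L}\otimes q^\ast\omega_{\mathrm{can}} .
\]
Symmetrically, $F_{22}=-\tfrac12 p^\ast F_{h_L}+\tfrac14 p^\ast|\phi|^2_{h_L}\otimes q^\ast(\beta\wedge\alpha)=-F_{11}$. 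Contracting with $\Omega=p^\ast\omega_X\oplus q^\ast\omega_{\mathrm{can}}$ then gives
\[
\Lambda_\Omega F_{11}=\tfrac12 p^\ast\Lambda_{\omega_X}F_{h_L}+\tfrac{\sqrt{-1}}{2}p^\ast|\phi|^2_{h_L}=-\tfrac{\sqrt{-1}}{2}\,p^\ast\big(\sqrt{-1}\Lambda_{\omega_X}F_{h_L}-|\phi|^2_{h_L}\big),
\]
which vanishes by \eqref{eq 4.2}. This explains both the factor $\tfrac12$ in \eqref{eq 4.3} and the absence of $\tau$: the coefficient $\tfrac14\cdot 2$ of $\omega_{\mathrm{can}}$ matches the factor $\tfrac12$ in front of $F_{h_L}$, and there is no constant term because $M_{-1}$ is flat.

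\textbf{Off-diagonal blocks.} We have
\[
F_{12}=\tfrac12 p^\ast(\partial_{h_L}\phi)\otimes q^\ast\alpha+\tfrac12 p^\ast\phi\otimes q^\ast(\partial_{h^{(-1)}}\alpha).
\]
The second term vanishes because $\partial_{h^{(-1)}}=\partial_{\mathbb{C}}$ and $\alpha=d\bar z\otimes e_{-1}$ has constant coefficient. Similarly,
\[
F_{21}=\tfrac12 p^\ast\bar\partial(h_L\overline\phi)\otimes q^\ast\beta,
\]
since $\bar\partial_{\mathbb{C}}\beta=0$. Both blocks consist only of mixed forms, with one leg on $X$ and one on $\mathbb{C}$, so $\Lambda_\Omega$ kills them.

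The main obstacle is bookkeeping rather than any conceptual step. One must check the signs in $\alpha\wedge\beta$ versus $\beta\wedge\alpha$ and the sign of the lower-left entry of $\partial_{h_E}$ coming from the minus sign in $h_E$, exactly as in Lemma \ref{lem 4.2}. One must also make sure the normalization $\omega_{\mathrm{can}}=\frac{\sqrt{-1}}{2}dz\wedge d\bar z$ produces precisely the coefficient $\tfrac{\sqrt{-1}}{2}$ above, so that the combination reproduces \eqref{eq 4.2} with $C_0=-1$ and $C_1=0$.
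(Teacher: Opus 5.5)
Your proposal is correct and follows the paper's proof essentially step for step. You reduce to $\Lambda_\Omega F_{h_E}=0$ via Lemma~\ref{lem 1.2}; you use the flatness of $(M_{-1},h^{(-1)})$ and \eqref{eq 4.1} to get $F_{22}=-F_{11}$ with $\Lambda_\Omega F_{11}=-\frac{\sqrt{-1}}{2}p^\ast\big(\sqrt{-1}\Lambda_{\omega_X}F_{h_L}-|\phi|^2_{h_L}\big)=0$ by \eqref{eq 4.2}; and you dispose of the off-diagonal blocks because $\partial_{h^{(-1)}}\alpha=0$ and $\bar\partial_{\mathbb{C}}\beta=0$ leave only mixed $X$--$\mathbb{C}$ forms, which $\Lambda_\Omega$ annihilates, exactly as the paper does (including the sign conventions from Lemma~\ref{lem 4.2} and the normalization $\tfrac14\cdot 2\sqrt{-1}=\tfrac{\sqrt{-1}}{2}$).
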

\bpr
We denote by $F_{h_E}$ the curvature of $\nabla_{h_E}$.
To show that $\nabla_{h_E}$ is an ASD connection, it is enough to show 
\[
\Lambda_{\Omega}F_{h_E}=0
\]
by Lemma \ref{lem 1.2}.
Since $E$ is rank 2, we write 
\[
F_{h_E}
=
\begin{pmatrix}
F_{11}&F_{12}\\
F_{21}&F_{22}
\end{pmatrix}.
\]
Recall that $F_{h_E}\in A^{1,1}(\mathrm{End}E)$ and therefore
\be\label{eq 4.6}
F_{h_E}=\partial_{h_E}\bar\partial_E+\bar\partial_E\partial_{h_E}.
\ee
To show $\Lambda_{\Omega}F_{h_E}=0$, we show $\Lambda_{\Omega}F_{ij}=0$ for $i,j\in\{1,2\}$.
We first show this for the diagonal part.
Note that the Chern connection of $M_{-1}$ associated with $(\bar\partial_\mathbb{C},h^{(-1)})$ is the exterior derivative $d$ and hence flat.
Then, by \eqref{eq 4.1}, \eqref{eq 4.3}, Lemma \ref{lem 4.2}, and \eqref{eq 4.6}, we have 
\begin{align*}
F_{11}&=p^\ast F_{h^{1/2}_L}+\frac14\cdot p^\ast|\phi|^2_{h_L}\otimes q^\ast(\alpha\wedge\beta)\\
&=\frac12p^\ast F_{h_L}+\frac{\sqrt{-1}}{2}\cdot p^\ast |\phi|^2_{h_L}\otimes q^\ast\omega_{\mathrm{can}},\\
F_{22}&=p^\ast F_{h^{-1/2}_L}+\frac14p^\ast|\phi|^2_{h_L}\otimes q^\ast(\beta\wedge\alpha)\\
&=-\frac12p^\ast F_{h_L}-\frac{\sqrt{-1}}{2}\cdot p^\ast |\phi|^2_{h_L}\otimes q^\ast\omega_{\mathrm{can}}.
\end{align*}
Hence $F_{22}=-F_{11}$ and therefore $\Lambda_{\Omega}F_{11}=0$ implies $\Lambda_{\Omega}F_{22}=0$.
Since $\Omega$ has the form \eqref{eq 4.5}, we have
\[
\begin{split}
\Lambda_{\Omega}F_{11}&=\frac12p^\ast \Lambda_{\omega_X} F_{h_L}+\frac{\sqrt{-1}}{2}\cdot p^\ast |\phi|^2_{h_L}\\
&=-\frac{\sqrt{-1}}{2}\bigg(\sqrt{-1}p^\ast\Lambda_{\omega_X}F_{h_L}-p^\ast|\phi|^2_{h_L}\bigg)\\
&=0.
\end{split}
\]
The last equation follows from the assumption that the triple $((L,\bar\partial_L),\phi,h_L)$ satisfies Manton's five vortex equation with $(C_0,C_1)=(-1,0)$ (See \eqref{eq 4.2}).

We next show $\Lambda_{\Omega}F_{12}=\Lambda_{\Omega}F_{21}=0$.
By \eqref{eq 4.6}, we have 
\begin{align*}
F_{12}&=(p^\ast\partial_{h^{1/2}_L}\otimes\mathrm{Id}+\mathrm{Id}\otimes q^\ast \partial_{h^{(-1)}})\bigg(\frac12\cdot p^\ast\phi\otimes q^\ast\alpha\bigg)+\bigg(\frac12\cdot p^\ast\phi\otimes q^\ast\alpha\bigg)p^\ast\partial_{h^{-1/2}_L}\\
&=\frac12\cdot p^\ast\partial_{h_L}\phi\otimes q^\ast\alpha+\frac12\cdot p^\ast\phi\otimes q^\ast\partial_{h^{(-1)}}\alpha.
\end{align*}
It is clear from the definition of $\alpha$ and $h^{(-1)}$ that we have
\[
\begin{split}
\partial_{h^{(-1)}}\alpha=0.
\end{split}
\]
Hence 
\[
F_{12}=\frac12\cdot p^\ast\partial_{h_L}\phi\otimes q^\ast\alpha
\]
and therefore
\[
\Lambda_{\Omega}F_{12}=0
\]
since $F_{12}$ only contains differential forms which have mixed contributions of $X$ and $\mathbb{C}$.
By a similar calculation, we obtain
\[
F_{21}= \frac12\cdot p^\ast\bar\partial_L(h_L\overline\phi)\otimes q^\ast\beta+\frac12\cdot p^\ast h_L\overline\phi\otimes q^\ast\bar\partial_{\mathbb{C}}\beta
\]
By the definition of $\beta$, we have
\[
\bar\partial_{\mathbb{C}}\beta=0.
\]
As a consequence
\[
\Lambda_{\Omega}F_{21}=0
\]
since $F_{21}$ only contains differential forms which have mixed contribution of $X$ and $\mathbb{C}$.
Hence
\[
\Lambda_{\Omega}F_{h_E}=0
\]
and therefore $\nabla_{h_E}$ is an ASD connection with respect to $\Omega$.
\epr
We summarize the results so far.
\begin{thm}\label{thm 4.1}
Let $X$ be a Riemann surface with a K\"ahler form $\omega_X$.
Let $(L,\bar\partial_L)$ be a holomorphic line bundle over $X$ satisfying Assumption \eqref{eq 2.3}.
Let $\phi$ be a non-trivial holomorphic section of $(L, \bar\partial_L)$.
\par
From $(L,\bar\partial_L)$ and $\phi$, we can construct an $SE(2)$-equivariant holomorphic bundle $(E,\bar\partial_E)$ on $X\times \mathbb{C}$ as
\begin{align*}
E:&=p^\ast L^{1/2}\otimes q^\ast M_{-1}\oplus p^\ast L^{-1/2},\\
\bar\partial_E:&=
\begin{pmatrix}
p^\ast\bar\partial_{L^{1/2}}\otimes \mathrm{Id}+\mathrm{Id}\otimes q^\ast\bar\partial_{\mathbb{C}}&\frac12\cdot p^\ast\phi\otimes q^\ast\alpha\\
0&p^\ast\bar\partial_{L^{-1/2}}
\end{pmatrix}.
\end{align*}
Let $h_L$ be a Hermitian metric on $L$.
Suppose that the triple $((L,\bar\partial_L),\phi,h_L)$ satisfies Manton's five vortex equations with $(C_0,C_1)=(-1,0)$.
Then, the unique connection whose $(0,1)$-part is $\bar\partial_E$ and compatible with the pseudo-Hermitian metric of signature $(1,1)$
\[
h_E:=\begin{pmatrix}
p^\ast h^{1/2}_L\otimes q^\ast h^{(-1)}&0\\
0&-p^\ast h^{-1/2}_L
\end{pmatrix}
\]
 is an ASD connection with respect to the K\"ahler form $\Omega:=p^\ast\omega_X\oplus q^\ast\omega_{\mathrm{can}}$.
\end{thm}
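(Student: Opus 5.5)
Theorem \ref{thm 4.1} is a summary of the constructions in Section \ref{sec 4.2}, so the plan is to assemble the ingredients already established there rather than compute anything new.

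First, we check that $(E,\bar\partial_E)$ is an $SE(2)$-equivariant holomorphic bundle; this is exactly the proposition stated after \eqref{eq 4.3}. Its proof follows Proposition \ref{prop 2.2}. One needs $\bar\partial_E^2=0$, which holds because $\phi$ is holomorphic, $\bar\partial_{\mathbb{C}}\alpha=0$ (as $\alpha=d\bar z\otimes e_{-1}$ with a flat frame), and $\alpha\wedge\alpha=0$. One also needs that the off-diagonal term $\frac12 p^\ast\phi\otimes q^\ast\alpha$ is $SE(2)$-invariant. That is the content of the lemma giving $\alpha$ as a generator of $A^{0,1}(M_{-1})^{SE(2)}$, together with the trivial action on the $X$-factors. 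Next, $h_E$ in \eqref{eq 4.4} is $SE(2)$-invariant of signature $(1,1)$, since $h^{(-1)}$ is invariant and the second summand carries the trivial action. Proposition \ref{prop 1.1} then gives the unique compatible connection $\nabla_{h_E}$ with $(0,1)$-part $\bar\partial_E$.

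Second, we compute $\partial_{h_E}$, which is Lemma \ref{lem 4.2}. To do so, we check compatibility against $h_E$ blockwise, as in Lemma \ref{lem 2.2}. The $h_E$-adjoint of the upper-right block $\frac12 p^\ast\phi\otimes q^\ast\alpha$ acquires a sign $+$ because of the $-1$ in the second diagonal entry. This yields the lower-left block $\frac12 h_L p^\ast\bar\phi\otimes q^\ast\beta$, using that $\bar\alpha$ paired with $h^{(-1)}$ gives $\beta$ (here $h^{(-1)}\equiv1$).

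Third, and this is the core step, we verify $\Lambda_\Omega F_{h_E}=0$ and invoke Lemma \ref{lem 1.2}. We use $F_{h_E}=\partial_{h_E}\bar\partial_E+\bar\partial_E\partial_{h_E}$ and split the computation into blocks.
\begin{itemize}
\item For the diagonal blocks, flatness of $h^{(-1)}$ and \eqref{eq 4.1} give $F_{11}=\frac12p^\ast F_{h_L}+\frac{\sqrt{-1}}2 p^\ast|\phi|^2_{h_L}\,q^\ast\omega_{\mathrm{can}}$ and $F_{22}=-F_{11}$. Contracting with $\Omega=p^\ast\omega_X\oplus q^\ast\omega_{\mathrm{can}}$ gives $-\frac{\sqrt{-1}}2(\sqrt{-1}\Lambda_{\omega_X}F_{h_L}-|\phi|^2_{h_L})$, which vanishes by \eqref{eq 4.2}.
\item For the off-diagonal blocks, we use $\partial_{h^{(-1)}}\alpha=0$ and $\bar\partial_{\mathbb{C}}\beta=0$. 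These leave only mixed forms of type $p^\ast(1,0)\wedge q^\ast(0,1)$ (and its conjugate type), which are killed by $\Lambda_\Omega$ because $\Omega$ is a product.
\end{itemize}

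The main obstacle is the sign and normalization bookkeeping in the second and third steps. The coefficient $\frac12$ and the factor $2\sqrt{-1}$ in \eqref{eq 4.1} must combine to give exactly $\frac{\sqrt{-1}}2|\phi|^2$, and the sign coming from the signature-$(1,1)$ metric must produce $-|\phi|^2$ rather than $+|\phi|^2$. Once these are fixed, the argument is formally identical to the proofs in Sections \ref{sec 2.2} and \ref{sec 3.2}.
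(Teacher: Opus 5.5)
Your proposal is correct and follows the paper's argument essentially step for step. It checks equivariance of $\bar\partial_E$ and invariance of $h_E$, takes $\partial_{h_E}$ from Lemma \ref{lem 4.2} with the sign coming from the $-1$ entry of $h_E$, and then obtains $\Lambda_\Omega F_{h_E}=0$ from $F_{22}=-F_{11}$, the vortex equation \eqref{eq 4.2}, and the mixed type of $F_{12}$ and $F_{21}$, exactly as in the proof preceding Theorem \ref{thm 4.1}.
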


\subsection{From ASD Connections to Vortex Equations}\label{sec 4.3}
We use the same notation as in the previous section.
\par
In this section, we construct a solution of Manton's five vortex equation with $(C_0,C_1)=(-1,0)$ on $X$ from an ASD connection on $X\times \mathbb{C}$ satisfying certain conditions.
A large part of this section proceeds as in Sections \ref{sec 2.3} and \ref{sec 3.3}:
 We first construct a holomorphic bundle on $X\times\mathbb{C}$ from holomorphic line bundles on $X$, then show that if that bundle is $SE(2)$-equivariant, admits an $SE(2)$-invariant pseudo-Hermitian metric of signature $(1,1)$, and the connection associated with the holomorphic structure and the metric is an ASD connection, then it gives rise to a solution of the vortex equation.
\par
Let $(L_1,\bar\partial_{L_1})$ be a holomorphic line bundle over $X$.
We assume 
\[
\mathrm{deg}L_1>0
\]
if $X$ is compact. 
Let $(L_2,\bar\partial_{L_2})$ be a holomorphic line bundle over $X$.
We assume 
\[
\mathrm{deg}L_2=-\mathrm{deg}L_1
\]
if $X$ is compact.
Let $(E,\bar\partial_E)$ be a holomorphic bundle on $X\times\mathbb{C}$ that is an extension of 
$(p^\ast L_2, p^\ast\bar\partial_{L_2})$ by $(p^\ast L_1\otimes q^\ast M_{-1}, p^\ast\bar\partial_{L_1}\otimes\mathrm{Id}+\mathrm{Id}\otimes q^\ast\bar\partial_{\mathbb{C}})$.
Thus, we have a short exact sequence of holomorphic vector bundles
\[
0\longrightarrow
p^\ast L_1\otimes q^\ast M_{-1}
\longrightarrow E
\longrightarrow
p^\ast L_2
\longrightarrow 0.
\]
Since this is also a short exact sequence of smooth bundles, and such a sequence splits,
\[
E\simeq p^\ast L_1\otimes q^\ast M_{-1}\oplus p^\ast L_2
\]
as smooth bundles.
From now on, we identify $E$ and $p^\ast L_1\otimes q^\ast M_{-1}\oplus p^\ast L_2$.
Let $\bar\partial_E$ be the Dolbeault operator of $E$.
Since $E$ is given by an extension, $\bar\partial_E$ can be written as
\[
\bar\partial_E=
\begin{pmatrix}
p^\ast\bar\partial_{L_1}\otimes\mathrm{Id}+\mathrm{Id}\otimes q^\ast\bar\partial_{\mathbb{C}}& \psi\\
0&p^\ast\bar\partial_{L_2}
\end{pmatrix}
\]
where $\psi\in A^{0,1}(\mathrm{Hom}(p^\ast L_2, p^\ast L_1\otimes q^\ast M_{-1}))$.\par
The bundle $E$ is $SE(2)$-equivariant with respect to the $SE(2)$-action induced by that on $M_{-1}$.
Moreover, if $(E,\bar\partial_E)$ is an $SE(2)$-equivariant holomorphic bundle, then $\psi$ has a specific form as in Proposition \ref{prop 2.5}. 
\begin{prop}\label{prop 4.3}
If $(E,\bar\partial_E)$ is an $SE(2)$-equivariant holomorphic bundle, then
\begin{itemize}
\item[(1)] there exists $\phi\in A(\mathrm{Hom}(L_2,L_1))$ such that 
\[
\psi=p^\ast\phi\otimes q^\ast\alpha.
\]
\item[(2)] $\phi$ is a holomorphic section of $(L_1\otimes L_2^{-1},\bar\partial_{L_1}\otimes \mathrm{Id}+ \mathrm{Id}\otimes\bar\partial_{L^{-1}_2})$.
Here $(L^{-1}_2,\bar\partial_{L^{-1}_2})$ is the dual holomorphic line bundle of $(L_2,\bar\partial_{L_2})$.
\end{itemize}
See Section \ref{sec 4.1} for $\alpha$. 
\end{prop}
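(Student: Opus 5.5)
The plan follows the argument behind Proposition \ref{prop 2.5} (cf. \cite[3.5. Proposition (b)]{GP2} and \cite[Proposition 3.5]{Ono}), adapted to $SE(2)$ acting on $\mathbb{C}=SE(2)/U(1)$.

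\emph{Step 1: $\psi$ is $SE(2)$-equivariant.} By hypothesis the $SE(2)$-action on $E$ is biholomorphic. We use the same action as on the smooth splitting $p^\ast L_1\otimes q^\ast M_{-1}\oplus p^\ast L_2$, which is diagonal. Hence $g^\ast\bar\partial_E=\bar\partial_E$ for all $g\in SE(2)$. The diagonal entries of $\bar\partial_E$ are already invariant, since $M_{-1}$ is an equivariant holomorphic line bundle and the action on the $X$-factors is trivial. Comparing off-diagonal entries then gives $g^\ast\psi=\psi$. So $\psi$ is an $SE(2)$-equivariant section of $A^{0,1}(p^\ast(L_1\otimes L_2^{-1})\otimes q^\ast M_{-1})$.

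\emph{Step 2: reduction to the $\mathbb{C}$-direction.} Decompose $\psi=\psi_X+\psi_{\mathbb{C}}$ according to $T^{0,1\ast}(X\times\mathbb{C})=p^\ast T^{0,1\ast}X\oplus q^\ast T^{0,1\ast}\mathbb{C}$. The action preserves this splitting, so each component is separately equivariant.
\begin{itemize}
\item[(a)] Fix $x\in X$ and a local frame of $L_1\otimes L_2^{-1}\otimes T^{0,1\ast}X$ near $x$. Then $\psi_X$ restricted to $\{x\}\times\mathbb{C}$ becomes an equivariant section of $M_{-1}$, which has weight $-1\neq0$. By Lemma \ref{lem 1.5}(1) it vanishes, so $\psi_X=0$.
\item[(b)] Since $T^{0,1\ast}\mathbb{C}\simeq M_1$, the component $\psi_{\mathbb{C}}$ restricted to $\{x\}\times\mathbb{C}$ is an equivariant section of $(L_1\otimes L_2^{-1})_x\otimes M_{-1}\otimes M_1=(L_1\otimes L_2^{-1})_x\otimes M_0$. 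By Lemma \ref{lem 1.4} and Lemma \ref{lem 1.5}(2), the equivariant sections form a copy of $(L_1\otimes L_2^{-1})_x$, spanned by $\alpha$ from the lemma in Section \ref{sec 4.1}.
\end{itemize}
Thus $\psi=p^\ast\phi\otimes q^\ast\alpha$ for a unique $\phi(x)\in(L_1\otimes L_2^{-1})_x$. Here $\phi$ is smooth because it equals the evaluation of $\psi$ at $z=0$ divided by $\alpha(0)=d\bar z\otimes e_{-1}$, and $\psi$ is smooth. This proves (1).

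\emph{Step 3: holomorphicity.} Integrability $\bar\partial_E^2=0$ has as its off-diagonal entry
\[
(p^\ast\bar\partial_{L_1}\otimes\mathrm{Id}+\mathrm{Id}\otimes q^\ast\bar\partial_{\mathbb{C}})\psi+\psi\, p^\ast\bar\partial_{L_2}=0,
\]
that is, $\bar\partial(p^\ast\phi\otimes q^\ast\alpha)=0$ with the induced operator on $p^\ast(L_1\otimes L_2^{-1})\otimes q^\ast M_{-1}$. Expanding gives
\[
p^\ast(\bar\partial_{L_1\otimes L_2^{-1}}\phi)\wedge q^\ast\alpha+p^\ast\phi\otimes q^\ast(\bar\partial_{\mathbb{C}}\alpha)=0.
\]
The second term vanishes because $\alpha$ is a $(0,1)$-form on the one-dimensional $\mathbb{C}$. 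The first term is a mixed $(0,2)$-form, and $\alpha$ is nowhere zero, so $\bar\partial_{L_1\otimes L_2^{-1}}\phi=0$. This proves (2).

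The only delicate point is Step 2. The equivariance lemmas are stated for line bundles over $G/U(1)$, while $\psi$ lives on the product $X\times\mathbb{C}$. I would handle this pointwise in $X$, as above, by trivializing the $X$-factor near each point; the action on that factor is trivial, so equivariance is unaffected. One also needs the sign and weight bookkeeping $\mathrm{Hom}(M_0,M_{-1})\otimes M_1\simeq M_0$ to be correct.
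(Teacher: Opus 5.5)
Your proposal is correct and follows the route the paper relies on: it justifies Proposition \ref{prop 4.3} ``as in Proposition \ref{prop 2.5}'', deferring to Garc\'ia-Prada and the author's earlier work. The argument runs through invariance of the off-diagonal term $\psi$, then the weight count, where the $X$-component has weight $-1$ and vanishes while the $\mathbb{C}$-component has weight $0$ and equals $p^\ast\phi\otimes q^\ast\alpha$ with $\phi(x)$ read off at $z=0$, and finally integrability $\bar\partial_E^2=0$ to get $\bar\partial\phi=0$. Your treatment of the point you flagged as delicate also works: writing $\psi_{\mathbb{C}}=f\cdot q^\ast\alpha$ with $f$ invariant forces $f$ to be constant along each fibre $\{x\}\times\mathbb{C}$, by transitivity.
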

Since multiplying a holomorphic section by a nonzero constant preserves holomorphicity, $\frac12\phi$ is also holomorphic.
Thus, after replacing $\frac12\phi$ by $\phi$, we may write
\[
\psi
=
\frac12\cdot p^\ast\phi\otimes q^\ast\alpha.
\]
We keep the normalization of $\alpha$ fixed.\par
We next prepare a structural result on $SE(2)$-invariant pseudo-Hermitian metrics of signature $(1,1)$ on $E$ as in Proposition \ref{prop 2.6}.
\begin{prop}\label{prop 4.4}
Let $h_E$ be an $SE(2)$-invariant pseudo-Hermitian metric of signature $(1,1)$ on $E$.
Then there exist Hermitian metrics $h_{L_1}$ of $L_1$ and $h_{L_2}$ of $L_2$ such that
\[
h_E
=
\begin{pmatrix}
\epsilon_0\cdot p^\ast h_{L_1}\otimes q^\ast h^{(-1)}&0\\
0& \epsilon_1\cdot p^\ast h_{L_2}
\end{pmatrix},\,\,(\epsilon_0,\epsilon_1)\in\{(1,-1), (-1,1)\}.
\]
\end{prop}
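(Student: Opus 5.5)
The statement to prove is Proposition \ref{prop 4.4}: an $SE(2)$-invariant pseudo-Hermitian metric of signature $(1,1)$ on $E\simeq p^\ast L_1\otimes q^\ast M_{-1}\oplus p^\ast L_2$ is block diagonal, with each block a pulled-back Hermitian metric times a sign, and the signs are opposite. I would follow the pattern of Propositions \ref{prop 2.6} and \ref{prop 3.4}, which rest on Lemmas \ref{lem 1.5} and \ref{lem 1.6}.

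\textbf{Step 1: write $h_E$ in blocks.} With respect to the smooth splitting, write
\[
h_E=\begin{pmatrix} a & b\\ \bar b & c\end{pmatrix}.
\]
Here:
\begin{itemize}
\item $a$ is a Hermitian form on $p^\ast L_1\otimes q^\ast M_{-1}$;
\item $c$ is a Hermitian form on $p^\ast L_2$;
\item $b$ is a section of $\mathrm{Hom}\big(\overline{p^\ast L_2},(p^\ast L_1\otimes q^\ast M_{-1})^\vee\big)\cong p^\ast(L_1^{-1}\otimes \overline{L_2}^{\,-1})\otimes q^\ast M_{1}$, since the dual of $M_{-1}$ is $M_1$ by Lemma \ref{lem 1.4}.
\end{itemize}
$SE(2)$ acts trivially on the factors from $X$ and preserves the splitting, so $SE(2)$-invariance of $h_E$ is equivalent to invariance of each block separately.

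\textbf{Step 2: the off-diagonal block vanishes.} Fix $x\in X$ and restrict $b$ to $\{x\}\times\mathbb{C}$. After choosing a basis of the fixed fibres $(L_1^{-1}\otimes\overline{L_2}^{\,-1})_x$, the restriction becomes an $SE(2)$-equivariant section of $M_1$, which has weight $1\neq 0$. Lemma \ref{lem 1.5}(1) then forces it to vanish. Since $x$ is arbitrary, $b=0$.

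\textbf{Step 3: the diagonal blocks.} For $a$, fix $x$ and a nonzero $v\in (L_1)_x$. The form $a(v\otimes\cdot,v\otimes\cdot)$ is an $SE(2)$-invariant Hermitian form on $M_{-1}$, and it is nondegenerate because $h_E$ is nondegenerate and $b=0$. By Lemma \ref{lem 1.6} it equals $\lambda(x,v)\,h^{(-1)}$ for some nonzero real $\lambda(x,v)$. Sesquilinearity in $v$ shows that $a=p^\ast k_1\otimes q^\ast h^{(-1)}$, where $k_1$ is a nondegenerate Hermitian form on the line bundle $L_1$. The form $c$ is simpler: it is invariant under the action on $\mathbb{C}$, which includes all translations and is transitive. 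Hence $c$ is constant along each slice $\{x\}\times\mathbb{C}$, so $c=p^\ast k_2$ with $k_2$ nondegenerate on $L_2$.

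\textbf{Step 4: smoothness and signs.} Smoothness of $k_1$ follows by evaluating $a$ at $z=0$ and using $h^{(-1)}(e_{-1},e_{-1})=1$; smoothness of $k_2$ is immediate. A nondegenerate Hermitian form on a line bundle has constant sign on each connected component, so take $X$ connected and write $k_i=\epsilon_i h_{L_i}$ with $h_{L_i}$ positive definite and $\epsilon_i\in\{\pm1\}$. Signature $(1,1)$ then forces $\epsilon_0\epsilon_1=-1$, giving the two cases $(\epsilon_0,\epsilon_1)\in\{(1,-1),(-1,1)\}$.

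\textbf{Main obstacle.} The only delicate point is Step 2. There one must check that the weight bookkeeping of Lemma \ref{lem 1.4} really gives weight $1$ for the $q^\ast$ factor of the off-diagonal block, and that the fibrewise application of Lemma \ref{lem 1.5} is justified, with the $X$-dependence treated as parameters.
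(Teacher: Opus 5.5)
Your proposal is correct and follows the argument the paper relies on. The paper gives no separate proof: it refers back to Proposition \ref{prop 2.6} and the works cited there, where the off-diagonal block is killed by Lemma \ref{lem 1.5} and the diagonal blocks are fixed up to real constants by Lemma \ref{lem 1.6}, exactly as you do — and your weight check is right, since $M_{-1}^\vee\cong M_1$ and $p^\ast L_2$ carries the trivial action, so the off-diagonal block has weight $1$ on each slice $\{x\}\times\mathbb{C}$.
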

We now construct a solution of Manton's five vortex equation with $(C_0,C_1)=(-1,0)$ from an ASD connection.
Throughout this argument, we work with the K\"ahler form $\Omega$ constructed in \eqref{eq 4.5}.
\par
Suppose that $\bar\partial_E$ defines an $SE(2)$-equivariant holomorphic bundle structure on $E$.
Then by Proposition \ref{prop 4.3}, there exists a holomorphic section $\phi$ of $(L_1\otimes L_2^{-1},\bar\partial_{L_1}\otimes \mathrm{Id}+ \mathrm{Id}\otimes\bar\partial_{L^{-1}_2})$ such that 
\be\label{eq 4.7}
\bar\partial_E=
\begin{pmatrix}
p^\ast\bar\partial_{L_1}\otimes\mathrm{Id}+\mathrm{Id}\otimes q^\ast\bar\partial_{\mathbb{C}}& \frac12\cdot p^\ast\phi\otimes q^\ast\alpha\\
0&p^\ast\bar\partial_{L_2}
\end{pmatrix}
\ee
Let $h_E$ be as in Proposition \ref{prop 4.4}, and let $\nabla_{h_E}$ be the unique connection compatible with $h_E$ and satisfying $\nabla^{0,1}_{h_E}=\bar\partial_E$.
Then, by Lemma \ref{lem 4.2}, $\partial_{h_E}=\nabla^{1,0}_{h_E}$ has the form 
\be\label{eq 4.8}
\partial_{h_E}=
\begin{pmatrix}
p^\ast\partial_{h_{L_1}}\otimes\mathrm{Id}+\mathrm{Id}\otimes q^\ast\partial_{h^{(-1)}}&0\\
\frac12 \cdot p^\ast (h_{L_1}h^{-1}_{L_2}\overline \phi)\otimes q^\ast \beta& p^\ast\partial_{h_{L_2}}
\end{pmatrix}
\ee
We denote the curvature of $\nabla_{h_E}$ by $F_{h_E}$.
We denote by $h_{L_1}h^{-1}_{L_2}$ the Hermitian metric on $L_1\otimes L^{-1}_2$ induced by $h_{L_1}$ and $h_{L_2}$.
\begin{prop}\label{prop 4.5}
If $\nabla_{h_E}$ is an ASD connection with respect to $\Omega$, then the triple 
\[
\big((L_1\otimes L^{-1}_2, \bar\partial_{L_1}\otimes \mathrm{Id}+\mathrm{Id}\otimes \bar\partial_{L^{-1}_2}),\phi,h_{L_1}h^{-1}_{L_2}\big)
\]
satisfies Manton's five vortex equation with $(C_0,C_1)=(-1,0)$ on $(X,\omega_X)$.
\end{prop}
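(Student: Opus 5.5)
We follow the proofs of Propositions \ref{prop 2.7} and \ref{prop 3.5}. Write the curvature as a $2\times 2$ block matrix $F_{h_E}=(F_{ij})$ with respect to the smooth splitting $E\simeq p^\ast L_1\otimes q^\ast M_{-1}\oplus p^\ast L_2$. Since $\partial_{h_E}^2=0$ and $\bar\partial_E^2=0$, we have $F_{h_E}=\partial_{h_E}\bar\partial_E+\bar\partial_E\partial_{h_E}$. Substituting the explicit forms \eqref{eq 4.7} and \eqref{eq 4.8}, we compute the diagonal blocks. The Chern connection of $(M_{-1},h^{(-1)})$ is flat, so $q^\ast F_{h^{(-1)}}=0$. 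Using \eqref{eq 4.1}, we expect
\begin{align*}
F_{11}&=p^\ast F_{h_{L_1}}+\tfrac14 p^\ast|\phi|^2_{h_{L_1}h^{-1}_{L_2}}\otimes q^\ast(\alpha\wedge\beta)
=p^\ast F_{h_{L_1}}+\tfrac{\sqrt{-1}}{2}p^\ast|\phi|^2_{h_{L_1}h^{-1}_{L_2}}\otimes q^\ast\omega_{\mathrm{can}},\\
F_{22}&=p^\ast F_{h_{L_2}}+\tfrac14 p^\ast|\phi|^2_{h_{L_1}h^{-1}_{L_2}}\otimes q^\ast(\beta\wedge\alpha)
=p^\ast F_{h_{L_2}}-\tfrac{\sqrt{-1}}{2}p^\ast|\phi|^2_{h_{L_1}h^{-1}_{L_2}}\otimes q^\ast\omega_{\mathrm{can}}.
\end{align*}
The sign of the off-diagonal term in \eqref{eq 4.8} is the one given by Lemma \ref{lem 4.2}.

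By Lemma \ref{lem 1.2}, the ASD assumption gives $\Lambda_\Omega F_{h_E}=0$, and in particular $\Lambda_\Omega F_{11}=\Lambda_\Omega F_{22}=0$. Since $\Omega=p^\ast\omega_X\oplus q^\ast\omega_{\mathrm{can}}$, contraction acts separately on the two factors, and $\Lambda_{\omega_{\mathrm{can}}}\omega_{\mathrm{can}}=1$. This gives
\[
0=p^\ast\Lambda_{\omega_X}F_{h_{L_1}}+\tfrac{\sqrt{-1}}{2}p^\ast|\phi|^2,\qquad
0=p^\ast\Lambda_{\omega_X}F_{h_{L_2}}-\tfrac{\sqrt{-1}}{2}p^\ast|\phi|^2 .
\]
We then subtract the second equation from the first and use $F_{h_{L_1}h^{-1}_{L_2}}=F_{h_{L_1}}-F_{h_{L_2}}$. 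This yields $p^\ast(\Lambda_{\omega_X}F_{h_{L_1}h^{-1}_{L_2}}+\sqrt{-1}|\phi|^2)=0$. Multiplying by $\sqrt{-1}$ gives $\sqrt{-1}\Lambda_{\omega_X}F_{h_{L_1}h^{-1}_{L_2}}-|\phi|^2_{h_{L_1}h^{-1}_{L_2}}=0$. This is \eqref{eq 4.2}, the $(C_0,C_1)=(-1,0)$ equation, pulled back by $p$. Since $p$ is surjective, the equation holds on $X$.

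The only delicate step is the sign bookkeeping. First, the sign of the lower-left entry of $\partial_{h_E}$ depends on $(\epsilon_0,\epsilon_1)$ in Proposition \ref{prop 4.4}. In both cases $(1,-1)$ and $(-1,1)$ the ratio $\epsilon_0/\epsilon_1=-1$, so the sign is the same, and we will check this through the $h_E$-compatibility computation behind Lemma \ref{lem 4.2}. Second, we must check that $\alpha\wedge\beta$ and $\beta\wedge\alpha$, interpreted through the pairing $M_{-1}\otimes M_1\to\mathbb{C}$ with $h^{(\pm1)}\equiv1$, give $\pm2\sqrt{-1}\omega_{\mathrm{can}}$ with opposite signs. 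The off-diagonal blocks are not needed for this direction, since only the vanishing of the diagonal contractions is used.
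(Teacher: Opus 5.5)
Your proposal is correct and follows the paper's proof essentially line by line. It uses the same block decomposition of $F_{h_E}=\partial_{h_E}\bar\partial_E+\bar\partial_E\partial_{h_E}$, the same diagonal blocks (via flatness of $h^{(-1)}$ and \eqref{eq 4.1}), and the same subtraction of the two contracted equations $\Lambda_\Omega F_{11}=\Lambda_\Omega F_{22}=0$. Your extra remark that the sign of the lower-left entry of $\partial_{h_E}$ depends only on $\epsilon_0/\epsilon_1=-1$, and is therefore the same for both sign choices in Proposition \ref{prop 4.4}, is a correct detail that the paper leaves implicit.
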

\begin{proof}
Since $E$ is a rank $2$ bundle, we can write $F_{h_E}$ as
\[
F_{h_E}=
\begin{pmatrix}
F_{11}&F_{12}\\
F_{21}&F_{22}
\end{pmatrix}.
\]
Since $\partial_{h_E}^2=0,$ we have
\[
F_{h_E}=\partial_{h_E}\bar\partial_E+\bar\partial_E\partial_{h_E}.
\]
Since $\bar\partial_{E}$ and $\partial_{h_E}$ have the form as in \eqref{eq 4.7} and \eqref{eq 4.8}, we have 
\[
\begin{split}
F_{11}&=p^\ast F_{h_{L_1}}+\frac14\cdot p^\ast|\phi|^2_{h_{L_1}h^{-1}_{L_2}}\otimes q^\ast (\alpha\wedge\beta)\\
&=p^\ast F_{h_{L_1}}+\frac{\sqrt{-1}}{2}\cdot p^\ast|\phi|^2_{h_{L_1}h^{-1}_{L_2}}\otimes q^\ast \omega_{\mathrm{can}}
\end{split}
\]
and 
\[
\begin{split}
F_{22}&=p^\ast F_{h_{L_2}}+\frac14\cdot p^\ast|\phi|^2_{h_{L_1}h^{-1}_{L_2}}\otimes q^\ast (\beta\wedge\alpha)\\
&=p^\ast F_{h_{L_2}}-\frac{\sqrt{-1}}{2}\cdot p^\ast|\phi|^2_{h_{L_1}h^{-1}_{L_2}}\otimes q^\ast\omega_{\mathrm{can}}.
\end{split}
\]
We note that we used the relation \eqref{eq 4.1} in the equations above.
We now assume that $\nabla_{h_E}$ is an ASD connection with respect to the K\"ahler form $\Omega$.
Then, $\Lambda_{\Omega}F_{h_E}=0$ by Lemma \ref{lem 1.2}.
This implies $\Lambda_{\Omega}F_{11}=0$ and $\Lambda_{\Omega}F_{22}=0$.
Therefore, we have 
\begin{align*}
&0=\Lambda_{\Omega}F_{11}= p^\ast \Lambda_{\omega_X}F_{h_{L_1}}+\frac{\sqrt{-1}}{2}\cdot p^\ast|\phi|^2_{h_{L_1}h^{-1}_{L_2}},\\
&0=\Lambda_{\Omega}F_{22}= p^\ast \Lambda_{\omega_X}F_{h_{L_2}}-\frac{\sqrt{-1}}{2}\cdot p^\ast|\phi|^2_{h_{L_1}h^{-1}_{L_2}}.
\end{align*}
Subtracting the second equation from the first equation, we obtain
\be\label{eq 4.9}
\begin{split}
0&=p^\ast (\Lambda_{\omega_X}F_{h_{L_1}}-\Lambda_{\omega_X}F_{h_{L_2}})+\sqrt{-1} p^\ast|\phi|^2_{h_{L_1}h^{-1}_{L_2}}\\
&=-\sqrt{-1}p^\ast\bigg(\sqrt{-1}\Lambda_{\omega_X}F_{h_{L_1}h^{-1}_{L_2}}-|\phi|^2_{h_{L_1}h^{-1}_{L_2}}\bigg).
\end{split}
\ee
Here $F_{h_{L_1}h^{-1}_{L_2}}$ is the curvature of the Chern connection associated with $(L_1\otimes L^{-1}_2,\bar\partial_{L_1}\otimes \mathrm{Id}+\mathrm{Id}\otimes\bar\partial_{L^{-1}_2})$ and $h_{L_1}h^{-1}_{L_2}$.
The equation \eqref{eq 4.9} implies 
\[
\sqrt{-1}\Lambda_{\omega_X}F_{h_{L_1}h^{-1}_{L_2}}-|\phi|^2_{h_{L_1}h^{-1}_{L_2}}=0.
\]
Hence the triple 
\[
\big((L_1\otimes L^{-1}_2, \bar\partial_{L_1}\otimes \mathrm{Id}+\mathrm{Id}\otimes \bar\partial_{L^{-1}_2}),\phi,h_{L_1}h^{-1}_{L_2}\big)
\]
satisfies Manton's five vortex equation with $(C_0,C_1)=(-1,0)$ on $(X,\omega_X)$.
\end{proof}
We summarize the results.
\begin{thm}\label{thm 4.2}
Let $X$ be a Riemann surface with a K\"ahler form $\omega_X$. 
Let $(L_1,\bar\partial_{L_1})$ be a holomorphic line bundle over $X$.
We assume 
\[
\mathrm{deg}L_1>0
\] if $X$ is compact.
Let $(L_2,\bar\partial_{L_2})$ be a holomorphic line bundle over $X$.
We assume 
\[
\mathrm{deg}L_2=-\mathrm{deg}L_1
\]
if $X$ is compact.
\par
Let $E$ be a holomorphic bundle over $X\times \mathbb{C}$ that is an extension of 
$(p^\ast L_2,p^\ast\bar\partial_{L_2})$ by $(p^\ast L_1\otimes q^\ast M_{-1}, p^\ast\bar\partial_{L_1}\otimes\mathrm{Id}+\mathrm{Id}\otimes q^\ast\bar\partial_{\mathbb{C}})$.
We denote by $\bar\partial_E$ the holomorphic structure of $E$.
\par
If we assume 
\begin{itemize}
\item[(1)] $(E,\bar\partial_E)$ is an $SE(2)$-equivariant holomorphic bundle.
\item[(2)] $E$ admits an $SE(2)$-invariant pseudo-Hermitian metric $h_E$ of signature $(1,1)$.
\item[(3)] The unique connection $\nabla_{h_E}$ satisfying $\nabla^{0,1}_{h_E}=\bar\partial_E$ and compatible with $h_E$ is an ASD connection with respect to $\Omega$.
\end{itemize}
Then, there exists a holomorphic section $\phi$ of $(L_1\otimes L_2^{-1},\bar\partial_{L_1}\otimes \mathrm{Id}+\mathrm{Id}\otimes\bar\partial_{L^{-1}_2})$
and Hermitian metrics $h_{L_1}$ and $h_{L_2}$ of $L_1$ and $L_2$ such that the triple 
\[
\big((L_1\otimes L^{-1}_2, \bar\partial_{L_1}\otimes \mathrm{Id}+\mathrm{Id}\otimes \bar\partial_{L^{-1}_2}),\phi,h_{L_1}h^{-1}_{L_2}\big)
\]
satisfies Manton's five vortex equations with $(C_0,C_1)=(-1,0)$ on $(X,\omega_X)$.
\end{thm}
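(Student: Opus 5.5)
The plan is to assemble Theorem \ref{thm 4.2} from Propositions \ref{prop 4.3}, \ref{prop 4.4} and \ref{prop 4.5}, in the same way Theorems \ref{thm 2.2} and \ref{thm 3.2} were obtained.

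\emph{Step 1: the holomorphic structure.} The extension hypothesis lets us identify $E$ smoothly with $p^\ast L_1\otimes q^\ast M_{-1}\oplus p^\ast L_2$. Then $\bar\partial_E$ is upper triangular, with off-diagonal term $\psi\in A^{0,1}(\mathrm{Hom}(p^\ast L_2,p^\ast L_1\otimes q^\ast M_{-1}))$. By assumption (1) and Proposition \ref{prop 4.3}, we have $\psi=p^\ast\phi'\otimes q^\ast\alpha$ with $\phi'$ a holomorphic section of $L_1\otimes L_2^{-1}$. Setting $\phi:=2\phi'$, which is still holomorphic, puts $\bar\partial_E$ in the normalized form \eqref{eq 4.7}.

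\emph{Step 2: the metric.} By assumption (2) and Proposition \ref{prop 4.4}, $h_E$ is diagonal with entries $\epsilon_0 p^\ast h_{L_1}\otimes q^\ast h^{(-1)}$ and $\epsilon_1 p^\ast h_{L_2}$, where $(\epsilon_0,\epsilon_1)\in\{(1,-1),(-1,1)\}$. This produces the Hermitian metrics $h_{L_1}$ and $h_{L_2}$.

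\emph{Step 3: the $(1,0)$-part of the connection.} We compute $\partial_{h_E}$ as in Lemma \ref{lem 4.2}. The lower-left entry is the $h_E$-adjoint of the upper-right entry of $\bar\partial_E$, and with opposite signs $\epsilon_0\epsilon_1=-1$ it comes out as in \eqref{eq 4.8}.

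This is the one point I expect to need care. When $(\epsilon_0,\epsilon_1)=(-1,1)$, both entries of $h_E$ are negated relative to the $(1,-1)$ case. An overall sign change of the metric leaves the compatibility equation, and hence the connection, unchanged. So $\nabla_{h_E}$ is the same in both cases and \eqref{eq 4.8} holds uniformly. I would verify this directly from the defining identity $\partial h_E(s,t)=h_E(\partial_{h_E}s,t)+h_E(s,\bar\partial_E t)$.

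\emph{Step 4: the vortex equation.} By assumption (3), Proposition \ref{prop 4.5} applies. Through the Lemma \ref{lem 1.2} computation, $\Lambda_\Omega F_{11}=\Lambda_\Omega F_{22}=0$. Subtracting the two equations gives
\[
\sqrt{-1}\Lambda_{\omega_X}F_{h_{L_1}h^{-1}_{L_2}}-|\phi|^2_{h_{L_1}h^{-1}_{L_2}}=0,
\]
which is the $(C_0,C_1)=(-1,0)$ vortex equation. This finishes the proof.

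The degree hypotheses are not used in the argument. They only ensure consistency with Assumption \eqref{eq 2.3}, so that the converse construction of Theorem \ref{thm 4.1} can be applied.
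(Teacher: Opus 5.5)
Your proposal is correct and follows the paper's own route: Theorem \ref{thm 4.2} is obtained by combining Propositions \ref{prop 4.3}, \ref{prop 4.4} and \ref{prop 4.5}, with the normalization $\psi=\frac12 p^\ast\phi\otimes q^\ast\alpha$ and the subtraction of $\Lambda_\Omega F_{22}=0$ from $\Lambda_\Omega F_{11}=0$, exactly as you describe. Your Step 3 also checks that the two sign choices $(\epsilon_0,\epsilon_1)$ give the same connection, because the compatibility condition is linear in $h_E$; the paper leaves this implicit when it invokes Lemma \ref{lem 4.2} to obtain \eqref{eq 4.8}.
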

Let $((L_1\otimes L^{-1}_2, \bar\partial_{L_1}\otimes \mathrm{Id}+\mathrm{Id}\otimes \bar\partial_{L^{-1}_2}),\phi,h_{L_1}h^{-1}_{L_2}\big)$ be as in Theorem \ref{thm 4.2}.
We set
\begin{align*}
&N:=L_1\otimes L^{-1}_2,\\
&\bar\partial_N:=\bar\partial_{L_1}\otimes \mathrm{Id}+\mathrm{Id}\otimes \bar\partial_{L^{-1}_2},\\
&h_N:=h_{L_1}h^{-1}_{L_2}.
\end{align*}
Note that the line bundle $(N,\bar\partial_N)$ satisfies Assumption \eqref{eq 2.3}.
Hence, applying Theorem \ref{thm 4.1} to the triple $((N,\bar\partial_N),\phi,h_N)$, we obtain an $SE(2)$-equivariant holomorphic bundle $(F,\bar\partial_F)$ over $X\times \mathbb{C}$ and an $SE(2)$-invariant pseudo-Hermitian metric $h_F$ of signature $(1,1)$ on $F$ such that the unique connection $\nabla_{h_F}$ satisfying $\nabla_{h_F}^{0,1}=\bar\partial_{F}$ and compatible with $h_{F}$ is an ASD connection with respect to $\Omega$.
\par
We clarify the relation between $(E,\bar\partial_E,h_E)$ in Theorem \ref{thm 4.2} and $(F,\bar\partial_F,h_F)$.
By Theorem \ref{thm 4.1}, $(F,\bar\partial_F)$ and $h_F$ have the following forms
\begin{align*}
F&=p^\ast N^{1/2}\otimes q^\ast M_{-1}\oplus p^\ast N^{-1/2},\\
\bar\partial_F&=
\begin{pmatrix}
p^\ast\bar\partial_{N^{1/2}}\otimes \mathrm{Id}+\mathrm{Id}\otimes q^\ast\bar\partial_{\mathbb{C}}& \frac12\cdot p^\ast\phi\otimes q^\ast\alpha\\
0&p^\ast\bar\partial_{N^{-1/2}}
\end{pmatrix},\\
h_F&=\begin{pmatrix}
p^\ast h^{1/2}_N\otimes q^\ast h^{(-1)}&0\\
0&-p^\ast h^{-1/2}_N
\end{pmatrix}.
\end{align*}
We consider the holomorphic line bundle 
\[
(P,\bar\partial_P):=(L_1\otimes N^{-1/2},\bar\partial_{L_1}\otimes \mathrm{Id}+\mathrm{Id}\otimes \bar\partial_{N^{-1/2}})
\]
over $X$ and a Hermitian metric 
\[
h_P:=h_{L_1}h_{N^{-1/2}}
\] on it.
We denote by $(F,\bar\partial_F)\otimes (p^\ast P,p^\ast \bar\partial_P)$ the tensor product of holomorphic bundles.
Then, by the definition of $(P,\bar\partial_P)$ and $h_P$ we have
\begin{align*}
(E,\bar\partial_E)&\simeq (F,\bar\partial_F)\otimes (p^\ast P,p^\ast \bar\partial_P),\\
h_E&=h_F\otimes p^\ast h_P.
\end{align*}
By Proposition \ref{prop 4.4}, there are two possible choices for $(\epsilon_0,\epsilon_1)$ for $h_E$.
If $(\epsilon_0,\epsilon_1)=(1,-1)$, then
\[
h_E=h_F\otimes p^\ast h_P,
\]
whereas if $(\epsilon_0,\epsilon_1)=(-1,1)$, then
\[
h_E=h_F\otimes p^\ast(-h_P).
\]

\section*{Appendix}
\subsection*{A. Proof of Lemma \ref{lem 1.1}}\label{sec A}
In this section, we give the proof of Lemma \ref{lem 1.1}.
\renewcommand{\thelem}{1.1}
\begin{lem}
Let $(M,g)$ and $\omega_M$ be as in Section \ref{sec 1.3}.
Then the following decompositions hold.
\begin{align*}
&A^+(M)=A^{2,0}(M)\oplus A^{0,2}(M)\oplus\mathbb{C}\omega_M,\\
&A^-(M)=A^{1,1}(M)\cap \mathrm{Ker}(\Lambda_{\omega_M}).
\end{align*}
\end{lem}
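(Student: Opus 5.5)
The plan is a pointwise linear-algebra argument. Both sides of each decomposition are $C^\infty(M)$-modules cut out by pointwise conditions; here $\mathbb{C}\omega_M$ is read as $C^\infty(M,\mathbb{C})\,\omega_M$. So it suffices to fix $x\in M$ and prove the corresponding decomposition of $\Lambda^2T^*_xM\otimes\mathbb{C}$. At $x$ we choose an orthonormal coframe adapted to the complex structure $I$: $e^1,e^2=I^*e^1,e^3,e^4=I^*e^3$. Then $dz_1=e^1+\sqrt{-1}e^2$ and $dz_2=e^3+\sqrt{-1}e^4$ give a unitary coframe, $\omega_M=e^{12}+e^{34}$, and the orientation is $e^{1234}=\frac{\omega_M^2}{2}$, the complex orientation.

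Next we write the standard bases of $\Lambda^\pm$ with respect to $\ast_g$:
\[
\Lambda^+=\langle e^{12}+e^{34},\ e^{13}-e^{24},\ e^{14}+e^{23}\rangle,\qquad
\Lambda^-=\langle e^{12}-e^{34},\ e^{13}+e^{24},\ e^{14}-e^{23}\rangle .
\]
We then compute directly:
\[
dz_1\wedge dz_2=(e^{13}-e^{24})+\sqrt{-1}(e^{14}+e^{23}),
\]
and its conjugate spans $\Lambda^{0,2}$. Together with $\omega_M$, these three forms span the complexification of $\Lambda^+$, which proves the first decomposition.

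For the second decomposition we expand $dz_1\wedge d\bar z_2$ and its conjugate in the same way. Both turn out to be combinations of $e^{13}+e^{24}$ and $e^{14}-e^{23}$. Also
\[
\tfrac{\sqrt{-1}}{2}\bigl(dz_1\wedge d\bar z_1-dz_2\wedge d\bar z_2\bigr)=e^{12}-e^{34}.
\]
Hence $\Lambda^-\otimes\mathbb{C}\subset\Lambda^{1,1}$. Moreover each of these three forms is pointwise orthogonal to $\omega_M$, so it lies in $\mathrm{Ker}\,\Lambda_{\omega_M}$, since $\Lambda_{\omega_M}$ is the adjoint of $\omega_M\wedge\cdot$ and on $2$-forms is a multiple of $\langle\cdot,\omega_M\rangle$. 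For the reverse inclusion we count dimensions: $\Lambda^{1,1}$ has complex dimension $4$ and $\Lambda_{\omega_M}(\omega_M)=2\neq0$, so the primitive part has dimension $3=\dim_{\mathbb{C}}\Lambda^-\otimes\mathbb{C}$. Therefore the inclusion is an equality.

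There is a coordinate-free alternative, $\alpha\wedge\beta=\langle\alpha,\bar\beta\rangle\,\mathrm{vol}$ together with $\ast\alpha\wedge\bar\alpha$ positivity, but the frame computation is more direct and is the route I would take.

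The only real obstacle is bookkeeping. The orientation convention for $\ast_g$ must be the one induced by $I$; with the opposite orientation the roles of $\Lambda^\pm$ swap. I would fix this first through $\frac{\omega_M^2}{2}=e^{1234}$ and check the sign of $\ast e^{12}=e^{34}$, after which the remaining verifications are mechanical.
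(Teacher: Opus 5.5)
Your proposal is correct and follows essentially the same route as the paper: a pointwise computation in an orthonormal coframe adapted to $I$, identifying $\Lambda^{2,0}\oplus\Lambda^{0,2}\oplus\mathbb{C}\omega_M$ with the self-dual forms and showing the anti-self-dual forms are $(1,1)$, followed by the dimension count based on $\Lambda_{\omega_M}\omega_M=2$. The only small difference is how primitivity of the anti-self-dual forms is checked: you use their pointwise orthogonality to $\omega_M$ via adjointness of $\Lambda_{\omega_M}$, whereas the paper uses $\Lambda_{\omega_M}=\ast^{-1}L_{\omega_M}\ast$; both checks are immediate.
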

\bpr
It is enough to prove the decomposition pointwise.
Let $p\in M$.
Let $e_1,e_2,e_3,e_4$ be an orthonormal basis of $T_pM$ with respect to $g$ such that
\[
Ie_1=e_2,\qquad Ie_3=e_4.
\]
We denote by $e^1,e^2,e^3,e^4$ the dual basis of $T_p^\ast M$.
Then
\[
\omega_M=e^1\wedge e^2+e^3\wedge e^4.
\]
With respect to the orientation induced by the K\"ahler structure, we have
\begin{align*}
\ast_g(e^1\wedge e^2)&=e^3\wedge e^4,\\
\ast_g(e^1\wedge e^3)&=-e^2\wedge e^4,\\
\ast_g(e^1\wedge e^4)&=e^2\wedge e^3.
\end{align*}
Hence,
\begin{align*}
A_p^+(M)
=\operatorname{Span}_{\mathbb C}\{&
e^1\wedge e^2+e^3\wedge e^4,\,
e^1\wedge e^3-e^2\wedge e^4,\,
e^1\wedge e^4+e^2\wedge e^3
\},
\end{align*}
and
\begin{align*}
A_p^-(M)
=\operatorname{Span}_{\mathbb C}\{&
e^1\wedge e^2-e^3\wedge e^4,\,
e^1\wedge e^3+e^2\wedge e^4,\,
e^1\wedge e^4-e^2\wedge e^3
\}.
\end{align*}
Here $A_p^+(M), A^-_p(M)$ are the fibers of $A^+(M), A^-(M)$ on $p$, respectively.
Set
\[
\theta^1=e^1+\sqrt{-1}e^2,\qquad
\theta^2=e^3+\sqrt{-1}e^4.
\]
Then $\theta^1,\theta^2$ form a basis of $T_p^{1,0\ast}M$.
 We have
\begin{align*}
\theta^1\wedge\theta^2
&=
e^1\wedge e^3-e^2\wedge e^4
+\sqrt{-1}(e^1\wedge e^4+e^2\wedge e^3),\\
\overline{\theta^1}\wedge\overline{\theta^2}
&=
e^1\wedge e^3-e^2\wedge e^4
-\sqrt{-1}(e^1\wedge e^4+e^2\wedge e^3).
\end{align*}
Thus,
\[
A_p^{2,0}(M)\oplus A_p^{0,2}(M)
=
\operatorname{Span}_{\mathbb C}
\left\{
e^1\wedge e^3-e^2\wedge e^4,\,
e^1\wedge e^4+e^2\wedge e^3
\right\}.
\]
Since
\[
\omega_M=e^1\wedge e^2+e^3\wedge e^4,
\]
we obtain
\[
A_p^+(M)
=
A_p^{2,0}(M)\oplus A_p^{0,2}(M)\oplus\mathbb C\omega_M.
\]

On the other hand,
\begin{align*}
\theta^1\wedge\overline{\theta^1}
&=-2\sqrt{-1}\,e^1\wedge e^2,\\
\theta^2\wedge\overline{\theta^2}
&=-2\sqrt{-1}\,e^3\wedge e^4,
\end{align*}
and
\begin{align*}
\theta^1\wedge\overline{\theta^2}&=e^1\wedge e^3+e^2\wedge e^4+\sqrt{-1}(-e^1\wedge e^4+e^2\wedge e^3),\\
\overline{\theta^1}\wedge\theta^2&=e^1\wedge e^3+e^2\wedge e^4+\sqrt{-1}(e^1\wedge e^4-e^2\wedge e^3).
\end{align*}
Therefore, $A_p^-(M)$ is contained in $A_p^{1,1}(M)$. 
We next show that 
\[
A_p^-(M)
\subset
A_p^{1,1}(M)\cap\operatorname{Ker}(\Lambda_{\omega_M}).
\]
From \cite{Huy}, we have
\[
\Lambda_{\omega_M}=\ast^{-1}L_{\omega_M}\ast
\]
where
\[
L_{\omega_M}(\alpha)=\omega_M\wedge \alpha.
\]
Therefore
\begin{align*}
\Lambda_{\omega_M}
(e^1\wedge e^2-e^3\wedge e^4)&=\ast^{-1}_gL_{\omega_M}(e^3\wedge e^4-e^1\wedge e^2)\\
&=\ast^{-1}_g(e^1\wedge e^2\wedge e^3\wedge e^4-e^3\wedge e^4\wedge e^1\wedge e^2)\\
&=0.
\end{align*}
We also have
\[
\Lambda_{\omega_M}
(e^1\wedge e^3+e^2\wedge e^4)
=
\Lambda_{\omega_M}
(e^1\wedge e^4-e^2\wedge e^3)
=0.
\]
Hence
\[
A_p^-(M)
\subset
A_p^{1,1}(M)\cap\operatorname{Ker}(\Lambda_{\omega_M}).
\]
Since
\[
\Lambda_{\omega_M}(\omega_M)=2,
\]
the map
\[
\Lambda_{\omega_M}:A^{1,1}_p(M)\to \mathbb{C}
\]
is surjective.
The dimension of $A^{1,1}_p(M)$ is four since it is spanned by $\theta^1\wedge\overline{\theta^1},\theta^2\wedge\overline{\theta^2},\theta^1\wedge\overline{\theta^2}$, and 
$\theta^2\wedge\overline{\theta^1}$.
Hence 
\[
\dim_{\mathbb{C}}\big(\operatorname{Ker}(\Lambda_{\omega_M})\cap A^{1,1}_p(M)\big)=3
\]
and since the dimension of $A^-(M)$ is three, we have
\[
A_p^-(M)
=
A_p^{1,1}(M)\cap\operatorname{Ker}(\Lambda_{\omega_M}).
\]
Since $p\in M$ was arbitrary, the desired decompositions follow.
\epr

\end{document}